\documentclass[11pt,a4paper]{amsart}

\usepackage[left=2.8cm, top=3cm,bottom=3cm,right=2.8cm]{geometry}

\usepackage{mathtools,amssymb,amsthm,mathrsfs,calc,graphicx,stmaryrd,xcolor,dsfont,pgfplots,bbm,float,array,epsfig,color}
\usepackage[numbers,square]{natbib}
\usepackage[british]{babel}
\usepackage{amsfonts,amsmath,amsthm,amssymb}
\usepackage{verbatim}
\usepackage{multirow}
\usepackage{tikz}
\usepackage{bbm}
\usepackage{enumitem}
\usepackage{subcaption}

\pdfminorversion=4

\numberwithin{equation}{section}
\numberwithin{figure}{section}

\newtheorem{thm}{Theorem}[section]
\newtheorem{lem}[thm]{Lemma}

\newtheorem{prop}[thm]{Proposition}

\theoremstyle{remark}
\newtheorem{rem}[thm]{Remark}

\theoremstyle{definition}

\numberwithin{equation}{section}

\def\P{{\rm P}}
\def\Z{{\mathbb{Z}}}

\def\R{{\mathbb{R}}}
\def\V{{\mathbb{V}}}
\def\F{{\mathcal{F}}}

\newcommand{\od}{\overset{{\rm d}}{=}}
\newcommand{\tod}{\overset{{\rm d}}{\longrightarrow}}
\newcommand{\topr}{\overset{\mathbb{P}}{\longrightarrow}}

\newcommand{\PP}{\mathbb{P}}
\newcommand{\E}{{\rm E}}
\newcommand{\EE}{\mathbb{E}}
\newcommand{\Y}{\mathbb{Y}}
\newcommand{\G}{\mathcal{G}}
\newcommand{\X}{\mathcal{X}}

\newcommand{\Po}{{\rm P}_{\omega}}
\newcommand{\Eo}{{\rm E}_{\omega}}

\newcommand{\N}{\mathbb{N}}

\newcommand{\var}{{\rm Var}}
\newcommand{\varo}{\var_\omega}

\newcommand{\Rb}{{\bar{R}}}
\newcommand{\U}{\mathbb{U}}

\newcommand{\eps}{\varepsilon}
\newcommand{\wt}{\widetilde}
\DeclareMathOperator{\1}{\mathbbm{1}}

\begin{document}

\title[Favourite sites of a~random walk in a~sparse random environment]{On favourite sites of a~random walk in moderately sparse random environment}
\author[A. Ko{\l}odziejska]{Alicja Ko{\l}odziejska}
\thanks{Mathematical
		Institute, University of Wroc{\l}aw, Pl.~Grunwaldzki 2, 50-384 Wroc{\l}aw, Poland. E-mail:
		{\tt alicja.kolodziejska@math.uni.wroc.pl}}
\begin{abstract}
	We study the favourite sites of a~random walk evolving in a~sparse random environment on the set of integers. The walker moves symmetrically apart from some randomly chosen sites where we impose random drift. We prove annealed limit theorems for the time the walk spends in its favourite sites in two cases. The first one, in which it is the distribution of the drift that determines the limiting behaviour of the walk, is a~generalization of known results for a~random walk in i.i.d.\ random environment. In the second case a~new behaviour appears, caused by the sparsity of the environment.
\end{abstract}
\date{}
\maketitle
{\footnotesize \noindent \textbf{Keywords:} {random walk in random environment, branching process in random environment, sparse random environment, local times.} \\
\textbf{MSC2020 subject classifications:} primary: 60K37; secondary: 60F05.}
\thispagestyle{empty}

\section{Introduction}\label{sec:intro}

One of the most classic and well studied stochastic processes is a~simple symmetric random walk on the set of integers, which models the~movement of a~single particle in one-dimensional, homogeneous medium. The simplicity of the model allows to analyse it with the help of such classic results as the strong law of large numbers or the central limit theorem; however, its homogeneity is not always desired. In many applications one would like to consider some obstacles or impurities of the medium, possibly placed randomly, that would have impact on the movement of the particle. One of the ways of defining such random environment was proposed by Solomon in the seventies \cite{solomon1975random}. In his model, called {\it a~random walk in a~random environment} (RWRE), one first samples the environment by putting random drift independently at every integer, and then the particle moves in such inhomogeneous, random medium. It soon transpired that this additional noise leads to behaviour not observed in the deterministic environment. Various authors described how the distribution of the environment determines such properties of the walk as its transience and asymptotic speed \cite{solomon1975random, alili1999asymptotic}, limit theorems \cite{goldsheid2006simple, kesten1975limit}, or large deviations \cite{dembo1996tail, buraczewski2018precise}. In particular, under suitable distribution of the drift, the walk may be transient, but with sub-linear speed, and no longer satisfy the central limit theorem \cite{kesten1975limit}. This new behaviour is caused, heuristically speaking, by the traps occurring in the environment, i.e.\ sites with unfavourable drift; the particle is forced to make many attempts to cross such a~site and this fact has significant impact on the limiting behaviour of the walk.

The model studied in this article was introduced by Matzavinos, Roitershtein, and Seol in~\cite{matzavinos:2016:random} and is called {\it a~random walk in~a sparse random environment} (RWSRE). The aim is to consider an environment in which the impurities appear not at every site, as it is the case in the RWRE, but are put sparsely on the set of integers. To this end, the environment is sampled by marking some sites by a~two-sided renewal process and putting random drifts only in the marked points. In the unmarked sites the movement of the particle is symmetric. Therefore the RWSRE may be seen as an interpolation between the simple symmetric random walk and the RWRE, and one may expect that, depending on the distribution of the environment, it should manifest properties resembling one or the other. Indeed, this dichotomy was already observed in \cite{buraczewski:2020:random, buraczewski:2019:random, buraczewski:2024:weak} in the context of limit theorems for the position of the walk and the sequence of first passage times. Under suitable assumptions on the distribution of the environment, it is the drift that has major impact on the movement of the particle and the limit theorems resemble results known for the RWRE. However, under different assumptions, which favour long distances between marked points, in most of the sites the walk behaves like a~simple symmetric random walk and this change is visible in the macroscopic scale of the limit theorems.

\begin{figure}[h!]\label{fig:srws}
	\centering
	\begin{subfigure}{.46\textwidth}
		\centering
		\includegraphics[width=\linewidth]{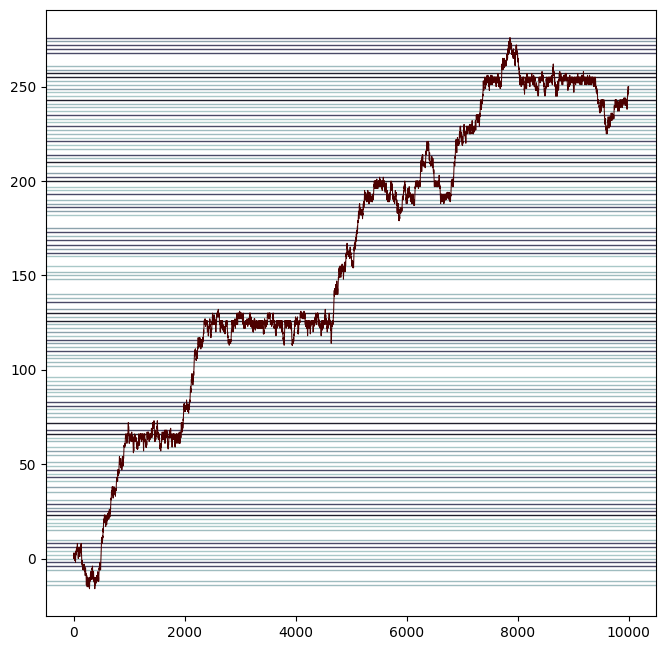}
		\caption{\footnotesize The case of dominating drift: the particle spends most of its time trying to cross sites with unfavourable drift.}
	\end{subfigure}
	\begin{subfigure}{.46\textwidth}
		\centering
		\includegraphics[width=\linewidth]{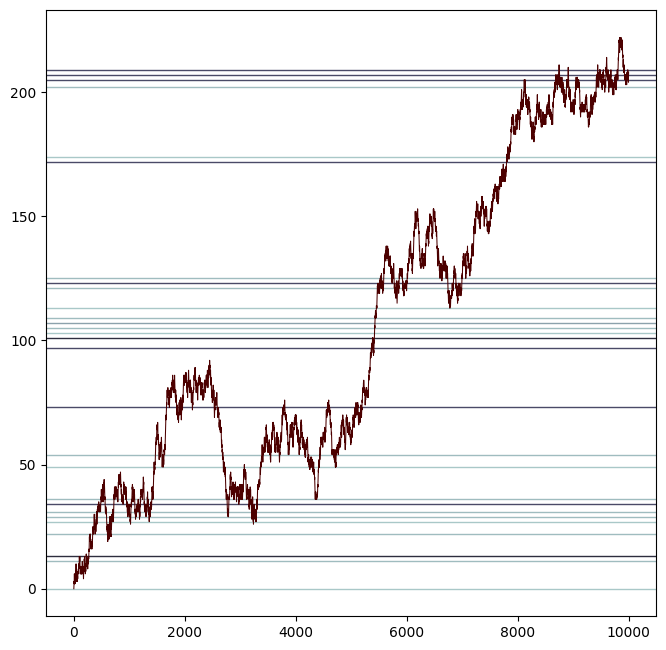}
		\caption{\footnotesize The case of dominating sparsity: in most of the sites the particle performs a~simple symmetric random walk.}
	\end{subfigure}
	\caption{Exemplary trajectories of a~transient RWSRE. Horizontal lines indicate marked sites; the darker the line, the stronger the drift to $-\infty$.}
\end{figure}

The aim of this article is to study the sequence of maximal local times, i.e.\ the amount of time spent by the particle in its favourite sites, in the case of the transient walk in a~sparse random environment. We prove annealed limit theorems for this sequence under two sets of assumptions. In the first case it is the drift that drives the limiting behaviour of the walk, and our results may be seen as a~generalization of those obtained by Dolgopyat and Goldsheid in \cite[Theorem 4]{dolgopyat2010quenched} for the RWRE. However, the techniques used in \cite{dolgopyat2010quenched} were different from those presented here. In this article we follow the method proposed by Kesten et al.\ in \cite{kesten1975limit} when examining the hitting times, that is we rephrase the question posed for the walk into the setting of the associated branching process. This method proves useful both in the case of dominating drift and the complementary case, in which the sparsity of the environment plays the dominant role in determining the limiting behaviour of the walk.

The article is organized as follows: in the remaining part of this section we define the examined model formally. Statement of our main results is given in Section~\ref{sec:local:thms}. Section~\ref{sec:branching} introduces the branching process associated with the walk and presents some of its properties. The proofs of the main theorems are given in Sections \ref{sec:thmA} and \ref{sec:thmB}.

\subsection{Random walk in a~sparse random environment}

Throughout the paper, $\N$ denotes the set of natural numbers (including $0$), $\N_+$ denotes the set of positive natural numbers, and $\Z$ denotes the set of integers. Let $\Omega=(0,1)^\Z$ and let $\F$ be the corresponding cylindrical $\sigma$-algebra. A random element $\omega=(\omega_n)_{n\in\Z}$ of $(\Omega, \F)$ distributed according to a~probability measure $\P$ is called a~{\it random environment}. Let $\X = \Z^{\N}$ be the set of possible paths of a~random walk on $\Z$, with corresponding cylindrical $\sigma$-algebra $\G$. Then any $\omega \in \Omega$ and $i \in \Z$ give rise to a~measure $\Po^i$ on $\X$ such that $\Po^i[X_0=i]=1$ and
\begin{equation}\label{def:Po}
	\Po^i \left[X_{n+1}=j| X_n = k \right] =
	\begin{cases}
		\omega_k \quad & \textnormal{if } j = k+1, \\
		1 - \omega_k & \textnormal{if } j = k-1, \\
		0 & \textnormal{otherwise,}
	\end{cases}
\end{equation}
where $X = (X_n)_{n \in \N} \in \mathcal{X}$. That is, under $\Po^i$, $X$ is a~nearest-neighbour random walk starting from $i$ with transition probabilities given by the sequence $\omega$. In particular, it is a~time-homogeneous Markov chain.

Since the environment itself is random, it is natural to consider a~measure $\PP^i$ on $(\Omega\times \X, \F \otimes \G)$ such that
\begin{equation}\label{def:PP}
	\PP^i \left[F \times G\right] = \int_{F} \Po^i[G] \, \P(d\omega)
\end{equation}
for any $F \in \F, G \in \G$. We write $\Po = \Po^0$ and $\PP = \PP^0$. Observe that under $\PP$ the walk $X$ may exhibit long-time dependencies and thus no longer be a~Markov chain.

The process $X$ defined above is called a~{\it random walk in a~random environment} and was introduced by Solomon \cite{solomon1975random}. A~well studied case is $\omega$ being an i.i.d.\ sequence, which gives rise to a~{\it random walk in i.i.d.\ random environment}.

We will consider a~specific choice of environment that was introduced by Matzavinos, Roitershtein, and Seol in \cite{matzavinos:2016:random}. Consider an i.i.d.\ sequence $((\xi_k,\lambda_k))_{k\in\Z} \in (\N_+ \times [0,1])^\Z$ and define, for any $n,k \in \Z$,
\begin{equation}\label{def:RWSRE}
	S_n = \begin{cases}
		\sum_{j=1}^n \xi_j, \; &n > 0, \\
		0, & n=0, \\
		-\sum_{j={n+1}}^0 \xi_j, & n < 0;
	\end{cases}
	\qquad
	\omega_k = \begin{cases}
		\lambda_{n+1} \: &\textnormal{ if } k = S_n \textnormal{ for some } n\in\Z, \\
		1/2  & \textnormal{ otherwise.}
	\end{cases}
\end{equation}
The random walk evolving in an environment $\omega$ defined by \eqref{def:RWSRE} is called a~{\it random walk in a~sparse random environment}. We refer to the random sites $S_n$ as {\it marked points} and write $(\xi, \lambda)$ for a~generic element of the sequence $((\xi_k,\lambda_k))_{k\in\Z}$. The environment is called {\it moderately sparse} if $\E\xi < \infty$ and {\it strongly sparse} otherwise.

Observe that if $\xi = 1$ almost surely, then we obtain once again a~random walk in i.i.d.\ environment. Otherwise the environment is split into blocks of lengths given by the sequence $(\xi_k)_{k\in\Z}$; within every block the particle performs a~symmetric walk, while the random drift occurs at the endpoints of blocks. Therefore the RWSRE model may be seen as an interpolation between a~simple symmetric random walk and a~walk in i.i.d.\ environment, or as a~generalization of the latter.

We should remark that the model we consider here is slightly different from that defined originally in \cite{matzavinos:2016:random}. That is, due to \eqref{def:RWSRE}, we allow for dependence between the length of the block between marked sites and the drift at its left end, while originally the dependence was allowed for the drift at the right end. This change of convention arises naturally from time reversal coming with the associated branching process which we introduce in Section~\ref{sec:branching}, and appeared also in \cite{buraczewski:2020:random, buraczewski:2019:random}, where annealed limit theorems for the position of the walk were proved.

\medskip

For $k\in\Z$, let
\begin{equation*}
	\rho_k = \frac{1-\lambda_k}{\lambda_k}.
\end{equation*}
The variables $(\rho_k)_{k\in\Z}$, which quantify the drift in the environment, appear naturally when examining the properties of the walk. In particular, as was shown in \cite[Theorem 3.1]{matzavinos:2016:random}, if
\begin{equation}\label{eq:transience}
	\E\log\xi < \infty, \quad \E\log\rho < 0,
\end{equation}
then RWSRE is transient to $+\infty$, $\PP$-almost surely. From now on we will assume that conditions \eqref{eq:transience} are satisfied. Observe that this assumption excludes the case $\P[\rho=1] = 1$, in which $X$ is a~simple symmetric random walk. The case $\P[\rho=0]=1$, in which each marked point is a~barrier that cannot be crossed from the right, is not excluded.

\begin{rem}\label{rem:r}	
	Consider a~function $r : [0,\infty) \to [0,\infty]$ given by 
	\begin{equation}\label{def:r}
		r(x) = \E\rho^x
	\end{equation}
	and assume that $r$ is finite on some interval $[0,\eps]$, $\eps>0$. We will repeatedly make use of the following observation: under conditions \eqref{eq:transience}, either $\P[\rho=0]=1$ and $r(x) = 0$ for all $x>0$, or $r$ is strictly convex on $[0,\eps]$. Moreover, $r'(0) = \E\log\rho < 0$. In particular, there exists at most one $\alpha > 0$ such that $\E\rho^\alpha = 1$; if it exists, then $\E\rho^\gamma < 1$ for $\gamma \in (0,\alpha)$ and $\E\rho^\gamma > 1$ for $\gamma > \alpha$.
\end{rem}
\section{Annealed limit theorems for maximal local time}\label{sec:local:thms}

Consider a~sequence of hitting times
\begin{equation}\label{def:hittingTime}
	T_n = \inf\{k \geq 0: X_k = n \}
\end{equation}
and let, for $k\leq n$,
\begin{equation}\label{def:localTime}
	L_k(n) = |\{m \leq T_n \, : \, X_m = k\}|
\end{equation}
be the {\it local time at $k$}, i.e.\ the number of times the walk visits $k$ before reaching $n$. Our object of interest is the limiting behaviour of maximal local time, that is the variable $\max_{k \leq n} L_k(n)$, as $n \to \infty$. We will present two cases in which an annealed limit theorem holds for this sequence of variables, with Fr\'echet distribution in the limit.

Before the strict formulation of our assumptions let us give a~brief description of the two cases. Roughly speaking, there are two ways in which the maximal local time can be obtained. Either it consists of accumulated visits to some point made during many excursions from a~site with strong unfavourable drift, or is obtained during the time RWSRE behaves like a~symmetric walk, i.e.\ during a~crossing of a~block between marked points. In the first case the amount of time spent in the favourite point should be controlled by the strength of the drift, while in the second case it should depend on the length of the block. Whether the first or the second scenario prevails depends on the joint distribution of $\rho$ and $\xi$, that is on the interplay between the drift and the sparsity.

Recall that we assume \eqref{eq:transience} to hold, so that the walk is transient to $+\infty$. Additionally, we consider the following sets of assumptions:

\medskip
{\bf Assumptions $(A)$:} For some $\alpha \in (0,2)$,
\begin{itemize}
	\setlength\itemsep{0em}
	\item $\E \rho^\alpha = 1$;
	\item $\E \rho^\alpha \log^+\rho < \infty$;
	\item the distribution of $\log\rho$ is non-arithmetic;
	\item $\E \xi^{(\alpha+\delta)\vee 1} < \infty$ for some $\delta > 0$;
	\item $\E \xi^{\alpha} \rho^\alpha < \infty$.
\end{itemize}
\medskip
Recall that a~distribution is non-arithmetic if it is not concentrated on any lattice $c \Z$, $c>0$. Without loss of generality we assume that $\alpha + \delta \leq 2$. In this case the limiting behaviour of maxima is determined mostly by the parameter $\alpha$, that is by properties of $\rho$; it is a~generalization of the result known for the walk in i.i.d.\ environment \cite[Theorem 4]{dolgopyat2010quenched}. We will prove the following:
\begin{thm}\label{thm:A}
	Under assumptions $(A)$, there is a~constant $c_\alpha > 0$ such that for all $x>0$,
	\begin{equation*}
		\lim_{n\to\infty}\PP\left[ \frac{\max_{k \leq n} L_k(n)}{n^{1/\alpha}} > x \right] = 1 - e^{-c_\alpha x^{-\alpha}}. 
	\end{equation*}
\end{thm}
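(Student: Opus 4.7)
The strategy is to recast the question about the local times of the walk into one about the maximum of an associated branching process in random environment (BPRE), in the spirit of Kesten, Kozlov and Spitzer~\cite{kesten1975limit}; the BPRE in question is the one to be introduced in Section~\ref{sec:branching}. For $0 \leq k \leq n-1$, let $D_k^{(n)}$ denote the number of down-steps from $k$ to $k-1$ performed by the walk before time $T_n$. A standard excursion decomposition shows that $(D_{n-j}^{(n)})_{j\geq 1}$, read from $n$ downward, is a BPRE whose offspring distribution in generation $j$ is geometric with parameter $\omega_{n-j}$ and mean $\rho_{n-j}$. Counting the steps into and out of $k$ gives the identity $L_k(n) = D_k^{(n)} + D_{k+1}^{(n)} + \1_{0 \leq k < n}$, so it suffices to understand $\max_{0\leq k < n} D_k^{(n)}$: the additive $+1$ is negligible, while the two $D$-terms differ by a factor of order $\rho_k$, which is typically large at ``trap'' sites where the maximum is attained.

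The second step is to identify the tail of a single $D_k^{(n)}$ when both $k$ and $n-k$ are large. Iterating the branching recursion represents $D_k^{(n)}$ as a finite-time, finite-initial-condition approximation of the stationary perpetuity
\[
R \;=\; \sum_{j\geq 0} \Big(\prod_{i=1}^{j} \rho_{i}\Big)\, \eta_j,
\]
where $\eta_j$ collects the i.i.d.\ contribution of one ``block'' between consecutive marked points (symmetric excursions inside the block plus the branching step at the next marked point). Assumptions $(A)$ are calibrated so that the Kesten-Goldie implicit renewal theorem applies to $R$: $\E\rho^\alpha = 1$ together with $\E\rho^\alpha\log^+\rho < \infty$ and non-arithmeticity of $\log\rho$ give the multiplicative side, while the moment conditions $\E \xi^{(\alpha+\delta)\vee 1}<\infty$ and $\E \xi^\alpha \rho^\alpha<\infty$ ensure that the additive noise $\eta_j$ has a moment of order $\alpha+\delta$ and does not spoil the tail asymptotics in the sparse setting. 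The conclusion is $\PP[R > x] \sim C_\alpha x^{-\alpha}$ as $x\to\infty$, for some constant $C_\alpha>0$.

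To pass from the pointwise tail to the Fr\'echet limit for the maximum, the plan is to exploit the regeneration structure at the marked points $(S_k)$ and the geometric mixing of the BPRE to split $\{0,\ldots,n-1\}$ into mesoscopic blocks that are asymptotically independent. The maximum over any such block is governed by the tail of $R$, and the Poisson heuristic from classical extreme value theory for weakly dependent arrays with regularly varying marginal tails yields
\[
\PP\!\left[\max_{0 \leq k < n} D_k^{(n)} \leq x\, n^{1/\alpha}\right] \longrightarrow \exp\!\big(\!-\!C_\alpha x^{-\alpha}\big),
\]
which is equivalent to the statement of the theorem (with $c_\alpha = C_\alpha$ after absorbing the constant from the reduction $L_k(n) \sim D_k^{(n)}$).

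The principal obstacle lies in the simultaneous handling of three sources of difficulty: the non-stationarity of $D_k^{(n)}$ for $k$ close to $n$; the strong short-range dependence, since $D_k^{(n)}\approx \rho_k\, D_{k+1}^{(n)}$ and hence neighbouring indices are far from asymptotically independent (one must identify genuine clusters so that each ``trap'' contributes only once to the maximum); and the extra noise brought into the branching mechanism by potentially long symmetric blocks between marked points. The moment hypotheses in $(A)$ are calibrated to close all three issues, with $\E\xi^{(\alpha+\delta)\vee 1}<\infty$ in particular preventing a single long block from generating by plain symmetric-random-walk fluctuations a local time as large as $n^{1/\alpha}$, and thereby confirming that the limiting behaviour is driven by the drift statistic $\rho$.
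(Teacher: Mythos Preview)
Your overall orientation---pass to the associated branching process, use Kesten--Goldie tails, and invoke a Fr\'echet limit---matches the paper's, but the execution has two real gaps that the paper closes by a different route.

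First, the regeneration you invoke is the wrong one. The blocks between consecutive marked points $(S_k)$ are \emph{not} i.i.d.\ pieces of the branching process: the population $\Z_k$ at the start of block $k+1$ depends on the entire past. The paper instead regenerates at the \emph{extinction times} $\tau_n=\inf\{k>\tau_{n-1}:\Z_k=0\}$ of the marked-generation process, which do cut $Z$ into genuinely i.i.d.\ epochs; Lemma~\ref{lem:tau} gives $\EE e^{c\tau_1}<\infty$, so there are order $n$ epochs up to time $n$ and the Fr\'echet limit reduces to a tail estimate for the maximum over a \emph{single} epoch.

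Second, you propose to obtain that block-maximum tail from the tail of a \emph{single} $D_k^{(n)}\approx R$ via ``extreme value theory for weakly dependent arrays'', while conceding that neighbouring $D_k$'s satisfy $D_k\approx\rho_k D_{k+1}$ and hence cluster. This is exactly the step that cannot be left heuristic: you would need to identify the extremal index, and nothing in your outline does so. The paper bypasses the clustering issue entirely. Within one epoch it introduces $\sigma=\sigma(A)=\inf\{n:\Z_n\geq A\}$, shows (Lemmas~\ref{lem:beforesigma}--\ref{lem:aftersigma}) that neither the generations before $S_\sigma$ nor the descendants of immigrants after $S_\sigma$ contribute to the tail, and proves (Lemma~\ref{lem:Zsigmaprog}) that the maximum of the surviving line is $\Z_\sigma\cdot M_{\Psi,\sigma+1}+o(\Z_\sigma)$, where $M_\Psi=\max_{k}(\Psi_{1,k}+\Psi_{1,k+1})$ is the maximum of the \emph{potential}, not the perpetuity sum $\bar R$. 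The tail of $M_\Psi$ comes from perturbed-random-walk theory (Theorem~1.3.8 in \cite{iksanow2016renewal}), and Breiman's lemma then gives the product tail directly, with the constant $c_M=c_\Psi\lim_{A\to\infty}\EE[\Z_{\sigma(A)}^\alpha\1_{\sigma(A)<\tau_1}]$. In short: the paper never studies the marginal tail of a single generation and never appeals to an extremal index; it computes the tail of the epoch maximum in one shot via the $\sigma$--decomposition and the potential $M_\Psi$, which is the missing idea in your sketch.
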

\noindent As may be seen in the proof of Theorem \ref{thm:A}, the first three assumptions in $(A)$ secure the tail asymptotics of certain processes related to the drift in the environment (see Section~\ref{sec:estim} and Lemma~\ref{lem:maxPsi}), while the next two assure that the limiting behaviour of the local times is governed mostly by these processes. We should remark that the non-arithmeticity assumption is technical and we would expect similar statement in the arithmetic case. The crucial assumption in case $(A)$ is $\E\xi^{\alpha+\delta} < \infty$. Different behaviour appears when $\xi$ does not have high enough moments. Consider the following:

\medskip
{\bf Assumptions $(B)$: } For some $\beta \in [1,2)$,
\begin{itemize}
	\setlength\itemsep{0em}
	\item $\P[\xi > x] \sim x^{-\beta}\ell(x)$ for some slowly varying $\ell$;
	\item $\E\rho^{\beta+\delta} < 1$ for some $\delta > 0 $;
	\item $\xi$ and $\rho$ are independent;
	\item if $\beta = 1$, assume $\E\xi < \infty$.
\end{itemize}
\medskip
Here and throughout the article we write $f(x) \sim g(x)$ for two functions $f, g : \R \to \R$ whenever $f(x)/ g(x) \to 1$ as $x \to \infty$. Recall that a~function $\ell$ is slowly varying at infinity if $\ell(c x) \sim \ell(x)$ for any constant $c>0$. In case $(B)$ we may also assume that $\beta + \delta \leq 2$. Observe that we do not assume that there exists $\alpha>0$ such that $\E\rho^\alpha = 1$. However, if it does exist, then the convexity of $r$ defined in \eqref{def:r} implies that $\alpha > \beta$ and $\E\xi^\alpha = \infty$. Under assumptions $(B)$, the asymptotic behaviour of maximal local times is controlled mostly by the distribution of~$\xi$. Since $\xi$ has regularly varying tails, a~good scaling for maxima of $(\xi_n)_{n\in\N}$ is a~sequence $(a_n)_{n\in\N}$ such that
\begin{equation}\label{def:an}
	\lim_{n\to\infty} n \P[\xi > a_n] = 1.
\end{equation}
A typical choice would be $a_n = \inf\{x \geq 0  : \P[\xi > x] \leq 1/n\}$. The sequence $(a_n)_{n\in\N}$ is regularly varying with index $1/\beta$, i.e.
\begin{equation*}
	a_n = n^{1/\beta} \ell_1(n)
\end{equation*}
for some slowly varying function $\ell_1$ (see, for instance, p.\ 15 in \cite{resnick:2013:extreme}). It turns out that $(a_n)_{n\in\N}$ is also a~good scaling for maxima of $L$.
\begin{thm}\label{thm:B}
	Under assumptions $(B)$, there is a~constant $\wt{c}_\beta > 0$ such that for all $x>0$,
	\begin{equation*}
		\lim_{n\to\infty}\PP\left[ \frac{\max_{k \leq n} L_k(n)}{a_n} > x \right] = 1 - e^{-\wt{c}_\beta x^{-\beta}}. 
	\end{equation*}
\end{thm}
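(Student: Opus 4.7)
My approach is via the branching-process representation of Section~\ref{sec:branching}. The local times are tied to the associated branching process $(Z_m)$ by a flux identity of the form $L_k(n) = Z_{n-k} + Z_{n-k-1} + 1$ (with boundary conventions), and contributions from sites $k<0$ are $o_\PP(a_n)$ by transience. Thus $\max_{k\le n} L_k(n)$ and $\max_{0 \le m \le n} Z_m$ are comparable up to a bounded random factor, which absorbs into the constant $c_\beta$; it suffices to prove a Fr\'echet$(\beta)$ limit for $\max_{0 \le m \le n} Z_m/a_n$. The sparse environment splits $(Z_m)$ into two scales: within each inter-marked-site block the process performs a critical branching with geometric$(1/2)$ offspring (mean~$1$, variance~$2$), while at each marked site it undergoes one subcritical step of offspring mean $\rho$. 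Writing $\bar Z_j$ for the value of $Z$ at the $j$-th marked site traversed backwards from $n$, the sequence $(\bar Z_j)$ is a Markov chain whose one-step transition composes a critical branching over $\xi$ generations with a subcritical geometric step.

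Under $(B)$, the contractivity $\E\rho^{\beta+\delta}<1$ and the independence of $\xi$ and $\rho$ produce a unique stationary law $\pi$ with $\int z^{\beta+\delta}\,\pi(\mathrm{d}z)<\infty$ and geometric ergodicity of $(\bar Z_j)$. The crucial quantitative step is to establish the marginal tail $\pi([y,\infty)) \sim C_\pi\, y^{-\beta}\ell(y)$ as $y\to\infty$, inherited from the heavy tail of $\xi$. Heuristically, the dominant route to a large $\bar Z_{j+1}$ is a ``lucky survival'' in the preceding block of length $\xi_{j+1}$: starting from a moderate population, a critical geometric$(1/2)$ branching survives $\xi$ generations with probability of order $1/\xi$, and conditioned on survival the end-of-block population is of order $\xi V$ for a positive $O(1)$ (Yaglom) variable $V$. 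After the subcritical step at the next marked site this gives $\bar Z_{j+1}$ of order $\rho\,\xi\,V$, whose tail inherits that of $\xi$ via a Breiman lemma ($\E\rho^{\beta+\delta}<\infty$ and $V$ has all moments). A Kesten--Goldie-style renewal argument along the subcritical recursion then transfers this single-jump tail to the stationary tail of $\pi$ with an explicit constant~$C_\pi$.

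Separately, I would verify that the maximum of $Z$ \emph{inside} a single block does not reach the scale $a_n$: for a moderate starting population $\bar Z$ and block length $\xi$, the critical branching reaches height $y$ on the event $\{\xi\ge y\}$ with probability $\lesssim \bar Z/y$, so this contribution has tail of order $y^{-\beta-1}$ per block and overall gives at most $O_\PP(n^{1/(\beta+1)}) = o_\PP(a_n)$. Hence $\max_m Z_m = \max_j \bar Z_j + o_\PP(a_n)$, reducing the theorem to an extreme-value statement for the stationary regularly-varying chain $(\bar Z_j)$.

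Classical extreme-value theory for weakly dependent stationary sequences then completes the proof: geometric ergodicity supplies Leadbetter-type mixing, and combined with the renewal LLN $\nu_n/n\toas 1/\E\xi$ (where $\nu_n$ is the number of marked sites in $[0,n]$, using $\E\xi<\infty$) and $a_{\nu_n}/a_n\to (\E\xi)^{-1/\beta}$ one obtains $\max_{j\le\nu_n}\bar Z_j/a_n \tod \text{Fr\'echet}(\beta)$ with scale $c_\beta$ combining $C_\pi$, the extremal index of $(\bar Z_j)$, the factor $\E\xi$, and the factor from the flux identity. The main obstacle I anticipate is the rigorous derivation of the tail asymptotic for $\pi$: making the lucky-survival heuristic precise requires uniform Yaglom-type estimates for the critical branching across the range of starting populations appearing in stationarity, together with a careful isolation of the ``last big jump'' in the subcritical recursion. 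A secondary difficulty is quantifying the mixing of $(\bar Z_j)$ sharply enough to identify the extremal index entering $c_\beta$.
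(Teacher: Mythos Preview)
Your proposal has a genuine gap that makes the argument fail. The associated branching process $Z$ carries \emph{unit immigration at every generation}, not only at marked sites; your description of the one-step transition of $(\bar Z_j)$ as ``a critical branching over $\xi$ generations followed by a subcritical geometric step'' omits this immigration, and the ``lucky survival'' heuristic (survival probability $\sim 1/\xi$, Yaglom-type conditional size $\xi V$) is the picture for a critical process \emph{without} immigration. With immigration, the population at the end of a block of length $\xi$ is $Geo(1/\xi)$ under $\Po$ (see the proof of Lemma~\ref{lem:maxGW}) and hence is of order $\xi$ with probability bounded away from~$0$; no rare event is needed.

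This error is fatal for the reduction ``$\max_m Z_m = \max_j \bar Z_j + o_\PP(a_n)$''. In a block with $\xi_k$ of order $a_n$ --- and there are $O_\PP(1)$ such blocks among the first $n$ by~\eqref{eq:PPP} --- the process $Y^k$ counting the progeny of that block's immigrants satisfies, by the Ray--Knight theorem,
\[
\frac{1}{N}\max_{j<N}\bigl(Y^{(N)}_j+Y^{(N)}_{j+1}\bigr)\ \tod\ M_B,
\]
the supremum of a squared Bessel process on $[0,1]$ (Lemma~\ref{lem:oneblockmax}). Thus the within-block maximum is itself of order $\xi_k$, hence of order $a_n$, and its per-block tail is of order $y^{-\beta}\ell(y)$, not $y^{-\beta-1}$ as you claim. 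Under~$(B)$ the within-block maximum is in fact the \emph{principal} contribution to $\max_m Z_m$; reducing to the marked-site chain $(\bar Z_j)$ discards it and would at best produce the wrong constant.

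The paper proceeds quite differently. It separates blocks into ``large'' ($\xi_k>\eps a_n$) and ``small'' ones; Lemma~\ref{lem:immmax} combined with a truncated-moment bound shows that the entire progeny of immigrants from small blocks is negligible on scale $a_n$ (Proposition~\ref{prop:smalltr}). For the finitely many large blocks, Lemma~\ref{lem:oneblockmax} identifies the scaled maximum of each $Y^k$ as a copy of $M_\infty=\max(M_B,\,B(1)M_\Psi/2)$, and since with high probability large blocks are far enough apart that their progenies do not overlap, the Poisson convergence~\eqref{eq:PPP} turns $\max_{k\in I_{n,\eps}} \xi_k M_\infty^{(k)}/a_n$ directly into the Fr\'echet$(\beta)$ limit (Proposition~\ref{prop:largetr}). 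No stationary-tail computation for $(\bar Z_j)$, no Leadbetter mixing, and no extremal-index identification are needed.
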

The exact forms of constants $c_\alpha, \wt{c}_\beta$ will be given during the proofs.

\medskip

As was mentioned in the introduction, the dichotomy of {\it dominating drift} in case $(A)$ and {\it dominating sparsity} in case $(B)$ was already observed in the limit theorems for first passage times and the position of the walk. Interestingly enough, for the maximal local time the passage from one regime to another occurs under different conditions (see \cite[Theorems~2.2 and~2.6]{buraczewski:2019:random}). That is, assuming that both parameters $\alpha, \beta$ given by the first conditions of cases $(A)$ and $(B)$ exist, the ``critical phase'' for the local times is $\alpha = \beta$, while the change of regimes for the hitting times (and thus also the position of the walk) occurs under $2 \alpha = \beta$. The reason, as may be seen from the arguments used in both cases, is the fact that the first passage times and local times of a~simple symmetric random walk are asymptotically of different orders, while for a~transient RWRE they are comparable.

\section{Auxiliary results}\label{sec:branching}
Instead of examining the local times explicitly, we pass to a~branching process associated with RWSRE. In this section we describe the construction of this process and prove auxiliary lemmas which we will use in both examined cases.

\subsection{Associated branching process}

An important property of a~transient nearest-neighbour random walk on $\Z$ is its duality with a~branching process. Consider a~walk $(X_n)_{n\in\N}$ such that $X_0 = 0$ and $X_n \to \infty$ almost surely, evolving in an environment $\omega = (\omega_k)_{k\in\Z}$. Recall that, for $n \in \N$,
\begin{equation*}
	T_n = \inf\{k \in \N \,:\, X_k = n \}
\end{equation*}
is the first passage time and, for $k \leq n$,
\begin{equation*}
	L_k(n) = |\{m \leq T_n: X_m = k\}|
\end{equation*}
is the local time, i.e.\ the number of times the walk visits site $k$ before reaching $n$. First of all, note that the transience of the walk implies that, almost surely, the walk spends only finite time on the negative half-axis. That is, for any sequence $b_n \to \infty$,
\begin{equation*}
	\frac{\max_{k < 0} L_k(n)}{b_n} \to 0 \quad \PP\textnormal{-a.s.}
\end{equation*}
Therefore, when examining the limit theorems, we may restrict our analysis to the variables $L_k(n)$ for $k \geq 0$.

\begin{figure}[h!]
	\begin{minipage}{0.9\linewidth}
		\centering
		\includegraphics[width=0.6\linewidth]{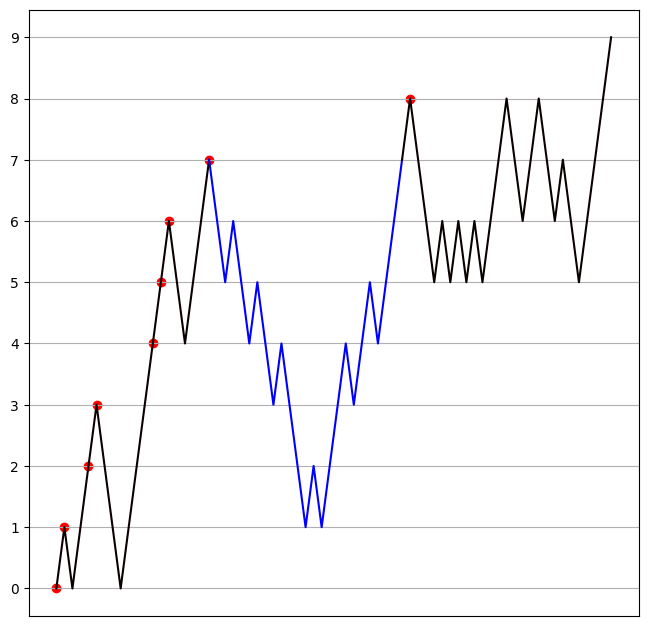}
	\end{minipage}
	\begin{minipage}{0.9\linewidth}
		\centering
		\begin{tikzpicture}
			[every node/.style={fill=black,circle,inner sep=1.5pt}, level distance = 2.1em]
			\node[fill=none]{}
			child[missing] {node{}}
			child { [sibling distance = 1.2em] node[fill=red] {} edge from parent[draw=none]
				child { [sibling distance = 2.5em] node{}
					child { [sibling distance = 1.0em] node{}
						child { [sibling distance = 5.0em] node{}
							child[missing] {node{}}
						}
						child { [sibling distance = 5.0em] node{}
							child[missing] {node{}}
						}
						child { [sibling distance = 5.0em] node{}
							child[missing] {node{}}
						}
						child { [sibling distance = 5.0em] node{}
							child[missing] {node{}}
						}
						child[missing] {node{}}
					}
					child[missing] {node{}}
				}
				child { [sibling distance = 2.5em] node{}
					child { [sibling distance = 5.0em] node{}
						child[missing] {node{}}
					}
					child[missing] {node{}}
				}
				child { [sibling distance = 1.7em] node{}
					child { [sibling distance = 5.0em] node{}
						child[missing] {node{}}
					}
					child { [sibling distance = 2.5em] node{}
						child { [sibling distance = 5.0em] node{}
							child[missing] {node{}}
						}
						child[missing] {node{}}
					}
					child[missing] {node{}}
				}
				child { [sibling distance = 2.5em] node[fill=red, xshift=2.5em] {} edge from parent[draw=none]
					child { [sibling distance = 1.7em] node{} edge from parent[blue, thick]
						child { [sibling distance = 5.0em] node{}
							child[missing] {node{}}
						}
						child { [sibling distance = 1.2em] node{}
							child { [sibling distance = 5.0em] node{}
								child[missing] {node{}}
							}
							child { [sibling distance = 1.2em] node{}
								child { [sibling distance = 5.0em] node{}
									child[missing] {node{}}
								}
								child { [sibling distance = 2.5em] node{}
									child { [sibling distance = 1.7em] node{}
										child { [sibling distance = 5.0em] node{}
											child[missing] {node{}}
										}
										child { [sibling distance = 5.0em] node{}
											child[missing] {node{}}
										}
										child[missing] {node{}}
									}
									child[missing] {node{}}
								}
								child { [sibling distance = 5.0em] node{}
									child[missing] {node{}}
								}
								child[missing] {node{}}
							}
							child { [sibling distance = 5.0em] node{}
								child[missing] {node{}}
							}
							child[missing] {node{}}
						}
						child[missing] {node{}}
					}
					child { [sibling distance = 2.5em] node[fill=red, xshift=2em] {} edge from parent[draw=none]
						child { [sibling distance = 2.5em] node{}
							child { [sibling distance = 5.0em] node{}
								child[missing] {node{}}
							}
							child[missing] {node{}}
						}
						child { [sibling distance = 5.0em] node[fill=red] {} edge from parent[draw=none]
							child[missing] {node{}}
							child { [sibling distance = 5.0em] node[fill=red, xshift=-1em] {} edge from parent[draw=none]
								child[missing] {node{}}
								child { [sibling distance = 2.5em] node[fill=red, xshift=-1em] {} edge from parent[draw=none]
									child { [sibling distance = 2.5em] node{}
										child { [sibling distance = 2.5em] node{}
											child { [sibling distance = 5.0em] node{}
												child[missing] {node{}}
											}
											child[missing] {node{}}
										}
										child[missing] {node{}}
									}
									child { [sibling distance = 5.0em] node[fill=red] {} edge from parent[draw=none]
										child[missing] {node{}}
										child { [sibling distance = 2.5em] node[fill=red, xshift=-1em] {} edge from parent[draw=none]
											child { [sibling distance = 5.0em] node{}
												child[missing] {node{}}
											}
											child { [sibling distance = 5.0em] node[fill=red] {} edge from parent[draw=none]
												child[missing] {node{}}
			}}}}}}}}}; 
		\end{tikzpicture}
		\vspace{10pt}
	\end{minipage}
	\caption{Exemplary path of a~simple walk and corresponding realization of a~branching process. Immigrants (marked in red) correspond to arrivals to new sites. The subtrees correspond to the excursions of the walk; the first excursion from $7$ and its corresponding subtree were marked in blue.}
\end{figure}

The visits to $k \geq 0$ counted by $L_k(n)$ may be split into visits from the left and from the right, that is,
\begin{equation*}
	\begin{split}
		L_k(n) & = |\{m \leq T_n \, : \, X_m = k \}| \\
		& = |\{m \leq T_n \, : \, X_{m-1} = k-1, \, X_m = k\}|
		+ |\{m \leq T_n \, : \, X_{m-1} = k+1, \, X_m = k\}|.
	\end{split}
\end{equation*}
Moreover, since the walk is simple, it makes a~step from $k-1$ to $k$ when it visits site $k$ for the first time. After that, it may make some excursions to the left from $k$; such an excursion always begins with a~step from $k$ to $k-1$ and ends with a~step from $k-1$ to $k$. Therefore, to count all the visits the walk makes to given sites, it is enough to count its steps to the left. That is, for fixed $n \in \N$ and $0 \leq k \leq n$,
\begin{equation*}
	\begin{split}
		L_k(n) & = 1  + |\{m \leq T_n \, : \, X_{m-1} = k, \, X_m = k-1\}|
		+ |\{m \leq T_n \, : \, X_{m-1} = k+1, \, X_m = k\}| \\
		& = 1 + \wt{Z}_{k-1}(n) + \wt{Z}_{k}(n),
	\end{split}
\end{equation*}
where $\wt{Z}_k(n) = |\{m \leq T_n \, : \, X_{m-1} = k + 1, \, X_m = k \}|$ is the number of visits to point $k$ from the right before reaching $n$. The main observation is that the process given by $Z_k(n) = \wt{Z}_{n-k}(n)$ has a~branching structure. Every step from $n-k$ to $n-k-1$ occurs either before the walk discovered the site $n-k+1$, or between consecutive steps from $n-k+1$ to $n-k$. That is,
\begin{equation*}
	Z_{k+1}(n) = \sum_{j=1}^{Z_{k}(n)+1} G_{k}^{(j)}(n),
\end{equation*}
where $G_{k}^{(j)}(n)$, for $j \leq Z_k(n)$, counts the number of steps from $n-k$ to $n-k-1$ between $j$'th and $j+1$'th step from $n-k+1$ to $n-k$, and $G_{k}^{(Z_k(n)+1)}(n)$ counts the number of steps from $n-k$ to $n-k-1$ before the first visit to $n-k+1$. Observe that, due to the strong Markov property of the walk, the variables $G_{k}^{(j)}(n)$ are i.i.d., independent of $Z_{k}(n)$, and have geometric distribution with parameter $\omega_{n-k}$, i.e.
\begin{equation*}
	\Po\left[G_{k}^{(j)}(n) = m\right] = \omega_{n-k}(1-\omega_{n-k})^{m} \quad \textnormal{for } m = 0,1,2,\dots.
\end{equation*}
Therefore, $Z(n) = (Z_k(n))_{0 \leq k \leq n+1}$ is a~branching process in a~random environment with unit immigration; note that we do not count the immigrant, so that $Z_0(n) = 0$. Moreover,
\begin{equation}\label{eq:branchingDist}
	\left(L_k(n)\right)_{0\leq k \leq n} = \left(1 + Z_{n-k+1}(n) + Z_{n-k}(n) \right)_{0 \leq k \leq n}.
\end{equation}

In particular, if $X$ is a~random walk in a~sparse random environment, its associated branching process is a~branching process in a~sparse random environment (BPSRE). If in the above construction we consider the walk stopped upon reaching a~marked point $S_n$, the branching process starts from one immigrant and evolves in the environment divided into blocks of lengths given by $(\xi_{n-k})_{k<n}$; within the blocks the reproduction is given by the law $Geo(1/2)$, while the particles in the $k$'th marked generation are born with the law $Geo(\lambda_{n-k})$. When examining the process $Z$, it is convenient -- and valid, since the environment is given by an i.i.d.\ sequence -- to reverse the enumeration, so that the block lengths are given by $(\xi_k)_{k\in\N}$ and reproduction law in $k$'th marked point is $Geo(\lambda_k)$. The process $Z = (Z_k)_{k\in\N}$ may be then defined formally as follows: for any fixed environment $\omega$, under $\Po$,
\begin{align*}
	Z_0 &= 0, \\
	Z_k &= \sum_{j=1}^{Z_{k-1}+1} G_k^{(j)},
\end{align*}
where the variables $(G_k^{(j)})_{j\in\N}$ are independent of $Z_{k-1}$ and each other, and
\begin{equation*}
	G_k^{(j)} \sim Geo(\omega_k) \quad \textnormal{for} \quad
	\omega_k = \begin{cases}
		\lambda_n \quad & \textnormal{if $k = S_n$ for some $n\in\N$;} \\
		1/2 & \textnormal{otherwise,}
	\end{cases}
\end{equation*}
where $Geo(\omega_k)$ denotes the geometric distribution with parameter $\omega_k$ supported on the set $\{0,1,\dots\}$. Whenever examining a~BPSRE, we will denote the population at marked generations with bold letters, that is, for example, $\Z_n = Z_{S_n}$.

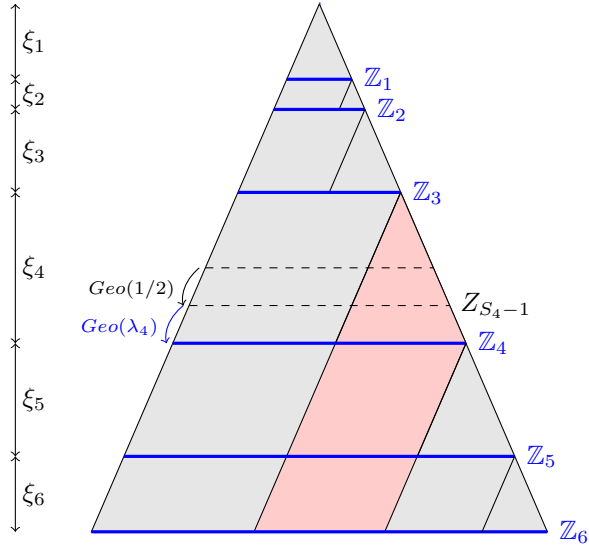
\begin{figure}[h!]
	\centering
	\begin{tikzpicture}
		[every node/.style={inner sep=0pt}]
		\draw[fill=gray!20] (0,0)--(-3.00,-7)--(3.00,-7)--(0,0);
		\draw[fill=red!20] (1.07,-2.50) -- (-0.86,-7.00) -- (0.86,-7.00) -- (1.93,-4.50) -- (1.07,-2.50);
		\draw[very thick, color=blue] (-0.00,-0.00) -- (0.00,-0.00); 
		\draw[very thick, color=blue] (-0.43,-1.00) -- (0.43,-1.00) node[right, xshift=4, text=blue] {\small $\Z_1$};
		\draw[<->](-4.00,-0.00) -- (-4.00,-1.00);
		\node at (-3.75,-0.50) {\small $\xi_1$};
		\draw[very thick, color=blue] (-0.60,-1.40) -- (0.60,-1.40) node[right, xshift=4, text=blue] {\small $\Z_2$};
		\path[draw] (0.43,-1.00) -- (0.26,-1.40);
		\draw[<->](-4.00,-1.00) -- (-4.00,-1.40);
		\node at (-3.75,-1.20) {\small $\xi_2$};
		\draw[very thick, color=blue] (-1.07,-2.50) -- (1.07,-2.50) node[right, xshift=4, text=blue] {\small $\Z_3$};
		\path[draw] (0.60,-1.40) -- (0.13,-2.50);
		\draw[<->](-4.00,-1.40) -- (-4.00,-2.50);
		\node at (-3.75,-1.95) {\small $\xi_3$};
		\draw[very thick, color=blue] (-1.93,-4.50) -- (1.93,-4.50) node[right, xshift=4, text=blue] {\small $\Z_4$};
		\path[draw] (1.07,-2.50) -- (0.21,-4.50);
		\draw[dashed] (-1.71,-4.00) -- (1.71,-4.00) node[right, xshift=4] {\small $Z_{S_4-1}$};
		\draw[dashed] (-1.50,-3.50) -- (1.50,-3.50);
		\draw[->, blue](-1.79,-4.00) .. controls (-2.09,-4.25) and (-2.01,-4.50) .. (-2.01,-4.50) node[left, yshift=6.00, xshift=-3] {\tiny$Geo(\lambda_4)$};
		\draw[->](-1.58,-3.50) .. controls (-1.87,-3.75) and (-1.79,-4.00) .. (-1.79,-4.00) node[left, yshift=6.00, xshift=-3] {\tiny$Geo(1/2)$};
		\draw[<->](-4.00,-2.50) -- (-4.00,-4.50);
		\node at (-3.75,-3.50) {\small $\xi_4$};
		\draw[very thick, color=blue] (-2.57,-6.00) -- (2.57,-6.00) node[right, xshift=4, text=blue] {\small $\Z_5$};
		\path[draw] (1.93,-4.50) -- (1.29,-6.00);
		\draw[<->](-4.00,-4.50) -- (-4.00,-6.00);
		\node at (-3.75,-5.25) {\small $\xi_5$};
		\draw[very thick, color=blue] (-3.00,-7.00) -- (3.00,-7.00) node[right, xshift=4, text=blue] {\small $\Z_6$};
		\path[draw] (2.57,-6.00) -- (2.14,-7.00);
		\draw[<->](-4.00,-6.00) -- (-4.00,-7.00);
		\node at (-3.75,-6.50) {\small $\xi_6$};
	\end{tikzpicture}
	\caption{Schematic picture of the process $Z$. Horizontal blue lines represent marked generations. Within each block between marked generations, the triangular area represents progeny of immigrants that arrived in this block. The coloured region represents process $Y(4)$.}
	\label{fig:bpsre}
\end{figure}

For $k \in \N_+$, we will denote by $Y(k)$ the process counting the progeny of immigrants from the $k$'th block, i.e.\ those arriving at times $S_{k-1}, S_{k-1}+1, \dots S_k-1$. Let, for $j \geq 0$, $Y_j(k)$ denote the number of descendants of these immigrants present in generation $S_{k-1}+j$. Observe that the process $Y(k)$ starts with one immigrant at time $j=0$; it evolves with unit immigration and $Geo(1/2)$ reproduction law up until time $j=\xi_k-1$. The last immigrant arrives at this time, and the particles at time $j=\xi_k$ are born with the law $Geo(\lambda_k)$. From there on the process $Y(k)$ evolves without immigration (see Figure \ref{fig:bpsre}).

We will use the convention that $Y_j(k) = 0$ for $j < 0$, so that
\begin{equation*}
	Z_n = \sum_{k \in \N} Y_{n-S_{k-1}}(k).
\end{equation*}
Observe that the processes $Y(k)$ are independent under~$\Po$ and identically distributed under~$\PP$.

The branching process in a~sparse random environment was studied in \cite{buraczewski:2019:random} for the purpose of proving annealed limit theorems for the first passage times. An important observation is that the transience of the walk implies quick extinctions of the branching process. Let 
\begin{equation*}
	\tau_0 = 0, \quad \tau_n = \inf\{k > \tau_{n-1} \, : \, \Z_k = 0\}
\end{equation*}
be the extinction times (note that we only consider the extinctions at marked generations). Observe that when the extinction occurs, the process starts anew from one immigrant. Thus the sequence $(\tau_{n+1}-\tau_{n})_{n\in\N}$ is i.i.d.\ under $\PP$, and the extinction times split the process $Z$ into independent fragments. The following is Lemma~4.1 from \cite{buraczewski:2019:random}; it implies that the extinctions occur rather often in the case of transient RWSRE.
\begin{lem}\label{lem:tau}
	Assume that $\E\log\rho < 0$ and $\E\log\xi < \infty$. Then $\EE\tau_1 < \infty$. If additionally $\E\rho^\eps < \infty$ and $\E\xi^\eps < \infty$ for some $\eps > 0$, then there exists $c > 0$ such that $\EE e^{c \tau_1} < \infty$.
\end{lem}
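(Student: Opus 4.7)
The plan is to show that $\PP[\tau_1 > n]$ is small: for the first claim, summable in $n$, and for the second, geometrically decaying. Since $\tau_1 > n$ requires $\Z_k \geq 1$ at every marked generation $k = 1, \ldots, n$, one must control this strong ``survival for $n$ marked generations'' event. The natural auxiliary object is the random walk associated with the drifts,
\begin{equation*}
U_m := \log \rho_1 + \log \rho_2 + \cdots + \log \rho_m.
\end{equation*}

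The quenched computation goes as follows. Using the block decomposition $\Z_n = \sum_{k=1}^n Y^k_{S_n - S_{k-1}}$, together with the facts that the offspring law $Geo(1/2)$ inside blocks has mean $1$ and $Geo(\lambda_k)$ at the $k$-th marked generation has mean $\rho_k$, one verifies by induction that $\Eo[Y^k_{S_n - S_{k-1}}] = \xi_k\,\rho_k\rho_{k+1}\cdots\rho_n$ (the factor $\xi_k$ accounts for the $\xi_k$ immigrations entering $Y^k$ inside its own block, after which no further immigration occurs). Combining this with the classical bound on the quenched survival of a pure BPRE by its mean, and a union bound over immigrant lineages, yields
\begin{equation*}
\Po[\Z_n > 0] \;\leq\; \sum_{k=1}^n \min\!\bigl(1,\; \xi_k\,\rho_k\rho_{k+1}\cdots\rho_n\bigr),
\end{equation*}
and the sharper bound on $\Po[\tau_1 > n]$ needed for the proof is obtained by taking into account that survival through \emph{all} marked generations forces the trajectory $(U_j)_{j \leq n}$ to keep returning close to its running minimum.

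Annealed, two regimes appear. Under the weaker assumptions $\E\log\rho<0$ and $\E\log\xi<\infty$, the walk $U_m \to -\infty$ almost surely and its global supremum is a.s.\ finite; a ladder-epoch decomposition of $U$, combined with $\E\log\xi<\infty$ to control the $\xi$-factors on a logarithmic scale, yields $\sum_n \PP[\tau_1 > n] < \infty$, whence $\EE\tau_1 < \infty$. Under the stronger assumptions $\E\rho^\eps<\infty$ and $\E\xi^\eps<\infty$, convexity of $s \mapsto \E \rho^s$ (which equals $1$ at $s=0$ with negative derivative $\E\log\rho<0$) gives $\E\rho^{\eps'}<1$ for some $\eps' \in (0, \eps]$. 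Chernoff's bound then yields $\PP[U_n \geq -\delta n] \leq e^{-cn}$ for suitable $\delta, c > 0$, and combining with $\E\xi^{\eps'}<\infty$ via H\"older produces $\PP[\tau_1 > n] \leq C r^n$ for some $r \in (0,1)$, from which $\EE e^{c''\tau_1}<\infty$ for any $c'' < -\log r$.

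The main obstacle is the first case. The naive crude bound $\PP[\tau_1 > n] \leq \PP[\Z_n > 0]$ does not decay in $n$: constant-rate immigration keeps $\PP[\Z_n > 0]$ bounded away from zero by a stationarity-type effect. The correct fix is to exploit explicitly that survival for $n$ consecutive marked generations forces the walk $U_m$ to remain close to its running minimum, whose excursion statistics under $\E\log\rho<0$ must be extracted by a ladder-epoch argument — delicate when $\rho$ is heavy-tailed so that $\E\rho \geq 1$ despite $\E\log\rho<0$, but standard in the theory of subcritical branching processes in random environment. Under the stronger moment assumption the difficulty dissolves, since the Cram\'er-type exponential decay of $U_n$ comfortably absorbs the $\xi$-factor through H\"older.
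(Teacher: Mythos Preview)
The paper does not prove this lemma at all: the sentence immediately preceding it reads ``The following is Lemma~4.1 from \cite{buraczewski:2019:random}'', and no argument is supplied. So there is no in-paper proof to compare your plan against; the result is imported wholesale.

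On the substance of your sketch: the exponential-moment half is essentially correct and complete in outline. Convexity of $s\mapsto\E\rho^s$ together with $\E\log\rho<0$ and $\E\rho^\eps<\infty$ does yield $\E\rho^{\eps'}<1$ for some $\eps'\in(0,\eps]$; from there, the quenched bound $\Po[\tau_1>n]\leq\min(1,\Eo\Z_n)$ with $\Eo\Z_n=\sum_{k=1}^n\xi_k\Pi_{k,n}$ is not by itself enough (as you note, $\EE\Z_n$ is bounded away from $0$), but the refinement you indicate --- that $\{\tau_1>n\}$ forces each partial product $\Pi_{j,k}$ to stay moderate --- combined with Chernoff on the walk $U_m=\sum_{j\leq m}\log\rho_j$ and $\E\xi^{\eps'}<\infty$ does deliver geometric decay.

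For the first claim your plan is honest about its own gap: you identify the obstacle (the crude bound $\PP[\tau_1>n]\leq\PP[\Z_n>0]$ fails because immigration keeps the latter bounded away from zero) and gesture at a ladder-epoch argument, but do not execute it. The actual argument in \cite{buraczewski:2019:random} bounds $\Po[\tau_1>n]$ by the quenched survival probability of the lineage of a \emph{single} immigrant over a suitable stretch and iterates, which is close in spirit to what you describe; but as written, your first-moment sketch stops short of a proof. If you want to make it self-contained, the cleanest route is to observe that on $\{\tau_1>n\}$ at least one of the $Y^k$ with $k\leq n/2$ survives $n/2$ marked generations, bound that quenched survival probability by $\min(1,\xi_k\Pi_{k,k+\lfloor n/2\rfloor})$, and then control $\E\bigl[\sum_{k\leq n/2}\min(1,\xi_k\Pi_{k,k+\lfloor n/2\rfloor})\bigr]$ using only $\E\log\rho<0$ and $\E\log\xi<\infty$ via a truncation at level $e^{\delta n}$.
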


\medskip

Observe that due to \eqref{eq:branchingDist} and the fact that the environment is given by an i.i.d.\ sequence we have, for any $n \in \N$,
\begin{equation}\label{eq:Zk-to-Lk}
	\max_{0 \leq k \leq S_n} L_{k}(S_n) \od 1 + \max_{0 \leq k \leq S_n}(Z_k + Z_{k+1}).
\end{equation}
Therefore, to obtain limit theorems for the sequence of maximal local times along the marked points, one may examine the maximal generations of the corresponding branching process. We conclude this section by showing that in the setting of moderately sparse environment this is sufficient also to obtain annealed limit theorems for the sequence $(\max_{k\leq n} L_k(n))_{n\in\N}$. Recall that $(a_n)_{n\in\N}$ given by \eqref{def:an} is regularly varying with index $1/\beta$.

\begin{lem}\label{lem:Sn-to-n}
	Assume that $\E\xi < \infty$. If there exist constants $c>0$, $\gamma > 0$ and a~sequence $(b(n))_{n\in\N}$ which is regularly varying with index $1/\gamma$ such that for every $x > 0$,
	\begin{equation*}
		\lim_{n\to\infty}\PP\left[ \frac{\max_{k \leq S_n} L_k(S_n)}{b(n)} > x \right] = 1 - e^{-c x^{-\gamma}},
	\end{equation*}
	then for every $x > 0$,
	\begin{equation*}
		\lim_{n\to\infty}\PP\left[ \frac{\max_{k \leq n} L_k(n)}{b(n)} > x \right] = 1 - e^{-(c/\E\xi) x^{-\gamma}}.
	\end{equation*}
\end{lem}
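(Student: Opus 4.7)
The plan is to reduce the maximum over $k\le n$ to a maximum over $k\le S_m$ for a suitable index $m$, and to control the error introduced by this replacement via the law of large numbers for $(S_n)$ together with the regular variation of $b$. The first ingredient I would record is that the quantity $M(n):=\max_{k\le n}L_k(n)$ is non-decreasing in $n$: indeed, $T_n$ is non-decreasing in $n$, so $L_k(n)\le L_k(n+1)$ for every fixed $k$, and the index set over which the maximum is taken grows with $n$. The second ingredient is the renewal observation: setting $N(n):=\max\{k\ge 0:S_k\le n\}$, the assumption $\E\xi<\infty$ together with the strong law of large numbers yields $N(n)/n\to 1/\E\xi$ $\PP$-a.s.

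By construction $S_{N(n)}\le n\le S_{N(n)+1}$, so monotonicity of $M$ gives the pathwise sandwich
\[
M(S_{N(n)})\le M(n)\le M(S_{N(n)+1}).
\]
Fixing $\eps\in(0,1)$ and putting $m_\eps^-(n):=\lfloor(1-\eps)n/\E\xi\rfloor$ and $m_\eps^+(n):=\lceil(1+\eps)n/\E\xi\rceil+1$, the SLLN ensures that on an event $A_n$ with $\PP[A_n]\ge 1-\eps$ for all large $n$,
\[
m_\eps^-(n)\le N(n)\le N(n)+1\le m_\eps^+(n),
\]
and hence, again by monotonicity, $M(S_{m_\eps^-(n)})\le M(n)\le M(S_{m_\eps^+(n)})$ on $A_n$. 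The point is that the random-index issue is absorbed by monotonicity, so the hypothesis only needs to be applied along deterministic subsequences.

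The final step is to combine the hypothesis with the regular variation of $b$. Since $b$ has index $1/\gamma$ and $m_\eps^\pm(n)/n\to(1\pm\eps)/\E\xi$, we have $b(m_\eps^\pm(n))/b(n)\to((1\pm\eps)/\E\xi)^{1/\gamma}$. Applying the assumed Fr\'echet limit with scaling $b(m_\eps^\pm(n))$ and rewriting yields
\[
\lim_{n\to\infty}\PP\!\left[\frac{M(S_{m_\eps^\pm(n)})}{b(n)}>x\right]=1-\exp\!\left(-\frac{c(1\pm\eps)}{\E\xi}\,x^{-\gamma}\right).
\]
Inserting these into the sandwich and absorbing the $\eps$-loss coming from $\PP[A_n^c]\le\eps$ gives
\[
1-e^{-c(1-\eps)x^{-\gamma}/\E\xi}-\eps\le\liminf_{n\to\infty}\PP\!\left[\tfrac{M(n)}{b(n)}>x\right]\le\limsup_{n\to\infty}\PP\!\left[\tfrac{M(n)}{b(n)}>x\right]\le 1-e^{-c(1+\eps)x^{-\gamma}/\E\xi}+\eps,
\]
and letting $\eps\to 0$ yields the claim. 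The only mildly delicate step is the combination of the distributional convergence along $(m_\eps^\pm(n))$ with the deterministic scaling ratio $b(m_\eps^\pm(n))/b(n)$; this is where regular variation of $b$ is essential, but no randomness is left in that factor, so the argument reduces to continuous mapping.
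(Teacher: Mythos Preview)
Your proof is correct and follows essentially the same approach as the paper: both sandwich $n$ between the marked points $S_{N(n)}$ and $S_{N(n)+1}$ (the paper writes $\nu_n$ for your $N(n)+1$), replace the random index by a deterministic one via the strong law of large numbers, and then invoke regular variation of $b$ to adjust the scaling. The only difference is cosmetic: you make the monotonicity of $n\mapsto M(n)$ explicit, whereas the paper uses it implicitly when passing from $M(\nu_n-1)$ to $M(\lfloor n(1/\E\xi-\eps)\rfloor-1)$.
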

\begin{proof}
	Denote, for $n \in \N$,
	\begin{equation*}
		\nu_n = \inf\{k>0 \, : \,S_k > n \}.
	\end{equation*}
	The law of large numbers for renewal processes (c.f.\ \cite[Theorem 2.6.1]{durrett2019probability}) guarantees that $\P$-almost surely
	\begin{equation*}
		\frac{\nu_n}{n} \xrightarrow{n\to\infty} \frac{1}{\E\xi}.
	\end{equation*}
	Denote, for $m\in\N$, $M(m) = \max_{k\leq S_m}L_k(S_m)$. Since $S_{\nu_n-1} \leq n < S_{\nu_n}$, we have, for any $\eps \in (0,1/\E\xi)$,
	\begin{equation*}
		\begin{split}
			\PP\left[ b(n)^{-1} \max_{0 \leq k < n} L_k(n) > x\right]
			& \geq \PP\left[ b(n)^{-1} M(\nu_n - 1) > x\right] \\
			& \geq \PP\left[ b(n)^{-1} M(n (1/\E\xi - \eps) - 1) > x\right] - \PP\left[ |1/\E\xi - \nu_n/n| > \eps \right] \\
			& \xrightarrow{n\to\infty} 1 - \exp(-c (1/\E\xi - \eps) x^{-\gamma}),
		\end{split}
	\end{equation*}
	where we used the fact that
	\begin{equation*}
		\frac{b(n(1/\E\xi - \eps)-1)}{b(n)} \to (1/\E\xi - \eps)^{1/\gamma}
	\end{equation*}
	since $b(n)$ is regularly varying. Similarly,
	\begin{equation*}
		\begin{split}
			\PP\left[ b(n)^{-1} \max_{0 \leq k < n} L_k(n) > x\right]
			& \leq \PP\left[ b(n)^{-1} M(\nu_n) > x\right] \\
			& \leq \PP\left[ b(n)^{-1} M(n (1/\E\xi + \eps)) > x\right] + \PP\left[ |1/\E\xi - \nu_n/n| > \eps \right] \\
			& \xrightarrow{n\to\infty} 1 - \exp(-c (1/\E\xi + \eps) x^{-\gamma}),
		\end{split}
	\end{equation*}
	which ends the proof since $\eps\in(0,1/\E\xi)$ is arbitrary.
\end{proof}

\subsection{Estimates of the processes related to the environment}\label{sec:estim}

Define
\begin{equation}\label{def:Rabarbar}
	\Rb_n = 1 + \rho_n + \rho_n\rho_{n+1} + \dots = \sum_{k=n-1}^\infty \Pi_{n,k},
\end{equation}
where $\Pi_{n,k} = \prod_{j=n}^k \rho_j$ if $n \leq k$ and $\Pi_{n,k} = 1$ otherwise. Then the following relation holds:
\begin{equation}\label{eq:Rbrec}
	\Rb_n = 1 + \rho_n \Rb_{n+1}.
\end{equation}
Moreover, the sequence $(\Rb_n)_{n\in\Z}$ is stationary under $\P$. Observe that if $\E\rho^\gamma < 1$ for some $\gamma > 0$, then $\E\Rb_1^\gamma < \infty$ (see the proof of Lemma~2.3.1 in \cite{buraczewski:2016:power}), whereas under $(A)$, the distribution of $\rho$ satisfies the assumptions of the Kesten-Goldie theorem \cite[Theorem 2.4.4]{buraczewski:2016:power}, thus
\begin{equation*}
	\P[\Rb_1 > x] \sim c x^{-\alpha}
\end{equation*}
for some constant $c$. Therefore
\begin{equation}\label{eq:Rabarbarleq}
	\P[\Rb_1 > x] \leq C_\gamma x^{-\gamma} \quad \textnormal{for some $C_\gamma < \infty$ and all $x > 0$,}
\end{equation}
whenever either $\E\rho^\gamma < 1$, or $\E\rho^\gamma = 1$ and the Kesten-Goldie theorem holds for $\Rb_1$. As can be seen in the proofs of Lemma~6 in \cite{kesten1975limit} and Lemma~5.6 in \cite{buraczewski:2019:random}, in the case of dominating drift it is $\Rb_1$ from whom the total population of the process $Z$ (which corresponds to the first passage times of the walk) inherits its annealed tail behaviour.

Let, for $m \in \N$, the {\it potential} $\Psi$ be defined as
\begin{equation}\label{def:psi}
	\Psi_{m,k} = \Pi_{m,n} \quad \textnormal{for } k \in [S_n, S_{n+1}).
\end{equation}
As we will see, maxima of the potential determine the limiting behaviour of maximal generation of $Z$ in the same way as $\Rb_1$ determines the asymptotics of the total population. Let
\begin{equation}\label{def:Mpsi}
	M_{\Psi,m} = \max_{k \geq S_m-1} (\Psi_{m,k} + \Psi_{m,k+1}).
\end{equation}
Then the sequence $(M_{\Psi, m})_{m\in\N}$ is stationary under $\P$; denote by $M_{\Psi}$ its generic element. Observe that
\begin{equation*}
	M_{\Psi,1} \leq 2 \max_{k\geq S_1-1} \Psi_{1,k} = 2 \max_{n\geq 0} \Pi_{1,n} \leq 2 \Rb_1,
\end{equation*}
thus
\begin{equation}\label{eq:psimoment}
	\E M_{\Psi}^\gamma < \infty \quad \textnormal{ whenever } \E\rho^\gamma < 1.
\end{equation}

\subsection{Auxiliary lemmas}

The following lemma, concerning a~classic Galton-Watson process, will be used repeatedly to estimate the growth of BPSRE in the unmarked generations.

\begin{lem}\label{lem:maxGW}
	Let $(X_n)_{n \in \N}$ be a~Galton-Watson process with $X_0 = x_0$, reproduction law $Geo(1/2)$, and no immigrants. Let $(\bar{X}_n)_{n \in \N}$ be an analogous process with one immigrant arriving at each generation (recall that the immigrant is not counted by the process). Then the following hold for any $N \in \N$:
	\begin{equation}\label{eq:GWnoimm}
		\EE \left[ \max_{k\leq N} (X_k - x_0)^2\right] \leq 8 N x_0,
	\end{equation}
	\begin{equation}\label{eq:GWimmvar}
		\EE \left[ \max_{k\leq N} \bar{X}_k^2 \right] \leq 16 (N^2 + Nx_0 + x_0^2).
	\end{equation}
\end{lem}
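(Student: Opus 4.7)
The plan is to treat both inequalities as consequences of Doob's $L^2$ maximal inequality applied to an appropriate martingale, combined with an explicit computation of the variance of the terminal generation using the classical branching-process recursion. The offspring law $\text{Geo}(1/2)$ on $\{0,1,2,\dots\}$ has mean $1$ and variance $2$, so $X_k$ is a (positive) martingale and $\bar X_k$ is a submartingale with affine mean.

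For \eqref{eq:GWnoimm} I would observe that $M_k := X_k - x_0$ is a mean-zero $L^2$ martingale, so $M_k^2$ is a submartingale and Doob's inequality gives
\begin{equation*}
\EE\bigl[\max_{k\leq N}(X_k-x_0)^2\bigr] \leq 4\,\EE[(X_N-x_0)^2] = 4\,\var(X_N).
\end{equation*}
The standard variance recursion for a critical Galton--Watson process (with offspring variance $\sigma^2=2$) yields $\var(X_N)=\sigma^2 N x_0 = 2Nx_0$, producing the bound $8Nx_0$.

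For \eqref{eq:GWimmvar} the analogous argument applies to $\bar M_k := \bar X_k - \EE\bar X_k$. Since the recursion $\bar X_{k+1} = \sum_{j=1}^{\bar X_k+1} G_{k}^{(j)}$ gives $\EE\bar X_k = x_0 + k$, the centred process $(\bar M_k)$ is a martingale, and conditioning on $\bar X_k$ one gets
\begin{equation*}
\var(\bar X_{k+1}) = \var(\bar X_k) + 2(x_0+k+1),
\end{equation*}
hence $\var(\bar X_N) = N^2 + N(2x_0+1)$. Combining Doob's inequality $\EE[\max_{k\leq N}\bar M_k^2]\leq 4\var(\bar X_N)$ with the elementary estimate $\bar X_k^2 \leq 2\bar M_k^2 + 2(\EE\bar X_k)^2$ and the bound $\max_{k\leq N}(x_0+k)^2 \leq 2(x_0^2+N^2)$ produces something of the form $C(N^2+Nx_0+x_0^2)$; a small amount of rearrangement using AM--GM on the mixed term $Nx_0$ absorbs the leftover constants into the claimed constant $16$.

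The two estimates are routine, so there is no real obstacle; the only point that requires a little care is keeping the constants small enough in \eqref{eq:GWimmvar}, which is handled by the AM--GM step described above.
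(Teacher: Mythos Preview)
Your argument for \eqref{eq:GWnoimm} is identical to the paper's. For \eqref{eq:GWimmvar} your approach is correct but differs slightly from the paper's: you centre $\bar X_k$ to get a martingale, apply Doob to $\bar M_k$, and then uncentre via $\bar X_k^2\le 2\bar M_k^2+2(\EE\bar X_k)^2$. The paper instead applies Doob's $L^2$ inequality \emph{directly} to the nonnegative submartingale $(\bar X_k)$, obtaining $\EE[\max_{k\le N}\bar X_k^2]\le 4\,\EE[\bar X_N^2]$ with no centring step. It then computes $\EE[\bar X_N^2]$ by writing $\bar X_N=X'_N+I_N$ (progeny of the initial $x_0$ particles plus progeny of immigrants), and identifies the law of $I_N$ as $Geo(1/(N+1))$ via the gambler's-ruin duality with a simple symmetric walk. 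This avoids the triangle-inequality loss in your approach and gives the constant $16$ without any AM--GM juggling; it also yields the distributional fact $Y_{\xi_1-1}\sim Geo(1/\xi_1)$ under $\Po$, which the paper reuses later (in the proof of Lemma~\ref{lem:immmax}). Your route is perfectly valid, just marginally less economical.
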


\begin{proof}
	Since the reproduction mean of the examined Galton-Watson processes is $1$, $(X_n)_{n\in\N}$ is a~martingale with respect to its canonical filtration. In particular, $\EE X_n = x_0$ for every $n\in\N$. Moreover, Doob's maximal inequality implies
	$$\EE \left[ \max_{k \leq N} (X_k - x_0)^2 \right] \leq 4 \EE (X_N - x_0)^2 = 4 \var X_N.$$
	A~standard calculation (see, for instance, formula~(2) on p.~4 in~\cite{athreya}) gives
	$$ \var X_N = 2 N x_0,$$
	which implies \eqref{eq:GWnoimm}.
	
	Observe that $\bar{X}_n = X'_n + I_n$, where $X'$ denotes the descendants of the initial $x_0$ particles and $I$ denotes the progeny of immigrants. The processes $I$ and $X'$ are independent, and $X'$ has the same distribution as $X$. Moreover, the process $(\bar{X}_n)_{n\in\N}$ is a~non-negative submartingale, thus by Doob's maximal inequality,
	\begin{equation*}
		\EE \left[ \max_{k\leq N} \bar{X}_k^2 \right] \leq 4 \EE \left[ \bar{X}_N^2 \right] = 4 \left(\var X'_N + \var I_N + (\EE X'_N + \EE I_N)^2 \right).
	\end{equation*}
	
	We have already examined the mean and variance of $X'_N$. To calculate moments of $I_N$, we may express $I$ as a~sum of independent copies of $X$. Alternatively, we may use the duality of $I$ and a~simple symmetric random walk. It implies that $I_N$ is equal in distribution to the number of times the walk hits $0$ from the right when crossing the interval $[0,N+1]$ for the first time. By the classic gambler's ruin problem, the probability that the walk passes from $0$ to $N+1$ without returning to $0$ from the right is $1/(N+1)$. Therefore $I_N \sim Geo(1/(N+1))$, from which it follows that
	\begin{equation*}
		\EE I_N = N, \quad \var I_N = N^2 + N.
	\end{equation*}
	Hence
	\begin{equation*}
		\EE \left[ \bar{X}_N^2 \right] = 2Nx_0 + N^2 + N + (x_0 + N)^2 \leq 4(N^2 + Nx_0 + x_0^2),
	\end{equation*}
	which ends the proof of \eqref{eq:GWimmvar}.
	
\end{proof}

The next two lemmas will be of use to us both under assumptions $(A)$ and $(B)$. Therefore we consider the following set of assumptions:

\medskip
{\bf Assumptions $(\Gamma)$:} for some $\gamma \in (0,2]$,
\begin{itemize}
	\setlength\itemsep{0em}
	\item $\E\rho^\gamma \leq 1$ and \eqref{eq:Rabarbarleq} holds,
	\item $\E\xi^{\gamma/2} < \infty$,
	\item $\E \xi^{\gamma/2} \rho^\gamma < \infty$.
\end{itemize}
\medskip
Recall that \eqref{eq:transience} remains our standing assumption. As discussed in Section~\ref{sec:estim}, under $(A)$ the Kesten-Goldie theorem implies \eqref{eq:Rabarbarleq} with $\gamma=\alpha$, while under $(B)$, \eqref{eq:Rabarbarleq} holds with $\gamma=\beta+\delta$. In particular, assumptions $(A)$ imply $(\Gamma)$ with $\gamma = \alpha$ and assumptions $(B)$ imply $(\Gamma)$ with $\gamma = \beta + \delta$.

\smallskip

In the setting of the BPSRE, let $U_n$ be the progeny of the first immigrant residing in generation $n$, with the convention $U_0 = 1$, and denote $\U_n = U_{S_n}$. For fixed $N \in \N$, let $U^{(k)}$ for $k = 1,\dots, N$ be copies of the process $U = (U_n)_{n\in\N}$, evolving in the same environment and independent under $\Po$. That is, $(\sum_{k=1}^N U_n^{(k)})_{n\in\N}$ is a~BPSRE with $N$ initial particles evolving without immigration. Although the first part of the following lemma is analogous to results presented in \cite[Lemma~3]{kesten1975limit} and \cite[Lemma~5.6]{buraczewski:2019:random}, we provide the full proof as it gives some insight into the properties of the process $U$.

\begin{lem}\label{lem:Nimmigrants}
	Assume $(\Gamma)$. Then for some constant $C_1$,
	\begin{equation}\label{eq:Nimmsum}
		\PP \left[ \sum_{k=1}^N \sum_{n\geq 0} \U_n^{(k)} > x \right] \leq C_1 N^\gamma x^{-\gamma},
	\end{equation}
	\begin{equation}\label{eq:Nimmsumdiff}
		\PP \left[ \sum_{n\geq 0} \left|\sum_{k=1}^N  \U_n^{(k)} - N\Pi_{1,n}\right| > x \right] \leq C_1 N^{\gamma/2} x^{-\gamma}. 
	\end{equation}
	Moreover,
	\begin{equation}\label{eq:Nimmmax}
		\PP \left[ \max_{n \geq 1} \sum_{k=1}^N U^{(k)}_n > x \right] \leq C_1 N^\gamma x^{-\gamma},
	\end{equation}
	\begin{equation}\label{eq:Nimmmaxdiff}
		\PP \left[ \sum_{n \geq 1} \sum_{k=1}^N \max_{S_{n-1} \leq j < S_{n}} |U^{(k)}_j-\U^{(k)}_{n-1}| > x \right] \leq C_1 N^{\gamma/2} x^{-\gamma}. 
	\end{equation}
\end{lem}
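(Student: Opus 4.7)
The four bounds are interrelated. I would prove the ``deviation'' bounds \eqref{eq:Nimmsumdiff} and \eqref{eq:Nimmmaxdiff} first, and then deduce \eqref{eq:Nimmsum} and \eqref{eq:Nimmmax} from them by combining with the tail estimate \eqref{eq:Rabarbarleq} on~$\Rb_1$.

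For \eqref{eq:Nimmmaxdiff}, set $M_n^k := \max_{S_{n-1}\le j<S_n}|U_j^k-\U_{n-1}^k|$. Conditional on $\U_{n-1}^k$, the segment $(U_j^k)_{S_{n-1}\le j<S_n}$ is a critical $Geo(1/2)$ Galton--Watson process starting from $\U_{n-1}^k$ particles, so Lemma~\ref{lem:maxGW} gives $\Eo[(M_n^k)^2\mid\U_{n-1}^k]\le 8\xi_n\U_{n-1}^k$; two applications of Jensen (exploiting $\gamma/2\le 1$) yield $\Eo(M_n^k)^\gamma \le (8\xi_n)^{\gamma/2}\Pi_{1,n-1}^{\gamma/2}$. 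Markov's inequality with power $\gamma$, combined with subadditivity when $\gamma\le 1$ and Minkowski in $L^\gamma(\Po)$ (which harvests the $N^{\gamma/2}$ factor from the $N$-fold i.i.d.\ structure under $\Po$) when $\gamma\ge 1$, reduces the claim to summability of $\E\xi^{\gamma/2}\sum_n(\E\rho^{\gamma/2})^{n-1}$; the strict inequality $\E\rho^{\gamma/2}<1$ follows from $\E\log\rho<0$, $\E\rho^\gamma\le 1$, and convexity of $s\mapsto\E\rho^s$. For \eqref{eq:Nimmsumdiff}, the standard BPRE variance recursion gives
\begin{equation*}
\varo\U_n = \Pi_{1,n}^2\sum_{m=1}^n\frac{2\xi_m-1+1/\rho_m}{\Pi_{1,m-1}},
\end{equation*}
since unmarked generations contribute offspring variance $2$ and mean $1$, while the $n$-th marked generation contributes $\rho_n(1+\rho_n)$ and $\rho_n$. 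Setting $A_n := \sum_{k=1}^N(\U_n^k-\Pi_{1,n})$, the summands are i.i.d.\ and centered under $\Po$, so Jensen yields $\Eo|A_n|^\gamma \le (N\varo\U_n)^{\gamma/2}$, and Minkowski or subadditivity then produces $\Eo[(\sum_n|A_n|)^\gamma] \le N^{\gamma/2}B$ with $B$ depending only on the environment. Markov delivers the claim once $\E B<\infty$ is established; this can be approached via the bound $(\varo\U_n)^{1/2}\le \Pi_{1,n}(A_\infty')^{1/2}$ with $A_\infty' := \sum_m(2\xi_m-1+1/\rho_m)/\Pi_{1,m-1}$, using the tail \eqref{eq:Rabarbarleq} together with $\E\xi^{\gamma/2}<\infty$ and $\E\rho^\gamma\xi^{\gamma/2}<\infty$.

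The deductions of \eqref{eq:Nimmsum} and \eqref{eq:Nimmmax} are then clean. Since $\sum_k\sum_n\U_n^k = N\Rb_1+\sum_n A_n$,
\begin{equation*}
\PP\Bigl[\sum_k\sum_n\U_n^k>x\Bigr] \le \P\bigl[\Rb_1>x/(2N)\bigr] + \PP\Bigl[\sum_n|A_n|>x/2\Bigr] \le C N^\gamma x^{-\gamma}
\end{equation*}
by \eqref{eq:Rabarbarleq}, \eqref{eq:Nimmsumdiff}, and the inequality $N^{\gamma/2}\le N^\gamma$. For \eqref{eq:Nimmmax}, the block decomposition
\begin{equation*}
\max_{n\ge 1}\sum_k U_n^k \;\le\; \max_{j\ge 1}\sum_k\U_{j-1}^k \,+\, \sum_j\sum_k M_j^k
\end{equation*}
(coming from $U_n^k\le\U_{j-1}^k+|U_n^k-\U_{j-1}^k|$ for $n\in[S_{j-1},S_j)$) isolates the term appearing in \eqref{eq:Nimmmaxdiff} and reduces the first piece to $N\Rb_1+\sum_j|A_{j-1}|$, controlled analogously.

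\textbf{The hard part} is establishing $\E B<\infty$ in the proof of \eqref{eq:Nimmsumdiff}. In the critical case $\E\rho^\gamma=1$ (Assumptions~$(A)$), the Kesten--Goldie tail on $\Rb_1$ is sharp with exponent exactly $\gamma$, so $\E\Rb_1^\gamma=\infty$ and moment bounds on $\Rb_1$ are unavailable; a truncation argument on $\Rb_1$ will then be needed in order to convert its polynomial tail into a polynomial tail for the full quantity. The precise moment hypotheses in $(\Gamma)$ are tailored for this truncation to go through.
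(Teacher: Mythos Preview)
Your treatment of \eqref{eq:Nimmmaxdiff} and the deductions of \eqref{eq:Nimmsum}, \eqref{eq:Nimmmax} from the two deviation bounds are essentially what the paper does. The genuine gap is in \eqref{eq:Nimmsumdiff}, and you have correctly located it yourself: in the critical case $\E\rho^\gamma=1$ your route requires a finite $\gamma$-moment of an $\Rb_1$-type quantity, which is unavailable. The proposed ``truncation argument'' does not close the gap. Your quenched bound gives $\Eo\bigl(\sum_n|A_n|\bigr)^\gamma\le N^{\gamma/2}F(\omega)$ with $F$ comparable to $\Rb_1^\gamma$; since $\P[\Rb_1>s]\sim cs^{-\gamma}$, the variable $F$ has a tail of order $s^{-1}$, and the best one obtains from $\E\bigl[\min(1,N^{\gamma/2}x^{-\gamma}F)\bigr]$ is $CN^{\gamma/2}x^{-\gamma}\log x$, not $CN^{\gamma/2}x^{-\gamma}$. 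No simple truncation of $\Rb_1$ repairs this, because $A_\infty'$ and $\Rb_1$ are built from the same environment and are not independent.

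The idea you are missing is an Abel-type rearrangement that replaces the \emph{total} deviations $\U_n^k-\Pi_{1,n}$ by \emph{one-step} martingale increments $\U_n^k-\rho_n\U_{n-1}^k$. Using $\Rb_{n+1}-\rho_{n+1}\Rb_{n+2}=1$ one gets
\[
\sum_{n\ge0}\Bigl|\sum_{k=1}^N\U_n^k-N\Pi_{1,n}\Bigr|
\;\le\;
\sum_{n\ge1}\Bigl|\sum_{k=1}^N(\U_n^k-\rho_n\U_{n-1}^k)\Bigr|\,\Rb_{n+1}.
\]
The point is that $D_n:=\sum_k(\U_n^k-\rho_n\U_{n-1}^k)$ is \emph{independent} of $\Rb_{n+1}$, so the tail bound \eqref{eq:Rabarbarleq} can be used termwise via
\[
\PP\bigl[|D_n|\,\Rb_{n+1}>y\bigr]
=\int \P\bigl[\Rb_{n+1}>y/t\bigr]\,\PP[|D_n|\in dt]
\le C_\gamma\,y^{-\gamma}\,\EE|D_n|^\gamma,
\]
without ever invoking $\E\Rb_1^\gamma$. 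Since under $\Po$ the summands in $D_n$ are centred and i.i.d., Jensen gives $\EE|D_n|^\gamma\le N^{\gamma/2}\bigl(\E\rho^{\gamma/2}+2\E\rho^\gamma\xi^{\gamma/2}\bigr)(\E\rho^{\gamma/2})^{n-1}$, and the strict inequality $\E\rho^{\gamma/2}<1$ makes the series $\sum_n n^{2\gamma}\EE|D_n|^\gamma$ converge. This is the Kesten-type trick from \cite{kesten1975limit}; once you have it, the rest of your outline goes through.
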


\begin{proof}
	For fixed $n \geq 1$, under $\Po$,
	\begin{equation*}
		\U_{n} \od \sum_{k=1}^{U_{S_{n}-1}} G^{(n)}_k,
	\end{equation*}
	where $G^{(n)}_k$ are random variables with law $Geo(\lambda_n)$, independent of $U_{S_{n}-1}$ and each other. In particular,
	\begin{equation*}
		\Eo G_k^{(n)} = \rho_n, \quad \varo G_k^{(n)} = \rho_n + \rho_n^2.
	\end{equation*}
	Since in generations $S_{n-1}+1, \dots S_n-1$ the process evolves with offspring distribution $Geo(1/2)$, the calculation performed in the proof of Lemma~\ref{lem:maxGW} gives
	\begin{equation*}
		\Eo [U_{S_{n}-1} | \U_{n-1}] = \U_{n-1} \quad {\rm and } \quad \varo(U_{S_{n}-1} | \U_{n-1}) = 2(\xi_n-1) \U_{n-1}.
	\end{equation*}
	This in turn implies
	\begin{equation}\label{Uevolution}
		\begin{split}
			\Eo[\U_n | \U_{n-1}] &= \rho_n \U_{n-1},\\
			\Eo[(\U_n - \rho_n\U_{n-1})^2 | \U_{n-1}] &= (\rho_n - \rho_n^2 + 2 \xi_n\rho_n^2)\U_{n-1}.
		\end{split}
	\end{equation}
	In particular $\Eo \U_n = \Pi_{1,n}$.
	
	Observe that the processes $U^{(k)}$ evolve without immigration and the extinction time of each $U^{(k)}$ is stochastically dominated by $\tau_1$, which is finite $\PP$-a.s.\ by Lemma~\ref{lem:tau}. In particular, with probability~$1$ the series
	\begin{equation*}
		\sum_{k=1}^N\sum_{n\geq 0} \U_n^{(k)}
	\end{equation*}
	is indeed a~finite sum.	Recall the sequence $\Rb$ defined in \eqref{def:Rabarbar} and observe that, by \eqref{eq:Rbrec},
	\begin{equation*}
		\begin{split}
			\sum_{k=1}^N \sum_{n \geq 0} \U_n^{(k)}
			&= \sum_{k=1}^N \sum_{n \geq 0} \U_n^{(k)} ( \Rb_{n+1} - \rho_{n+1}\Rb_{n+2} ) \\
			& = \sum_{n \geq 1} \left(\sum_{k=1}^N (\U_{n}^{(k)} - \rho_{n}\U^{(k)}_{n-1})\right)\Rb_{n+1} + N \Rb_1
		\end{split}
	\end{equation*}
	and thus
	\begin{equation*}
		\sum_{n \geq 0} \left( \sum_{k=1}^N \U_n^{(k)} - N\Pi_{1,n} \right) = \sum_{n \geq 1} \left(\sum_{k=1}^N (\U_{n}^{(k)} - \rho_{n}\U^{(k)}_{n-1})\right)\Rb_{n+1}.
	\end{equation*}
	Moreover, representing each $\U_n^{(k)} - \Pi_{1,n}$ as a~telescoping sum, we obtain
	\begin{equation*}
		\begin{split}
			\sum_{n\geq 0} \left| \sum_{k=1}^N \U_n^{(k)} - N\Pi_{1,n} \right| &= \sum_{n\geq 0} \left| \sum_{j=1}^n \sum_{k=1}^N \left(\U_j^{(k)} \Pi_{j+1,n} - \U_{j-1}^{(k)} \Pi_{j,n}\right) \right| \\
			& \leq \sum_{n\geq 0} \sum_{j=1}^n \left| \sum_{k=1}^N \U_j^{(k)} - \rho_j \U_{j-1}^{(k)} \right| \Pi_{j+1,n} \\
			& = \sum_{j\geq1} \left| \sum_{k=1}^N \U_j^{(k)} - \rho_j \U_{j-1}^{(k)} \right| \Rb_{j+1},
		\end{split}
	\end{equation*}
	therefore
	\begin{equation*}
		\begin{split}
			\PP \left[ \sum_{n \geq 0} \left| \sum_{k=1}^N \U_n^{(k)} - N\Pi_{1,n} \right| > x \right]
			& \leq \PP \left[ \sum_{n \geq 1} \left|\sum_{k=1}^N \U_{n}^{(k)} - \rho_{n}\U^{(k)}_{n-1}\right|\Rb_{n+1} > x \right]
		\end{split}
	\end{equation*}
	and
	\begin{equation*}
		\begin{split}
			\PP \left[ \sum_{k=1}^N \sum_{n\geq 0} \U_n^{(k)} > x \right]
			& \leq \PP \left[ \sum_{n \geq 1} \left|\sum_{k=1}^N (\U_{n}^{(k)} - \rho_{n}\U^{(k)}_{n-1})\right|\Rb_{n+1} > x/2 \right] + \PP [N \Rb_1 > x/2].
		\end{split}
	\end{equation*}	
	
	Observe that for any $n \geq 1$, $\Rb_{n+1}$ is independent of $(\U_{n}^{(k)} - \rho_{n}\U^{(k)}_{n-1})$. Since $\sum_{n\geq 1} n^{-2} \leq 2$, we have for any $x>0$,
	\begin{multline*}
		\PP \left[ \sum_{n \geq 1} \left|\sum_{k=1}^N (\U_{n}^{(k)} - \rho_{n}\U^{(k)}_{n-1})\right|\Rb_{n+1} > x \right]
		\leq \sum_{n \geq 1} \PP \left[ \left|\sum_{k=1}^N (\U_{n}^{(k)} - \rho_{n}\U^{(k)}_{n-1})\right|\Rb_{n+1} > x/2n^2 \right] \\
		\begin{aligned}
			& = \sum_{n \geq 1} \int_{[0,\infty)} \P[\Rb_{n+1} > x/2tn^2] \PP \left[ \left|\sum_{k=1}^N (\U_{n}^{(k)} - \rho_{n}\U^{(k)}_{n-1})\right| \in dt\right] \\
			& \leq C_\gamma \, \sum_{n \geq 1} \int_{[0,\infty)} (x/2tn^2)^{-\gamma} \PP \left[ \left|\sum_{k=1}^N (\U_{n}^{(k)} - \rho_{n}\U^{(k)}_{n-1})\right| \in dt\right] \\
			& = 2^{\gamma} C_\gamma \, x^{-\gamma} \sum_{n \geq 1} n^{2\gamma} \EE \left|\sum_{k=1}^N (\U_{n}^{(k)} - \rho_{n}\U^{(k)}_{n-1})\right|^\gamma,
		\end{aligned}
	\end{multline*}
	where the second inequality follows from \eqref{eq:Rabarbarleq}.
	
	The relations \eqref{Uevolution} imply that for any fixed $n$, under $\Po$, $\sum_{k=1}^N (\U_{n}^{(k)} - \rho_{n}\U^{(k)}_{n-1})$ is a~sum of independent centered variables; in particular, using formulae \eqref{Uevolution}, we obtain
	\begin{equation*}
		\begin{split}
			\Eo \left(\sum_{k=1}^N (\U_{n}^{(k)} - \rho_{n}\U^{(k)}_{n-1})\right)^2
			&= N \Eo (\U_{n} - \rho_{n}\U_{n-1})^2 \\
			& = N (\rho_n + 2\xi_n\rho_n^2 - \rho_n^2) \Eo\U_{n-1} \\
			& = N (\rho_n + 2\xi_n\rho_n^2 - \rho_n^2)\Pi_{1,n-1}.
		\end{split}
	\end{equation*}
	Therefore, conditional Jensen's inequality and subadditivity of the function $x \mapsto x^{\gamma/2}$ (recall $\gamma \leq 2$) give
	\begin{equation*}
		\begin{split}
			\sum_{n \geq 1} n^{2\gamma} \EE \left|\sum_{k=1}^N (\U_{n}^{(k)} - \rho_{n}\U^{(k)}_{n-1})\right|^\gamma
			& \leq \sum_{n \geq 1} n^{2\gamma} \E \left(\Eo\left(\sum_{k=1}^N (\U_{n}^{(k)} - \rho_{n}\U^{(k)}_{n-1})\right)^2\right)^{\gamma/2} \\
			& = N^{\gamma/2} \sum_{n \geq 1} n^{2\gamma} \E ((\rho_n + 2\xi_n\rho_n^2 - \rho_n^2)\Pi_{1,n-1})^{\gamma/2} \\
			& \leq N^{\gamma/2} \sum_{n \geq 1} n^{2\gamma} (\E\rho^{\gamma/2} + 2\E\xi^{\gamma/2}\rho^\gamma)(\E\rho^{\gamma/2})^{n-1}.
		\end{split}
	\end{equation*}
	Observe that under $(\Gamma)$, by Remark \ref{rem:r}, $\E\rho^{\gamma/2} < 1$. In particular, the series is convergent and thus for a~constant $C>0$,
	\begin{equation*}
		\begin{split}
			\PP \left[ \sum_{n \geq 1} \left|\sum_{k=1}^N (\U_{n}^{(k)} - \rho_{n}\U^{(k)}_{n-1})\right|\Rb_{n+1} > x \right]
			& \leq 2^{\gamma} C_\gamma \, x^{-\gamma} \sum_{n \geq 1} n^{2\gamma} \EE \left|\sum_{k=1}^N (\U_{n}^{(k)} - \rho_{n}\U^{(k)}_{n-1})\right|^\gamma \\
			& \leq C N^{\gamma/2} x^{-\gamma},
		\end{split}
	\end{equation*}
	which proves \eqref{eq:Nimmsumdiff}. Invoking \eqref{eq:Rabarbarleq} once again, we conclude that
	\begin{equation*}
		\begin{split}
			\PP \left[ \sum_{k=1}^N \sum_{n\geq 0} \U_n^{(k)} > x \right]
			& \leq \PP \left[ \sum_{n \geq 1} \left(\sum_{k=1}^N (\U_{n}^{(k)} - \rho_{n}\U^{(k)}_{n-1})\right)\Rb_{n+1} > x/2 \right] + \PP [N \Rb_1 > x/2] \\
			& \leq C N^{\gamma/2} (x/2)^{-\gamma} + C_\gamma N^\gamma (x/2)^{-\gamma},
		\end{split}
	\end{equation*}
	which proves \eqref{eq:Nimmsum}.
	
	To show \eqref{eq:Nimmmax}, decompose
	\begin{multline*}
		\PP \left[ \max_{j \geq 0} \sum_{k=1}^N U^{(k)}_n > x \right] = \PP \left[ \max_{n \geq 0} \max_{S_n \leq j < S_{n+1}} \sum_{k=1}^N U^{(k)}_j > x \right] \\
		\begin{aligned}
			& \leq \PP \left[ \sum_{n \geq 0} \sum_{k=1}^N \max_{S_n \leq j < S_{n+1}} U^{(k)}_j > x \right] \\
			& \leq \PP \left[ \sum_{n \geq 0} \sum_{k=1}^N \left( \U^{(k)}_n + \max_{S_n \leq j < S_{n+1}} |U^{(k)}_j-\U^{(k)}_n|\right) > x \right] \\
			& \leq \PP \left[ \sum_{k=1}^N \sum_{n \geq 0} \U^{(k)}_n > x/2 \right]
			+ \PP \left[ \sum_{n \geq 1} \sum_{k=1}^N \max_{S_{n-1} \leq j < S_{n}} |U^{(k)}_j-\U^{(k)}_{n-1}| > x/2 \right],
		\end{aligned}
	\end{multline*}
	which ensures that \eqref{eq:Nimmmax} follows from \eqref{eq:Nimmsum} and \eqref{eq:Nimmmaxdiff}. To show \eqref{eq:Nimmmaxdiff}, note that, under $\Po$, $(U_j)_{S_{n-1}\leq j < S_n}$ is a~Galton-Watson process with $\U_{n-1}$ initial particles and $Geo(1/2)$ reproduction law, evolving without immigration. Thus, by~Lemma~\ref{lem:maxGW},
	\begin{equation*}
		\Eo \left[ \max_{S_{n-1} \leq j < S_{n}} |U_j-\U_{n-1}|^2 \right] \leq 8\xi_{n} \Eo\U_{n-1} = 8\xi_{n} \Pi_{1,n-1}. 
	\end{equation*}
	Therefore
	\begin{multline*}
		\PP  \left[ \sum_{n \geq 1} \sum_{k=1}^N \max_{S_{n-1} \leq j < S_{n}} |U^{(k)}_j-\U^{(k)}_{n-1}| > x/2 \right]
		\leq \sum_{n \geq 1} \PP \left[ \sum_{k=1}^N \max_{S_{n-1} \leq j < S_n} |U^{(k)}_j-\U^{(k)}_{n-1}| > x/4n^2 \right] \\
		\begin{aligned}
			\leq & \sum_{n \geq 1} (x/4n^2)^{-\gamma} N^{\gamma/2} \E \left(\Eo \max_{S_{n-1} \leq j < S_{n}} |U_j-\U_{n-1}|^2\right)^{\gamma/2} \\
			\leq & N^{\gamma/2} x^{-\gamma} \sum_{n \geq 1} (4n)^{2\gamma} 8^{\gamma/2} \E\xi^{\gamma/2} (\E\rho^{\gamma/2})^{n-1} \\
			= & C' N^{\gamma/2} x^{-\gamma},
		\end{aligned}
	\end{multline*}
	for a~constant $C'>0$, which proves \eqref{eq:Nimmmax} and \eqref{eq:Nimmmaxdiff}.
\end{proof}

Let $Y = (Y_n)_{n \in\N}$ be a~copy of the process $(Y_n(1))_{n\in\N}$. That is, $Y$ starts with one immigrant in generation $0$ and for the next $\xi_1 - 1$ generations evolves as a~Galton-Watson process with unit immigration and reproduction law $Geo(1/2)$. The last immigrant arrives in generation $\xi_1-1$; particles there reproduce with distribution $Geo(\lambda_1)$, giving birth to the first marked generation $\Y_1 = Y_{S_1}$. From there on the process evolves without immigration, with particles in each marked generation $\Y_n = Y_{S_n}$ being born with $Geo(\lambda_n)$ distribution, and $Geo(1/2)$ in consecutive blocks of lengths given by $\xi_n-1$ for $n\geq 2$.

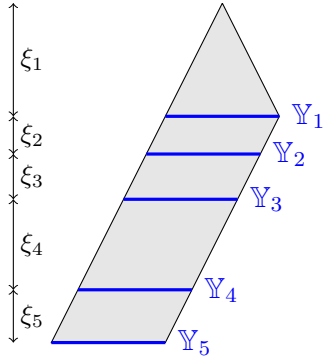
\begin{figure}[h!]
	\label{fig:bpsreYxi}
	\centering
	\begin{tikzpicture}
		[every node/.style={inner sep=0pt}]
		\draw[fill=gray!20] (0.00,-0.00) -- (-2.25,-4.50) -- (-0.75,-4.50) -- (0.75,-1.50) -- (0.00,-0.00);
		\draw[very thick, color=blue] (-0.75,-1.50) -- (0.75,-1.50)  node[right, text=blue, xshift=4] {$\Y_1$};
		\draw[<->](-2.75,-0.00) -- (-2.75,-1.50);
		\node at (-2.50,-0.75) {\small $\xi_1$};
		\draw[very thick, color=blue] (-1.00,-2.00) -- (0.50,-2.00)  node[right, text=blue, xshift=4] {$\Y_2$};
		\draw[<->](-2.75,-1.50) -- (-2.75,-2.00);
		\node at (-2.50,-1.75) {\small $\xi_2$};
		\draw[very thick, color=blue] (-1.30,-2.60) -- (0.20,-2.60)  node[right, text=blue, xshift=4] {$\Y_3$};
		\draw[<->](-2.75,-2.00) -- (-2.75,-2.60);
		\node at (-2.50,-2.30) {\small $\xi_3$};
		\draw[very thick, color=blue] (-1.90,-3.80) -- (-0.40,-3.80)  node[right, text=blue, xshift=4] {$\Y_4$};
		\draw[<->](-2.75,-2.60) -- (-2.75,-3.80);
		\node at (-2.50,-3.20) {\small $\xi_4$};
		\draw[very thick, color=blue] (-2.25,-4.50) -- (-0.75,-4.50)  node[right, text=blue, xshift=4] {$\Y_5$};
		\draw[<->](-2.75,-3.80) -- (-2.75,-4.50);
		\node at (-2.50,-4.15) {\small $\xi_5$};
	\end{tikzpicture}
	\caption{Schematic picture of the process $Y$. Horizontal blue lines represent marked generations. The immigrants arrive only in the first block.}
\end{figure}

\begin{lem}\label{lem:immmax}
	Assume $(\Gamma)$. Then for some constant $C_2$,
	\begin{equation}\label{eq:immmaxsparse}
		\PP \left[ \max_{n \geq 1} Y_n > x \right] \leq C_2 x^{-\gamma} \left( \E \left(\Eo Y_{\xi_1-1}^2\right)^{\gamma/2} + \EE \Y_1^\gamma \right).
	\end{equation}
	If additionally $\E\xi^\gamma < \infty$ and $\E\xi^\gamma\rho^\gamma < \infty$, then for some constant $C_3$,
	\begin{equation}\label{eq:immmaxdrift}
		\PP \left[ \max_{n \geq 1} Y_n > x \right] \leq C_3 x^{-\gamma}.
	\end{equation}
\end{lem}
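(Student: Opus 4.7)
The plan is to decompose $\max_{n\geq 1} Y_n$ according to the structure of $Y$: the first block $0\leq n < S_1$, in which $Y$ evolves with unit immigration as a Galton--Watson process with $Geo(1/2)$ reproduction, versus the generations $n\geq S_1$, where no new immigrants arrive and the process continues as a branching process in sparse environment started from $\Y_1$ particles. A union bound gives
\[
\PP\left[\max_{n\geq 1} Y_n > x \right] \leq \PP\left[\max_{1\leq n < S_1} Y_n > x \right] + \PP\left[\max_{n \geq S_1} Y_n > x \right],
\]
and I would treat the two summands separately.

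For the first summand, under $\Po$ the process $(Y_n)_{n<S_1}$ is a non-negative submartingale, so Doob's maximal inequality gives $\Eo\max_{n<S_1}Y_n^2 \leq 4\,\Eo Y_{\xi_1-1}^2$; Markov's inequality together with conditional Jensen (valid as $\gamma/2\leq 1$) then yields a bound of the form $C\,x^{-\gamma}\,\E(\Eo Y_{\xi_1-1}^2)^{\gamma/2}$. For the second summand, the key point is that $\Y_1$ is measurable with respect to $(\xi_1,\lambda_1)$ and the reproduction in the first block, hence independent of the environment $(\xi_k,\lambda_k)_{k\geq 2}$; conditionally on $\Y_1=N$, the process $(Y_{S_1+m})_{m\geq 0}$ is distributed as $(\sum_{k=1}^N U^k_m)_{m\geq 0}$ for i.i.d.\ copies $U^k$ of $U$ evolving in an environment with the original law. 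Since $\sum_k U^k_0 = N$, we have $\max_{n\geq S_1}Y_n \leq \max\bigl(\Y_1,\, \max_{m\geq 1}\sum_{k=1}^{\Y_1}U^k_m\bigr)$, and combining \eqref{eq:Nimmmax} applied conditionally on $\Y_1$ with a Markov bound on $\{\Y_1 > x\}$ yields a bound of the form $C'\,x^{-\gamma}\,\EE\Y_1^\gamma$. Together the two bounds produce \eqref{eq:immmaxsparse}.

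For \eqref{eq:immmaxdrift} it remains to verify that both expectations on the right of \eqref{eq:immmaxsparse} are finite under the additional hypotheses. Lemma \ref{lem:maxGW} with $N=\xi_1-1$, $x_0=1$ gives $\Eo Y_{\xi_1-1}^2 \leq C\xi_1^2$ and hence $\E(\Eo Y_{\xi_1-1}^2)^{\gamma/2} \leq C^{\gamma/2}\,\E\xi^\gamma<\infty$. For $\EE\Y_1^\gamma$, conditioning on $Y_{\xi_1-1}$ and using the first two moments of $Geo(\lambda_1)$ yields $\Eo\Y_1^2 \leq C(\rho_1^2\xi_1^2 + \rho_1\xi_1)$; subadditivity of $t\mapsto t^{\gamma/2}$ together with the assumption $\E\rho^\gamma\xi^\gamma<\infty$, and Cauchy--Schwarz (using $\E\rho^\gamma\leq 1$ and $\E\xi^\gamma<\infty$ to handle the mixed term $\E\rho^{\gamma/2}\xi^{\gamma/2}$), gives $\EE\Y_1^\gamma<\infty$. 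The main delicacy throughout is invoking \eqref{eq:Nimmmax} only after conditioning on $\Y_1$, so that the i.i.d.\ block structure of the environment beyond $S_1$ lets the $U^k$ be realised as independent of each other and of $\Y_1$ in a single environment with the correct law; once this is in place, the rest follows from Lemma \ref{lem:Nimmigrants} and Lemma \ref{lem:maxGW}.
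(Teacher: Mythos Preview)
Your proof is correct and follows essentially the same route as the paper: the same split at $S_1$, Doob's $L^2$ inequality followed by Markov and conditional Jensen for the first block, and conditioning on $\Y_1$ to feed into \eqref{eq:Nimmmax} for the tail after $S_1$. Your extra care in separating the $m=0$ term and in invoking Cauchy--Schwarz for $\E\rho^{\gamma/2}\xi^{\gamma/2}$ is a minor refinement of what the paper does implicitly, but the structure and ideas are the same.
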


\begin{proof}
	We have
	\begin{equation}\label{eq:Ydecomp}
		\PP \left[ \max_{n \geq 1} Y_n > x \right]
		\leq \PP \left[ \max_{n < S_1} Y_n > x \right] 
		+ \PP \left[ \max_{n \geq S_1} Y_n > x \right].
	\end{equation}
	
	For the first $\xi_1-1$ generations $Y$ evolves as a~Galton-Watson process with unit immigration and reproduction law $Geo(1/2)$, therefore $(Y_n^2)_{n < S_1}$ is a~submartingale under $\Po$. Using first Markov's, then Jensen's, and finally Doob's maximal inequality, we obtain 
	\begin{equation*}
		\PP \left[ \max_{n< S_1} Y_n > x \right]
		\leq x^{-\gamma} \EE \left(\max_{n< S_1} Y_n\right)^\gamma
		\leq x^{-\gamma} \E \left( \Eo \max_{n < \xi_1} Y_{n}^2\right)^{\gamma/2}
		\leq x^{-\gamma} \E \left( 4 \Eo Y_{\xi_1-1}^2\right)^{\gamma/2}.
	\end{equation*}
	If additionally $\E\xi^\gamma < \infty$, then we may estimate the penultimate term using the second part of Lemma~\ref{lem:maxGW}. Since $Y_0 = 0$, we have
	\begin{equation*}
		\Eo \max_{n < \xi_1} Y_{n}^2 \leq 16 \xi_1^2,
	\end{equation*}
	thus
	\begin{equation*}
		\PP \left[ \max_{n< S_1} Y_n > x \right] \leq 16^{\gamma/2} \E\xi^\gamma x^{-\gamma} .
	\end{equation*}
	
	To estimate the second term in \eqref{eq:Ydecomp}, observe that
	\begin{equation*}
		\left(Y_{S_1 + j}\right)_{j \in \N} \od \left(\sum_{k=1}^{\Y_1} U_j^{(k)}\right)_{j\in\N},
	\end{equation*}
	where the $U^{(k)}$'s are (independent under $\Po$) copies of the process $U$, independent of $\Y_1$ under $\PP$. Conditioning on $\Y_1$ and using the second part of Lemma~\ref{lem:Nimmigrants}, we obtain
	\begin{equation*}
		\PP \left[ \max_{n \geq S_1} Y_n > x\right] \leq C_1 \EE\Y_1^{\gamma} x^{-\gamma},
	\end{equation*}
	which concludes the proof of the first part of the lemma. If $\E\xi^\gamma\rho^\gamma < \infty$, we may estimate $\EE\Y_1^\gamma$. Under $\Po$,
	\begin{equation*}
		\Y_1 \od \sum_{k=1}^{Y_{\xi_1-1} + 1} G_k,
	\end{equation*}
	where $G_k \sim Geo(\lambda_1)$ are independent of $Y_{\xi_1-1}$ and each other. Moreover, as was explained in the proof of Lemma~\ref{lem:maxGW}, $Y_{\xi_1-1} \sim Geo(1/\xi_1)$ under $\Po$. In particular, $\Eo Y_{\xi_1-1} = \xi_1-1$ and $\Eo Y_{\xi_1-1}^2 = 2\xi_1^2 - 3\xi_1+1$, therefore
	\begin{equation*}
		\Eo\Y_1^2 = \Eo\left[(Y_{\xi_1-1} + 1) (2\rho_1^2 + \rho_1) + (Y_{\xi_1-1}^2 + Y_{\xi_1-1}) \rho_1^2\right] = 2\xi_1^2\rho_1^2 + \xi_1\rho_1.
	\end{equation*}
	Jensen's inequality and subadditivity of the function $x \mapsto x^{\gamma/2}$ give
	\begin{equation*}
		\EE\Y_1^\gamma \leq \E\left(\Eo\Y_1^2\right)^{\gamma/2} \leq 2^{\gamma/2}\E\xi^\gamma\rho^\gamma + \E\xi^{\gamma/2}\rho^{\gamma/2} < \infty,
	\end{equation*}
	which proves \eqref{eq:immmaxdrift}.
	
\end{proof}

\section{Proof of Theorem \ref{thm:A}}\label{sec:thmA}

Throughout this section we work under assumptions $(A)$. In the proof of Theorem \ref{thm:A} we will use the fact that the extinctions divide the process $Z$ into independent fragments. That is, we first determine tail asymptotics of the maximum up to time $S_{\tau_1}$.

For any $A>0$ denote $\sigma(A) = \inf\{n: \Z_n \geq A\}$. The next lemma is an analogue of Lemma~4 in \cite{kesten1975limit} and can be proved the very same way, that is by examining $\Eo[\Z_k^{\alpha} | \Z_{k-1}]$ using methods we've seen in previous proofs.

\begin{lem}\label{lem:Zsigmamoment}
	For any fixed $A>0$, $0 < \EE[\Z_{\sigma(A)}^\alpha \1_{\sigma(A) < \tau_1} ] < \infty$.
\end{lem}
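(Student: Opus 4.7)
The plan is to mimic the one-step-and-sum argument of Lemma~4 in \cite{kesten1975limit}, adapted to the branching-in-sparse-environment setting.

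The lower bound is immediate: on $\{\sigma < \tau_1\}$ one has $\Z_\sigma \geq A$, so $\EE[\Z_\sigma^\alpha \1_{\sigma < \tau_1}] \geq A^\alpha \PP[\sigma < \tau_1]$. Since $\Z_1 \geq G^{(1)} \overset{d}{=} Geo(\lambda_1)$ and $\lambda_1 < 1$ almost surely, $\PP[\Z_1 \geq A] \geq \EE[(1-\lambda_1)^A] > 0$, and the event $\{\Z_1 \geq A\}$ is contained in $\{\sigma \leq 1 < \tau_1\}$, which gives positivity.

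For the upper bound, I would decompose on $\{\sigma = n\}$ and overcount:
$$\EE[\Z_\sigma^\alpha \1_{\sigma < \tau_1}] \leq \sum_{n \geq 1} \EE[\Z_n^\alpha \1_{E_n}], \quad E_n := \{\Z_1, \dots, \Z_{n-1} \in (0, A)\}.$$
The set $E_n$ is measurable with respect to the natural filtration generated by the process and the environment through block $n-1$, and is contained in $\{\tau_1 \geq n\}$. Conditioning inside $E_n$ on that filtration, I would write $\Z_n = \sum_{j=1}^{V+1} G^{(j)}$ with $G^{(j)} \overset{d}{=} Geo(\lambda_n)$ i.i.d.\ and $V$ the population at generation $S_n - 1$. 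Splitting $V = X + I$ into descendants of the initial $\Z_{n-1}$ particles and the progeny of the $\xi_n - 1$ immigrants arriving within the block, the martingale property of $X$ and the $Geo(1/2)$ variance formula give $\Eo V \leq \Z_{n-1} + \xi_n$ and $\varo V \leq 2\xi_n \Z_{n-1} + \xi_n^2$, hence $\Eo V^2 \leq C(\Z_{n-1}^2 + \xi_n \Z_{n-1} + \xi_n^2)$. Combining this with Jensen (for $\alpha \leq 1$) or with the decomposition $\Z_n = (V+1)\rho_n + W$ and the conditional variance $\Eo[W^2 \mid V] = (V+1)(\rho_n + \rho_n^2)$ together with $|a+b|^\alpha \leq 2^{\alpha - 1}(|a|^\alpha + |b|^\alpha)$ (for $\alpha \in (1,2)$) yields the pointwise one-step estimate
$$\Eo \Z_n^\alpha \leq C \bigl[\rho_n^\alpha(\Z_{n-1}^\alpha + \xi_n^\alpha + (\xi_n \Z_{n-1})^{\alpha/2}) + (\rho_n^{\alpha/2} + \rho_n^\alpha)(\Z_{n-1}^{\alpha/2} + \xi_n^{\alpha/2})\bigr].$$

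On $E_n$ we have $\Z_{n-1} < A$, and $E_n$ is independent of $(\rho_n, \xi_n)$ under $\PP$ because the environment is i.i.d.\ across blocks. Integration thus gives $\EE[\Z_n^\alpha \1_{E_n}] \leq K(A)\, \PP[E_n]$ with $K(A) < \infty$: every moment appearing, namely $\E \rho^\alpha$, $\E \rho^\alpha \xi^\alpha$, $\E \rho^\alpha \xi^{\alpha/2}$, $\E \rho^{\alpha/2}$ and $\E \rho^{\alpha/2}\xi^{\alpha/2}$, is finite under assumption $(A)$ (the last few by Jensen and $\xi \geq 1$). Finally, $\sum_n \PP[E_n] \leq \sum_n \PP[\tau_1 \geq n] = \EE \tau_1 < \infty$ by Lemma~\ref{lem:tau}. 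The main technical burden is the one-step inequality for $\Eo \Z_n^\alpha$: one must carefully split the block-$n$ dynamics into the initial-particle and immigrant parts and track each cross-moment $\rho^a \xi^b$ produced by this splitting, to make sure it is controlled by the moment hypotheses of $(A)$; everything else is bookkeeping.
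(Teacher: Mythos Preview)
Your proposal is correct and follows precisely the route the paper indicates: the paper does not give a detailed proof but says the lemma ``can be proved the very same way [as Lemma~4 in \cite{kesten1975limit}], that is by examining $\Eo[\Z_k^{\alpha} \mid \Z_{k-1}]$,'' and your one-step bound on $\Eo\Z_n^\alpha$ followed by summing over $n$ against $\PP[E_n]\le\PP[\tau_1\ge n]$ is exactly that argument, with the cross-moments $\E\rho^\alpha\xi^\alpha$, $\E\rho^\alpha\xi^{\alpha/2}$, $\E\rho^{\alpha/2}\xi^{\alpha/2}$ all controlled under $(A)$ as you note.
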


The main proof strategy is as follows: we choose sufficiently big $A$ and argue that neither the particles living before time $S_{\sigma(A)}$, nor the descendants of the immigrants arriving after this time contribute significantly to the examined maximum. Therefore its behaviour is determined by $\Z_{\sigma(A)}$ particles in the generation $S_{\sigma(A)}$ and their progeny.

Let us first take care of the particles alive before time $S_{\sigma(A)}$.

\begin{lem}\label{lem:beforesigma}	
	For any fixed $A$,
	\begin{equation*}
		\PP\left[\max_{n \leq S_{\sigma(A)} \wedge S_{\tau_1}} Z_n > x \right] = o(x^{-\alpha}).
	\end{equation*}
\end{lem}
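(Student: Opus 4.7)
The plan is to decompose the range $[0, S_\sigma \wedge S_{\tau_1})$ block by block. For each $k \geq 1$, set
$$M_k := \max_{S_{k-1} \leq n < S_k} Z_n,$$
so that
$$\max_{n < S_\sigma \wedge S_{\tau_1}} Z_n \;\leq\; \max_{1 \leq k \leq \sigma \wedge \tau_1} M_k.$$
The key observation is that on $\{k \leq \sigma \wedge \tau_1\}$, the starting population of the $k$-th block satisfies $\Z_{k-1} < A$ by definition of $\sigma$. Inside block $k$, the process $(Z_{S_{k-1}+j})_{0 \leq j \leq \xi_k - 1}$ evolves (conditionally on $\Z_{k-1}$) as a Galton--Watson process with $Geo(1/2)$ reproduction and unit immigration starting from $\Z_{k-1}$ particles, so Lemma \ref{lem:maxGW} will provide the crucial second-moment control.

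To upgrade from an $O(x^{-\alpha})$ bound to $o(x^{-\alpha})$, I would pick an exponent $\alpha' \in (\alpha, \alpha + \delta]$ with $\alpha' \leq 2$. Under assumptions $(A)$ we have $\E\xi^{\alpha'} < \infty$ (since $\alpha' \leq (\alpha + \delta) \vee 1$). A union bound combined with Markov's inequality yields
$$\PP\left[\max_{n < S_\sigma \wedge S_{\tau_1}} Z_n > x\right] \;\leq\; \sum_{k \geq 1} x^{-\alpha'}\, \EE\!\left[M_k^{\alpha'}\, \1_{\{k \leq \tau_1,\, \Z_{k-1} < A\}}\right].$$

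For each $k$, the event $\{k \leq \tau_1,\, \Z_{k-1} < A\}$ is $\mathcal{F}_{k-1}$-measurable (sigma-algebra of environment and process up to $S_{k-1}$), and $\xi_k$ is independent of $\mathcal{F}_{k-1}$. Conditioning on $\mathcal{F}_{k-1}$ and $\xi_k$, Lemma \ref{lem:maxGW} gives
$$\Eo[M_k^2] \;\leq\; 16\bigl((\xi_k-1)^2 + (\xi_k-1)\,\Z_{k-1} + \Z_{k-1}^2\bigr),$$
which on $\{\Z_{k-1} < A\}$ is bounded by $C_A (\xi_k^2 + 1)$ for some constant $C_A$ depending on $A$. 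Since $\alpha'/2 \leq 1$, conditional Jensen gives $\EE[M_k^{\alpha'} \mid \mathcal{F}_{k-1}, \xi_k] \leq (\Eo M_k^2)^{\alpha'/2}$, and subadditivity of $x \mapsto x^{\alpha'/2}$ together with the independence of $\xi_k$ from $\mathcal{F}_{k-1}$ yields
$$\EE\!\left[M_k^{\alpha'}\, \1_{\{k \leq \tau_1,\, \Z_{k-1} < A\}}\right] \;\leq\; C'_A\, \PP[k \leq \tau_1],$$
where $C'_A$ is finite and depends only on $A$.

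Summing over $k$ and using Lemma \ref{lem:tau} (which under $(A)$ yields $\sum_{k \geq 1} \PP[k \leq \tau_1] = \EE\tau_1 < \infty$) gives
$$\PP\left[\max_{n < S_\sigma \wedge S_{\tau_1}} Z_n > x\right] \;\leq\; C''_A\, x^{-\alpha'},$$
and since $\alpha' > \alpha$, this is indeed $o(x^{-\alpha})$. The argument is essentially a routine moment computation; there is no serious obstacle beyond picking the right exponent $\alpha'$ and threading through the measurability so that the $\xi_k$-independence of the past can be used to factorize the expectation.
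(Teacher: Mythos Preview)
Your proof is correct and in fact slightly cleaner than the paper's. Both arguments rest on the same two ingredients: the bound $\Z_{k-1}<A$ on $\{k\le\sigma\}$, and Lemma~\ref{lem:maxGW} applied at an exponent $\alpha'>\alpha$ with $\alpha'\le(\alpha+\delta)\vee 1$ so that $\E\xi^{\alpha'}<\infty$. The difference is in how the sum over blocks is controlled. The paper truncates at level $x^{\delta/2}$, writing
\[
\PP\Bigl[\max_{n < S_\sigma \wedge S_{\tau_1}} Z_n > x \Bigr]
\le x^{\delta/2}\,\PP[M_1^A>x]+\PP[\tau_1>x^{\delta/2}],
\]
and then needs the exponential moment of $\tau_1$ from Lemma~\ref{lem:tau} to kill the second term. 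You instead keep the indicator $\1_{\{k\le\tau_1\}}$ inside the expectation, factorize using the independence of $\xi_k$ from $\mathcal F_{k-1}$, and sum $\sum_{k\ge1}\PP[\tau_1\ge k]=\EE\tau_1<\infty$. This avoids the truncation step entirely and uses only the first-moment part of Lemma~\ref{lem:tau}. The paper's route is perhaps more in the spirit of Kesten et al., but yours is the more economical argument here.
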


\begin{proof}
	Fix $A>0$ and put $\sigma = \sigma(A)$. First, observe that
	\begin{equation*}
		x^\alpha \PP\left[\Z_{\sigma} > x, \,  \sigma < \tau_1\right] = \EE\left[ x^\alpha \1_{\Z_\sigma > x, \sigma < \tau_1} \right] \leq \EE\left[ \Z_\sigma^\alpha \1_{\Z_\sigma > x, \sigma < \tau_1} \right] \to 0
	\end{equation*}
	as $x\to \infty$ by Lemma~\ref{lem:Zsigmamoment} and the dominated convergence theorem. Since $\Z_{\tau_1} = 0$ by definition, this implies
	\begin{equation*}
		\PP\left[ Z_{S_\sigma \wedge S_{\tau_1}} > x \right] = o(x^{-\alpha}).
	\end{equation*}
	
	Next, let $x > A$. The only generations before time $S_\sigma$ in which the population size may exceed $x$ are the unmarked ones. However, since $\Z_k < A$ for $k < \sigma$, the maximum of $Z$ in generations $S_{k-1}+1, \dots S_{k}-1$ is stochastically dominated by $M_k^A$, the maximum of Galton-Watson process with $Geo(1/2)$ offspring distribution, unit immigration and $A$ initial particles, evolving for time $\xi_{k}$. Observe that
	\begin{equation*}
		\begin{split}
			\PP\left[\max_{n < S_\sigma \wedge S_{\tau_1}} Z_n > x \right]
			& \leq \PP\left[\max_{k < x^{\delta/2}} M_k^A > x \right] + \PP\left[ \tau_1 > x^{\delta/2} \right] \\
			& \leq x^{\delta/2} \PP\left[M_1^A > x \right] + \PP\left[ \tau_1 > x^{\delta/2} \right].
		\end{split}
	\end{equation*}
	Since $\alpha+\delta \leq 2$, by Markov's and Jensen's inequalities,
	\begin{equation*}
		\PP\left[ M_1^A > x \right] \leq x^{-\alpha -\delta} \E \left(\Eo (M_1^A)^2 \right)^{(\alpha+\delta)/2}.
	\end{equation*}
	The second part of Lemma~\ref{lem:maxGW} implies that
	\begin{equation*}
		\Eo (M_1^A)^2 \leq 16(\xi_1^2 + A\xi_1 + A^2)
	\end{equation*}
	and thus, since $x \mapsto x^{(\alpha+\delta)/2}$ is subadditive,
	\begin{equation*}
		x^{\delta/2} \PP\left[M_1^A > x \right] \leq x^{-\alpha - \delta/2} 16^{(\alpha+\delta)/2}\left( \E\xi^{\alpha + \delta} + A^{(\alpha+\delta)/2}\E\xi^{(\alpha + \delta)/2} + A^{\alpha+\delta}\right)
		= o(x^{-\alpha}).
	\end{equation*}
	The second term may be bounded using Lemma~\ref{lem:tau}, that is
	\begin{equation*}
		\PP\left[ \tau_1 > x^{\delta/2} \right] \leq e^{-c x^{\delta/2}} \EE e^{c\tau_1} = o(x^{-\alpha}),
	\end{equation*}
	which ends the proof.
	
\end{proof}

The next lemma assures that the contribution of progeny of immigrants arriving after $S_{\sigma(A)}$ is negligible. Recall that $Y(k)$ counts the progeny of immigrants arriving in the $k$'th block, that is in generations $S_{k-1}, S_{k-1}+1, \dots S_k -1$.

\begin{lem}\label{lem:aftersigma}
	Fix $\eps > 0$. There exists $A_1(\eps)$ such that for $A > A_1(\eps)$,
	\begin{equation}\label{eq:maxaftersigma}
		\PP\left[ 2 \sum_{k=\sigma(A)+1}^{\tau_1} \max_{n \geq 1} Y_n(k) > \eps x \right] \leq \eps x^{-\alpha}.
	\end{equation}
\end{lem}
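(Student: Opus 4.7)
The plan is to apply Lemma~\ref{lem:immmax}, equation~\eqref{eq:immmaxdrift}, which under assumptions~$(A)$ yields $\PP[\max_{n\geq 1}Y_n>y]\leq C_3 y^{-\alpha}$. One readily checks that $(A)$ implies $(\Gamma)$ with $\gamma=\alpha$ together with the extra hypotheses $\E\xi^\alpha<\infty$ and $\E\xi^\alpha\rho^\alpha<\infty$ needed for~\eqref{eq:immmaxdrift}: $\E\rho^\alpha=1$ and the Kesten--Goldie theorem give \eqref{eq:Rabarbarleq}, $\E\xi^{\alpha/2}\leq\E\xi^{\alpha+\delta}<\infty$, $\E\rho^\alpha\xi^{\alpha/2}\leq\E\rho^\alpha\xi^\alpha<\infty$ (using $\xi\geq 1$), and the remaining conditions are part of $(A)$.

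The key structural observation is that $\{\sigma<k\leq\tau_1\}$ depends only on the environment and reproductions in blocks $1,\dots,k-1$, since $\Z_j$ for $j\leq k-1$ is a function of the $Y^i_{S_j-S_{i-1}}$ with $i\leq j$, each of which depends only on blocks $i,\dots,j\subseteq\{1,\dots,k-1\}$. On the other hand, $Y^k$ depends only on blocks $k,k+1,\dots$, so by independence of the i.i.d.\ blocks, $\1_{\sigma<k\leq\tau_1}$ is independent of $\max_n Y^k_n\overset{d}{=}\max_n Y_n$ under~$\PP$. Moreover, $\{\sigma<\tau_1\}\subseteq\{\Z_\sigma\geq A\}$ gives, by Lemma~\ref{lem:Zsigmamoment}, $\PP[\sigma<\tau_1]\leq A^{-\alpha}\,\EE[\Z_\sigma^\alpha\1_{\sigma<\tau_1}]$; combined with Cauchy--Schwarz and the exponential moments of $\tau_1$ from Lemma~\ref{lem:tau}, this yields $\EE[(\tau_1-\sigma)_+^r]\leq c_r A^{-\alpha/2}$ for every $r\geq 1$.

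I would then decompose $\sum_{k=\sigma+1}^{\tau_1}\max_n Y^k_n\leq M+R$ with $M=\max_{\sigma<k\leq\tau_1}\max_n Y^k_n$ and $R=\sum_{k=\sigma+1}^{\tau_1}(\max_n Y^k_n)\1_{\max_n Y^k_n\leq\eps x/2}$, so that $\{\sum>\eps x\}\subseteq\{M>\eps x/2\}\cup\{R>\eps x/2\}$. A union bound combined with the independence above gives
$$\PP\!\left[M>\tfrac{\eps x}{2}\right]\leq\sum_{k\geq 1}\PP[\sigma<k\leq\tau_1]\,\PP\!\left[\max_n Y_n>\tfrac{\eps x}{2}\right]\leq C_3(2/\eps)^\alpha\,\EE[(\tau_1-\sigma)_+]\,x^{-\alpha},$$
which is $\leq(\eps/2)x^{-\alpha}$ once $A$ is large. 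For $R$ I would apply Markov's inequality in $L^p$ for some $p\in(\max(\alpha,1),2]$; the truncated moment $\EE[(\max_n Y_n)^p\1_{\max_n Y_n\leq\eps x/2}]\leq C(\eps x)^{p-\alpha}$ follows from the tail bound and $\alpha<2$. Setting $X_k=(\max_n Y^k_n)\1_{\max_n Y^k_n\leq\eps x/2}$ and $\mu_L=\EE X$, the fact that each $X_k$ is independent of the blocks $1,\dots,k-1$ makes $X_k-\mu_L$ a martingale-difference sequence in the block filtration; a Burkholder--Rosenthal type estimate then gives $\EE R^p\lesssim\EE[(\tau_1-\sigma)_+]\,\EE|X|^p+\mu_L^p\,\EE[(\tau_1-\sigma)_+^p]$. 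Both summands are of order $A^{-\alpha/2}$, so Markov's inequality produces a bound $\lesssim A^{-\alpha/2}\eps^{-p}x^{-\alpha}+o(x^{-\alpha})$, making $\PP[R>\eps x/2]\leq(\eps/2)x^{-\alpha}$ for $A$ sufficiently large, and the two bounds combine to the desired estimate.

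The main obstacle is precisely this $L^p$ control of~$R$: for $j<k$ the trajectory of $Y^j$ extends into blocks beyond $j$ and therefore $\max_n Y^j_n$ is \emph{not} independent of $\{k\leq\tau_1\}$, so the clean independence argument used for $M$ fails and one cannot simply factorize the off-diagonal contributions to $\EE R^p$. The martingale-difference structure of $X_k$ in the block filtration, coupled with the uniform $A^{-\alpha/2}$ control of all moments of $(\tau_1-\sigma)_+$ derived from Lemmas~\ref{lem:tau} and~\ref{lem:Zsigmamoment}, is what ultimately bypasses this difficulty.
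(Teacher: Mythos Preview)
Your treatment of the maximum part $M$ is fine, but the control of $R$ contains a real gap. You claim that $X_k-\mu_L$ is a martingale-difference sequence in the ``block filtration'' $\F_k=\sigma((\xi_i,\lambda_i,\text{reproductions})_{i\leq k})$. It is true that $X_k$ is independent of $\F_{k-1}$, so $\EE[X_k-\mu_L\mid\F_{k-1}]=0$; however, $X_k$ is \emph{not} $\F_k$-measurable, since $\max_{n\geq 1}Y^k_n$ depends on the environment and reproductions in blocks $k,k+1,\dots$, not just block $k$. Hence $(\1_{\sigma<k\leq\tau_1}(X_k-\mu_L))_k$ is not adapted, and Burkholder--Rosenthal does not apply. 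You yourself flag that for $j<k$ the trajectory of $Y^j$ extends into later blocks; the same forward-looking feature of $Y^k$ is exactly what destroys adaptedness. One could try to salvage this by working under $\Po$ with the filtration generated by $(Y^1,\dots,Y^k)$ and centring at the quenched means $\Eo X_k$, but then integrating over the environment reintroduces correlations between $\Po[\sigma<k\leq\tau_1]$ and $\Eo|X_k|^p$, and the argument is no longer the clean two-line step you describe.

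The paper avoids all of this with a much simpler device: since $\sum_{k\geq 1}1/(2k^2)<1$, one has
\[
\Big\{\sum_{k=\sigma+1}^{\tau_1}\max_n Y^k_n>\eps x\Big\}\subseteq\bigcup_{k\geq 1}\Big\{\sigma\leq k<\tau_1,\ \max_n Y^{k+1}_n>\frac{\eps x}{2k^2}\Big\}.
\]
Now $\{\sigma\leq k<\tau_1\}\in\sigma(Z_1,\dots,Z_{S_k})$ is independent of $Y^{k+1}$, and~\eqref{eq:immmaxdrift} gives the tail bound. Summing and using $\sum_k k^{2\alpha}\PP[\tau_1\1_{\sigma<\tau_1}>k]\leq(2\alpha+1)^{-1}\EE[\tau_1^{2\alpha+1}\1_{\sigma<\tau_1}]$ yields a bound proportional to $\eps^{-\alpha}x^{-\alpha}\EE[\tau_1^{2\alpha+1}\1_{\sigma<\tau_1}]$, and the last expectation tends to~$0$ as $A\to\infty$ by dominated convergence (Lemma~\ref{lem:tau} gives $\EE\tau_1^{2\alpha+1}<\infty$ and $\sigma(A)\topr\infty$). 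This bypasses the $M/R$ split, the truncation, and any moment or martingale estimate for the sum.
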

\begin{proof}
	For any $A>0$, with $\sigma=\sigma(A)$,
	\begin{equation*}
		\begin{split}
			\PP\left[ 2 \sum_{k=\sigma+1}^{\tau_1} \max_{n \geq 1} Y_n(k) > \eps x \right]
			& = \PP \left[ \sum_{k=1}^\infty \1_{\sigma \leq k < \tau_1} \max_{n \geq 1} Y_n(k+1) > \eps x/2 \right] \\
			& \leq \sum_{k=1}^\infty \PP \left[\sigma \leq k < \tau_1, \max_{n \geq 1} Y_n(k+1) > \eps x / 4k^2 \right].
		\end{split}
	\end{equation*}
	Observe that the event $\{\sigma \leq k < \tau_1\}$ is defined in terms of $Z_1, \dots Z_{S_k}$, while the process $Y(k+1)$ evolves in the environment given by $(\xi_j, \rho_j)$ for $j \geq k+1$, hence is independent of $Z_1, \dots Z_{S_k}$. Moreover, the second part of Lemma~\ref{lem:immmax} applied with $\gamma=\alpha$ gives tail bounds on the maximum of $Y(k+1)$. That is,
	\begin{multline*}
		\sum_{k=1}^\infty \PP \left[ \sigma \leq k < \tau_1,  \max_{n \geq 1} Y_n(k+1) > \eps x / 4k^2 \right] \\
		\begin{aligned}
			& = \sum_{k=1}^\infty \PP \left[\sigma \leq k < \tau_1\right] \PP \left[\max_{n \geq 1} Y_n(k+1) > \eps x / 4k^2 \right] \\
			& \leq C_3 \sum_{k=1}^\infty \PP \left[\sigma \leq k < \tau_1\right] (\eps x / 4k^2)^{-\alpha} \\
			& = C_3 4^{\alpha} (\eps x)^{-\alpha} \EE\left[ \sum_{k=\sigma}^{\tau_1-1} k^{2\alpha}\1_{\sigma < \tau_1}\right] \\
			& \leq C_3 4^{\alpha} \eps^{-\alpha} x^{-\alpha} \EE\left[ \tau_1^{2\alpha + 1} \1_{\sigma < \tau_1} \right].
		\end{aligned}
	\end{multline*}
	Since $\EE\tau_1^{2\alpha + 1} < \infty$ and $\sigma(A) \topr \infty$ as $A \to \infty$, $\EE[\tau_1^{2\alpha+1} \1_{\sigma(A)<\tau_1}] \to 0$ as $A \to \infty$ by the dominated convergence theorem. One may thus find $A_1(\eps)$ such that \eqref{eq:maxaftersigma} holds for $A>A_1(\eps)$.
	
\end{proof}

We gave bounds on the generations sizes of particles alive before time $S_{\sigma(A)}$ and those coming from immigrants arriving after that time. What is left is investigating behaviour of the particles residing exactly in generation $S_{\sigma(A)}$ and their progeny. First, we show that the maximal generation size among these is controlled by the initial number of particles and the maxima of the potential $\Psi$ defined in \eqref{def:psi}. Then we show that for any $A>0$, the latter has regularly varying tails with index $-\alpha$.

For $k\geq S_\sigma$, where $\sigma = \sigma(A)$ for fixed $A>0$, let $V_{\sigma, k}$ be the number of progeny of the particles from generation $S_\sigma$ residing in generation $k$ and let $\V_{\sigma,n} = V_{\sigma,S_n}$; in particular, $\Z_\sigma = \V_{\sigma,\sigma}$. Recall the variables $\Psi_{m,k}$ defined in \eqref{def:psi}.

\begin{lem}\label{lem:Zsigmaprog}
	For any $\eps > 0$ there exists $A_2(\eps)$ such that for $A > A_2(\eps)$,
	\begin{equation*}
		\PP \left[ \left|\max_{k\geq S_\sigma} (V_{\sigma,k}+V_{\sigma,k+1}) - \Z_\sigma\max_{k \geq S_\sigma} (\Psi_{\sigma+1,k} + \Psi_{\sigma+1,k+1})\right| > \eps x, \sigma < \tau_1 \right]
		\leq \eps x^{-\alpha} \EE\left[\Z_\sigma^{\alpha} \1_{\sigma < \tau_1}\right],
	\end{equation*}
	where $\sigma = \sigma(A)$.
\end{lem}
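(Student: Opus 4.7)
The strategy is to condition on the $\sigma$-algebra $\mathcal{F}_\sigma$ generated by the environment $\{(\xi_k,\rho_k)\}_{k \leq \sigma}$ and the process $(Z_k)_{k \leq S_\sigma}$, and then exploit the branching decomposition of the population descending from generation $S_\sigma$. On $\{\sigma < \tau_1\}$, by the branching property, conditional on $\mathcal F_\sigma$ the process $(V_{\sigma, S_\sigma+n})_{n\geq 0}$ can be realised as $\sum_{j=1}^{\Z_\sigma} \widehat U^{(j)}_n$, where the $\widehat U^{(j)}$ are i.i.d.\ copies of the single-particle progeny process $U$ evolving in the shifted environment $\{(\xi_{\sigma+k}, \rho_{\sigma+k})\}_{k \geq 1}$; by stationarity this environment is $\mathcal F_\sigma$-independent and identically distributed to the original. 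Assumptions $(A)$ yield $(\Gamma)$ with $\gamma=\alpha$, the only non-obvious condition $\E\rho^\alpha\xi^{\alpha/2}<\infty$ following from $\E\rho^\alpha\xi^\alpha<\infty$, $\E\rho^\alpha = 1$ and Cauchy--Schwarz, so Lemma~\ref{lem:Nimmigrants} applies in the shifted environment.

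The inequality $|\max_k a_k - \max_k b_k|\leq\max_k|a_k - b_k|$ together with the triangle inequality bound the quantity inside the probability by $2\max_{k\geq S_\sigma}|V_{\sigma,k}-\Z_\sigma\Psi_{\sigma+1,k}|$. Since $\Psi_{\sigma+1,\cdot}$ is constant on each block $[S_{\sigma+n},S_{\sigma+n+1})$, equal there to $\Pi_{\sigma+1,\sigma+n}$, we split into marked-generation deviations and within-block fluctuations:
\begin{align*}
\max_{k \geq S_\sigma} |V_{\sigma,k} - \Z_\sigma \Psi_{\sigma+1,k}|
& \leq \sum_{n \geq 0} |\V_{\sigma, \sigma+n} - \Z_\sigma \Pi_{\sigma+1, \sigma+n}| \\
& \quad + \sum_{n \geq 0} \sum_{j=1}^{\Z_\sigma} \max_{S_{\sigma+n} \leq k < S_{\sigma+n+1}} |\widehat U^{(j)}_{k - S_\sigma} - \widehat U^{(j)}_{S_{\sigma+n}-S_\sigma}|,
\end{align*}
where in the second sum we used the per-particle bound $|V_{\sigma,k} - \V_{\sigma,\sigma+n}|\leq\sum_j|\widehat U^{(j)}_{k-S_\sigma} - \widehat U^{(j)}_{S_{\sigma+n}-S_\sigma}|$. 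After a reindexing $n\mapsto n+1$, the first sum fits into \eqref{eq:Nimmsumdiff} (the $n=0$ term vanishes since $\V_{\sigma,\sigma} = \Z_\sigma$ and $\Pi_{\sigma+1,\sigma}=1$) and the second into \eqref{eq:Nimmmaxdiff}. Applying both with $N=\Z_\sigma$ and $\gamma=\alpha$, together with a union bound, yields, for some constant $C$ depending only on $\alpha$ and the law of $(\xi,\rho)$,
$$\PP\bigl[\,2\max_{k\geq S_\sigma} |V_{\sigma,k} - \Z_\sigma \Psi_{\sigma+1,k}| > \eps x \,\big|\, \mathcal F_\sigma,\, \sigma < \tau_1\bigr] \leq C\,\Z_\sigma^{\alpha/2}(\eps x)^{-\alpha}.$$

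Finally, on $\{\sigma<\tau_1\}$ the definition of $\sigma$ gives $\Z_\sigma \geq A$, whence $\Z_\sigma^{\alpha/2}\leq A^{-\alpha/2}\Z_\sigma^\alpha$; taking expectation over $\mathcal F_\sigma$ produces
$$\PP[\,\cdots > \eps x,\, \sigma<\tau_1\,] \leq CA^{-\alpha/2}\eps^{-\alpha}x^{-\alpha}\,\EE\bigl[\Z_\sigma^\alpha \1_{\sigma<\tau_1}\bigr],$$
and choosing $A_2(\eps)$ so that $CA^{-\alpha/2}\eps^{-\alpha}\leq\eps$ completes the proof (recall from Lemma~\ref{lem:Zsigmamoment} that $\EE[\Z_\sigma^\alpha\1_{\sigma<\tau_1}]$ is finite and positive, so the choice is meaningful). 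The main technical hurdle will be in carefully justifying the branching decomposition and the indexing shift, in particular ensuring that the $\mathcal F_\sigma$-conditional law of the shifted environment is exactly that required to invoke \eqref{eq:Nimmsumdiff} and \eqref{eq:Nimmmaxdiff} directly.
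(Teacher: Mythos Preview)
Your proof is correct and follows essentially the same route as the paper: condition on $\mathcal F_\sigma$, decompose $V_{\sigma,\cdot}$ into $\Z_\sigma$ independent copies of the single-particle process, apply \eqref{eq:Nimmsumdiff} and \eqref{eq:Nimmmaxdiff} from Lemma~\ref{lem:Nimmigrants} with $\gamma=\alpha$, and finish with $\Z_\sigma^{\alpha/2}\le A^{-\alpha/2}\Z_\sigma^{\alpha}$. The only difference is cosmetic: you reduce immediately via $|\max_k a_k-\max_k b_k|\le\max_k|a_k-b_k|\le 2\max_k|V_{\sigma,k}-\Z_\sigma\Psi_{\sigma+1,k}|$, whereas the paper first compares the two maxima block by block (handling the boundary case $k=S_{n+1}-1$, $k+1=S_{n+1}$ separately) before arriving at the same two sums; your reduction is slightly cleaner and avoids the boundary bookkeeping, but the analytic content and the final bound $C(\eps x)^{-\alpha}\EE[\Z_\sigma^{\alpha/2}\1_{\sigma<\tau_1}]$ are identical.
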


\begin{proof}
	We begin by estimating the difference of maxima within one block. Observe that the potential $\Psi$ is constant within each block, therefore for any $n \in \N$,
	\begin{equation*}
		\begin{split}
			& \left| \max_{S_n \leq k < S_{n+1}} (V_{\sigma,k} + V_{\sigma,k+1})
			- \Z_\sigma \max_{S_n \leq k < S_{n+1}}  (\Psi_{\sigma+1,k} + \Psi_{\sigma+1,k+1}) \right| \\
			& \leq
			\left| \max_{S_n \leq k < S_{n+1}-1} (V_{\sigma,k} + V_{\sigma,k+1})
			- 2\Z_\sigma \Pi_{\sigma+1,n} \right| \\
			& + |V_{\sigma,S_{n+1}-1} + V_{\sigma, S_{n+1}} - \Z_\sigma\Pi_{\sigma+1,n} - \Z_\sigma\Pi_{\sigma+1, n+1} |
		\end{split}
	\end{equation*}
	Let us estimate the first term. Since
	\begin{equation*}
		\max_{S_n \leq k < S_{n+1}-1} (V_{\sigma,k}+V_{\sigma,k+1}) = 2\V_{\sigma,n} + \max_{S_n \leq k < S_{n+1}-1} \left(V_{\sigma,k} + V_{\sigma,k+1} - 2\V_{\sigma,n}\right),
	\end{equation*}
	we have
	\begin{equation*}
		\begin{split}
			\left| \max_{S_n \leq k < S_{n+1}-1} (V_{\sigma,k} + V_{\sigma,k+1})
			- 2 \Z_\sigma \Pi_{\sigma+1,n} \right|
			& \leq 2\left(\left|\V_{\sigma,n} - \Z_\sigma\Pi_{\sigma+1,n}\right|
			+ \max_{S_n \leq k < S_{n+1}} |V_{\sigma,k} - \V_{\sigma,n}|\right).
		\end{split}
	\end{equation*}
	The second term may be estimated simply by
	\begin{equation*}
		\begin{split}
			&|V_{\sigma,S_{n+1}-1} + V_{\sigma, S_{n+1}} - \Z_\sigma\Pi_{\sigma+1,n} - \Z_\sigma\Pi_{\sigma+1, n+1} | \\
			& \leq |\V_{\sigma, n+1} - \Z_\sigma \Pi_{\sigma+1,n+1}|
			+  |\V_{\sigma,n} - \Z_\sigma \Pi_{\sigma+1,n} |
			+ |V_{\sigma, S_{n+1}-1} - \V_{\sigma,n}|,
		\end{split}
	\end{equation*}
	which gives
	\begin{equation*}
		\begin{split}
			& \left| \max_{S_n \leq k < S_{n+1}} (V_{\sigma,k} + V_{\sigma,k+1})
			- \Z_\sigma \max_{S_n \leq k < S_{n+1}}  (\Psi_{\sigma+1,k} + \Psi_{\sigma+1,k+1}) \right| \\
			& \leq 3 |\V_{\sigma,n}-\Z_\sigma\Pi_{\sigma+1,n}| + 3 \max_{S_n \leq k < S_{n+1}} |V_{\sigma,k} - \V_{\sigma,n}| + |\V_{\sigma,n+1}-\Z_\sigma\Pi_{\sigma+1,n+1}|.
		\end{split}
	\end{equation*}
	Next, in view of
	\begin{equation*}
		\begin{split}
			&\left|\max_{k\geq S_\sigma} (V_{\sigma,k}+V_{\sigma,k+1}) - \Z_\sigma \max_{k \geq S_\sigma} (\Psi_{\sigma+1,k} + \Psi_{\sigma+1,k+1}) \right| \\
			& = \left| \max_{n\geq \sigma} \max_{S_n \leq k < S_{n+1}} (V_{\sigma,k} + V_{\sigma,k+1})
			- \max_{n \geq \sigma } \Z_\sigma \max_{S_n \leq k < S_{n+1}}  (\Psi_{\sigma+1,k} + \Psi_{\sigma+1,k+1}) \right| \\
			& \leq \sum_{n \geq \sigma} \left| \max_{S_n \leq k < S_{n+1}} (V_{\sigma,k} + V_{\sigma,k+1})
			- \Z_\sigma \max_{S_n \leq k < S_{n+1}}  (\Psi_{\sigma+1,k} + \Psi_{\sigma+1,k+1}) \right|,
		\end{split}
	\end{equation*}
	the above estimations give
	\begin{equation*}
		\begin{split}
			\PP & \left[ \left|\max_{k\geq S_\sigma} (V_{\sigma,k}+V_{\sigma,k+1}) - \Z_\sigma\max_{k \geq S_\sigma} (\Psi_{\sigma+1,k} + \Psi_{\sigma+1,k+1})\right| > \eps x, \sigma < \tau_1 \right] \\
			& \leq \PP \left[ 4\sum_{n\geq \sigma} \left| \V_{\sigma,n} - \Z_\sigma\Pi_{\sigma+1,n}\right| > \eps x/2, \sigma < \tau_1 \right] \\
			& + \PP \left[ 3\sum_{n\geq \sigma} \max_{S_n \leq k < S_{n+1}} |V_{\sigma,k} - \V_{\sigma,n}| > \eps x/2, \sigma < \tau_1 \right].
		\end{split}
	\end{equation*}
	Both terms can be estimated by Lemma~\ref{lem:Nimmigrants} applied with $\gamma = \alpha$. Conditioned on $(\sigma, Z_1, \dots Z_{S_\sigma})$, the process $(V_{\sigma,n})_{n\geq S_{\sigma}}$ is a~sum of $\Z_\sigma$ independent copies of the process $U$. We have, on the event $\{\sigma < \tau_1\}$,
	\begin{equation*}
		\begin{split}
			\PP&\left[4\sum_{n\geq \sigma} \left| \V_{\sigma,n} - \Z_\sigma\Pi_{\sigma+1,n}\right| > \eps x/2 \,\Bigg| \, \sigma, Z_1, \dots Z_{S_\sigma} \right] \leq C_1 (\eps x/8)^{-\alpha} \Z_\sigma^{\alpha/2},
		\end{split}
	\end{equation*}
	which gives
	\begin{equation*}
		\begin{split}
			\PP\left[ 4\sum_{n\geq \sigma} \left| \V_{\sigma,n} - \Z_\sigma\Pi_{\sigma+1,n}\right| > \eps x/2, \sigma < \tau_1 \right]
			\leq C_1 8^\alpha (\eps x)^{-\alpha} \EE\left[ \Z_\sigma^{\alpha/2} \1_{\sigma < \tau_1}\right].
		\end{split}
	\end{equation*}
	Similarly,
	\begin{equation*}
		\PP \left[ 3 \sum_{n\geq \sigma} \max_{S_n < k < S_{n+1}} |V_{\sigma,k} - \V_{\sigma,n}| > \eps x/2, \sigma < \tau_1 \right]
		\leq C_1 6^\alpha (\eps x)^{-\alpha} \EE\left[ \Z_\sigma^{\alpha/2} \1_{\sigma < \tau_1} \right].
	\end{equation*}
	Therefore, for some constant $C'$,
	\begin{multline*}
		\PP \left[ \left|\max_{k\geq S_\sigma} (V_{\sigma,k}+V_{\sigma,k+1}) - \Z_\sigma\max_{k \geq S_\sigma} (\Psi_{\sigma+1,k} + \Psi_{\sigma+1,k+1})\right| > \eps x, \sigma < \tau_1 \right] \\
		\leq C' (\eps x)^{-\alpha} \EE\left[\Z_\sigma^{\alpha/2} \1_{\sigma < \tau_1}\right].
	\end{multline*}
	Finally, for any fixed $\eps > 0$, since $\Z_\sigma \geq A$, we have
	\begin{equation*}
		\EE\left[ \Z_\sigma^{\alpha/2} \1_{\sigma < \tau_1} \right]
		\leq A^{-\alpha/2} \EE\left[ \Z_\sigma^{\alpha} \1_{\sigma < \tau_1} \right]
	\end{equation*}
	and one may choose $A_2(\eps)$ large enough for the claim to hold.
	
\end{proof}

\begin{lem}\label{lem:maxPsi}
	There exists $c_\Psi \in (0,\infty)$ such that for any fixed $A>0$,
	\begin{equation}
		\PP\left[ \Z_\sigma \max_{k \geq S_\sigma} (\Psi_{\sigma+1,k} + \Psi_{\sigma+1,k+1}) > x, \sigma < \tau_1 \right] \sim c_\Psi \EE\left[ \Z_\sigma^\alpha \1_{\sigma < \tau_1} \right] x^{-\alpha},
	\end{equation}
	where $\sigma = \sigma(A)$.
\end{lem}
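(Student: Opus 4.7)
The plan is to use the strong Markov property at the stopping time $\sigma$. Note that $\sigma$ is a stopping time with respect to the filtration $\G_n$ generated by $(\xi_j,\rho_j)_{j\leq n}$ together with the internal branching randomness through generation $S_n$, so both $\Z_\sigma$ and $\{\sigma<\tau_1\}$ are $\G_\sigma$-measurable. On the other hand, $\wt{M} := \max_{k \geq S_\sigma}(\Psi_{\sigma+1,k}+\Psi_{\sigma+1,k+1})$ is a function of $(\xi_j,\rho_j)_{j\geq \sigma+1}$ alone; by the i.i.d.\ structure of the environment $\wt{M}$ is independent of $\G_\sigma$ and distributed as $\wt{M}_1 := \max_{k\geq 0}(\Psi_{1,k}+\Psi_{1,k+1})$. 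Conditioning on $\G_\sigma$ gives
\begin{equation*}
\PP\bigl[\Z_\sigma \wt{M}>x,\, \sigma<\tau_1\bigr]=\EE\bigl[\1_{\sigma<\tau_1}\,g(x/\Z_\sigma)\bigr],
\end{equation*}
where $g(y):=\P[\wt{M}_1>y]$.

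The key analytic step is the tail asymptotic $g(y)\sim c_\Psi y^{-\alpha}$ as $y\to\infty$, for some $c_\Psi\in(0,\infty)$. A direct computation of the sum $\Psi_{1,k}+\Psi_{1,k+1}$ over all possible $k$ yields
\begin{equation*}
\wt{M}_1=\max_{n\geq 0}\Pi_{1,n}\,h_{n+1},
\end{equation*}
with the i.i.d.\ multipliers $h_{n+1}=(1+\rho_{n+1})\vee(2\,\1_{\xi_{n+1}\geq 2})$, which satisfy $\E h^\alpha<\infty$ under $(A)$, and for each $n$, $h_{n+1}$ is independent of $\Pi_{1,n}$. This yields the stochastic fixed-point relation $\wt{M}_1\od\max(h_1,\rho_1 \wt{M}_1')$ with $\wt{M}_1'$ distributed as $\wt{M}_1$ and independent of $\rho_1$. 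Under assumptions $(A)$ the hypotheses of Goldie's implicit renewal theorem (\cite[Theorem 2.4.4]{buraczewski:2016:power}) are fulfilled, producing the Pareto tail with a finite constant $c_\Psi$; positivity of $c_\Psi$ follows from the lower bound $\wt{M}_1\geq\max_{n\geq 0}\Pi_{1,n}$, whose tail is of order $y^{-\alpha}$ with positive constant by the same theory.

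The conclusion then comes from dominated convergence applied to $x^\alpha\1_{\sigma<\tau_1}\,g(x/\Z_\sigma)$. Pointwise this converges to $c_\Psi\Z_\sigma^\alpha\1_{\sigma<\tau_1}$. For a dominating function, the crude estimate $\wt{M}_1\leq 2\Rb_1$ together with \eqref{eq:Rabarbarleq} yields $x^\alpha g(x/\Z_\sigma)\leq C\Z_\sigma^\alpha$ uniformly in $x$, and $\Z_\sigma^\alpha\1_{\sigma<\tau_1}$ is integrable by Lemma \ref{lem:Zsigmamoment}. Passing to the limit then gives the asserted equivalence with constant $c_\Psi\EE[\Z_\sigma^\alpha\1_{\sigma<\tau_1}]$.

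I expect the main obstacle to be the Kesten--Goldie-type tail computation for $\wt{M}_1$: while $g(y)=O(y^{-\alpha})$ is immediate from the upper bound $\wt{M}_1\leq 2\Rb_1$ and \eqref{eq:Rabarbarleq}, extracting the exact asymptotic constant requires a careful setup of the fixed-point equation and verification of the hypotheses of the implicit renewal theorem (in particular, the non-arithmeticity assumption on $\log\rho$ enters only here). The strong Markov reduction and the dominated convergence step are routine once the tail of $\wt{M}_1$ is in hand.
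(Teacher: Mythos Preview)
Your proposal is correct and follows essentially the same route as the paper: identify $\wt{M}_1=\max_{n\geq 0}\Pi_{1,n}\,h_{n+1}$ with $h_{n+1}=(1+\rho_{n+1})\vee 2\1_{\xi_{n+1}>1}$, establish the power tail $\P[\wt{M}_1>y]\sim c_\Psi y^{-\alpha}$, and then combine it with the independence of $\Z_\sigma\1_{\sigma<\tau_1}$ and $\wt{M}$.

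Two minor deviations from the paper are worth flagging. First, for the tail of $\wt{M}_1$ the paper does not pass through a stochastic fixed-point equation and Goldie's theorem, but rather recognises $\log\wt{M}_1$ as the supremum of a \emph{perturbed random walk} and invokes Theorem~1.3.8 in Iksanov's monograph, which directly yields both the asymptotic and an explicit (positive, finite) formula for $c_\Psi$. Your approach is fine in spirit, but the specific reference \cite[Theorem~2.4.4]{buraczewski:2016:power} covers the additive equation $R\overset{d}{=}AR+B$, not the max equation $R\overset{d}{=}B\vee AR$; you would need the max-type variant (e.g.\ Goldie's original paper or the perturbed random walk result the paper uses), and your positivity argument via the lower bound $\max_n\Pi_{1,n}$ still requires a citation for its exact tail. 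Second, where you carry out the dominated convergence by hand, the paper simply quotes Breiman's lemma, which is exactly what your computation reproves; this is a matter of packaging rather than substance.
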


\begin{proof}
	Since the sequence $\Psi_{\sigma+1,k}$ is constant on the blocks between marked points, we have
	\begin{equation*}
		\max_{k \geq S_\sigma} (\Psi_{\sigma+1,k} + \Psi_{\sigma+1,k+1}) = \max_{n \geq \sigma} \left(2\1_{\xi_{n+1} > 1} \vee (1+\rho_{n+1})\right)\Pi_{\sigma+1,n}.
	\end{equation*}
	Observe that
	\begin{equation*}
		\log\left(\left(2\1_{\xi_{n+1} > 1} \vee (1+\rho_{n+1})\right)\Pi_{1,n}\right) = \sum_{k=1}^n \log(\rho_k) + \log(2\1_{\xi_{n+1} > 1} \vee (1+\rho_{n+1}))
	\end{equation*}
	is a~perturbed random walk\footnote{A process $(X_n)_{n\in\N}$ is called a~\emph{perturbed random walk} if it is of the form $X_n = \sum_{k=1}^{n-1}A_k + B_n$ for i.i.d.\ pairs of random variables $((A_n,B_n))_{n\in\N}$. We refer the reader to \cite{iksanow2016renewal} for a~detailed study of perturbed random walks.}. By Theorem 1.3.8 in \cite{iksanow2016renewal}, assumptions $(A)$ guarantee that
	\begin{equation*}
		\PP \left[ \max_{n \geq 0} (2\1_{\xi_{n+1} > 1} \vee (1+\rho_{n+1})) \Pi_{1,n} > x \right] \sim c_\Psi x^{-\alpha}
	\end{equation*}
	for a~constant $c_\Psi \in (0,\infty)$ given by
	\begin{equation*}
		c_\Psi = \E(2^\alpha \1_{\xi_1>1} \vee (1+\rho_1)^\alpha - \max_{n\geq 2} (2^\alpha \1_{\xi_{n+1} > 1} \vee (1+\rho_{n+1})^\alpha)\Pi_{1,n}^\alpha)_+.
	\end{equation*}
	
	Note that the variables $\Z_\sigma \1_{\sigma < \tau_1}$ and $\max_{n \geq \sigma} (2\1_{\xi_{n+1} > 1} \vee (1+\rho_{n+1})) \Pi_{\sigma+1, n}$ are independent under $\PP$. Therefore, by Breiman's lemma \cite[Lemma~B.5.1]{buraczewski:2016:power},
	\begin{multline*}
		\PP \left[ \Z_\sigma \max_{k \geq S_\sigma} (\Psi_{\sigma+1,k} + \Psi_{\sigma+1,k+1}) > x, \sigma < \tau_1 \right] \\
		= \PP \left[ \Z_\sigma \1_{\sigma < \tau_1} \cdot \max_{n \geq \sigma} (2\1_{\xi_{n+1} > 1} \vee (1+\rho_{n+1})) \Pi_{\sigma+1,n} > x \right]
		\sim \EE\left[ \Z_\sigma^\alpha \1_{\sigma < \tau_1} \right] c_\Psi x^{-\alpha}. 
	\end{multline*}
\end{proof}

The rest of the proof is standard. First, all the lemmas proven so far allow us to determine the asymptotics of the maximum in time $[0,S_{\tau_1})$. Then we use the fact that the extinctions divide our process into independent pieces.

\begin{prop}\label{prop:maxinblock}
	For some constant $c_M>0$,
	\begin{equation*}
		\PP \left[ \max_{0 \leq n < S_{\tau_1}} (Z_n + Z_{n+1}) > x \right] \sim c_M x^{-\alpha}.
	\end{equation*}
\end{prop}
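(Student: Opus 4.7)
The plan is to exploit the decomposition given by the stopping time $\sigma=\sigma(A)$: for $A$ large, the maximum over the epoch $[0,S_{\tau_1})$ is governed, up to negligible corrections, by the $\Z_\sigma$ particles present at generation $S_\sigma$ and their descendants. Write $M = \max_{0 \leq n < S_{\tau_1}}(Z_n + Z_{n+1})$ and $\Phi = \max_{k \geq S_\sigma}(\Psi_{\sigma+1,k} + \Psi_{\sigma+1,k+1})$. The first step is the split
\[
\PP[M > x] = \PP[M > x, \sigma \geq \tau_1] + \PP[M > x, \sigma < \tau_1],
\]
the first term being $o(x^{-\alpha})$ by Lemma \ref{lem:beforesigma}. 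On $\{\sigma < \tau_1\}$ decompose, for $k \geq S_\sigma$,
\[
Z_k = V_{\sigma,k} + \sum_{j=\sigma+1}^{\tau_1} Y^j_{k - S_{j-1}},
\]
separating descendants of the $\Z_\sigma$ particles from descendants of immigrants arriving later; the contribution from $k < S_\sigma$ is once again $o(x^{-\alpha})$ by Lemma \ref{lem:beforesigma}.

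Fix $\eps > 0$ and choose $A \geq A_1(\eps) \vee A_2(\eps)$. Lemma \ref{lem:aftersigma}, together with $Y^j_n + Y^j_{n+1} \leq 2 \max_m Y^j_m$, shows that $\sum_{j=\sigma+1}^{\tau_1} \max_n (Y^j_n + Y^j_{n+1}) \leq 2\eps x$ outside a set of probability at most $\eps x^{-\alpha}$, while Lemma \ref{lem:Zsigmaprog} permits replacing $\max_{k \geq S_\sigma}(V_{\sigma,k}+V_{\sigma,k+1})$ by $\Z_\sigma \Phi$ up to an error of $\eps x$, outside a set of probability at most $\eps \EE[\Z_\sigma^\alpha \1_{\sigma<\tau_1}]\, x^{-\alpha}$. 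Plugging these approximations into the tail asymptotic of Lemma \ref{lem:maxPsi} yields, for every $A \geq A_1(\eps) \vee A_2(\eps)$,
\[
c_\Psi \EE[\Z_\sigma^\alpha \1_{\sigma<\tau_1}](1+C\eps)^{-\alpha} - O(\eps) \leq \liminf_{x\to\infty} x^\alpha \PP[M > x] \leq \limsup_{x\to\infty} x^\alpha \PP[M > x] \leq c_\Psi \EE[\Z_\sigma^\alpha \1_{\sigma<\tau_1}](1-C\eps)^{-\alpha} + O(\eps),
\]
with implicit constants $C$ and $O(\eps)$ that can be taken independent of $A$.

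Since the outer limits do not depend on $A$, applying the sandwich to two threshold values $A_1, A_2 \geq A(\eps)$ forces $c(A) := c_\Psi \EE[\Z_{\sigma(A)}^\alpha \1_{\sigma(A) < \tau_1}]$ to be Cauchy in $A$ as $\eps \to 0$, hence convergent to some limit $c_M$; positivity $c_M > 0$ follows from $c(A_0) > 0$ for any fixed $A_0$ (Lemma \ref{lem:Zsigmamoment}) together with the lower side of the sandwich at $A = A_0$ for $\eps$ sufficiently small. Letting $\eps \to 0$ (and $A \to \infty$ correspondingly) then gives $\PP[M > x] \sim c_M x^{-\alpha}$. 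The main technical obstacle is handling the triple limit in $x$, $\eps$, and $A$; in particular one must check that the $O(\eps)$ error in the sandwich is genuinely uniform in $A$, which comes down to the uniform-in-$A$ boundedness of $c(A)$, itself a consequence of the upper side of the sandwich combined with the finiteness $c(A_0) < \infty$ provided by Lemma \ref{lem:Zsigmamoment}.
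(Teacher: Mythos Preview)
Your proposal is correct and follows essentially the same route as the paper: split at $\sigma=\sigma(A)$, discard the pre-$\sigma$ part via Lemma~\ref{lem:beforesigma}, discard post-$\sigma$ immigrants via Lemma~\ref{lem:aftersigma}, replace $\max_k(V_{\sigma,k}+V_{\sigma,k+1})$ by $\Z_\sigma\Phi$ via Lemma~\ref{lem:Zsigmaprog}, invoke Lemma~\ref{lem:maxPsi}, and read off the sandwich. Your discussion of the uniformity of the $O(\eps)$ term in $A$ (via boundedness of $c(A)$ from the upper side of the sandwich) makes explicit a point the paper passes over quickly, but the argument is the same.
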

\begin{proof}
	Fix $\eps > 0$ and take $A > A(\eps) := \max\{A_1(\eps), A_2(\eps)\}$, $\sigma = \sigma(A)$. First, observe that
	\begin{multline*}
		\PP \left[ \max_{S_\sigma \leq n < S_{\tau_1}} (Z_n+Z_{n+1}) > x, \sigma < \tau_1 \right]
		\leq \PP \left[ \max_{0 \leq n < S_{\tau_1}} (Z_n+Z_{n+1}) > x \right] \\
		\leq \PP \left[ \max_{S_\sigma \leq n < S_{\tau_1}} (Z_n + Z_{n+1}) > x, \sigma < \tau_1 \right]
		+ \PP \left[ \max_{n < S_\sigma \wedge S_{\tau_1}} (Z_n+Z_{n+1}) > x\right].
	\end{multline*}
	Lemma~\ref{lem:beforesigma} ensures that for large enough $x$,
	\begin{equation*}
		\PP \left[ \max_{n < S_\sigma \wedge S_{\tau_1}} (Z_n+Z_{n+1}) > x\right] \leq \PP \left[ 2\max_{n \leq S_\sigma \wedge S_{\tau_1}} Z_n > x\right] \leq \eps x^{-\alpha}.
	\end{equation*}
	Recall that by $Y(k) = (Y_j(k))_{j\in\Z}$ we denoted the process counting the progeny of immigrants arriving in the $k$'th block, with the convention $Y_j(k) = 0$ for $j < 0$. For $n \geq S_\sigma$,
	\begin{equation*}
		Z_n = V_{\sigma,n} + \sum_{k=\sigma+1}^{\tau_1} Y_{n-S_{k-1}}(k),
	\end{equation*}
	thus
	\begin{multline*}
		\PP\left[ \max_{S_\sigma \leq n < S_{\tau_1}} (V_{\sigma,n} + V_{\sigma, n+1}) > x, \sigma < \tau_1 \right]
		\leq \PP \left[ \max_{S_\sigma \leq n < S_{\tau_1}} (Z_n + Z_{n+1}) > x, \sigma < \tau_1 \right]\\
		\leq \PP\left[ \max_{S_\sigma \leq n < S_{\tau_1}} (V_{\sigma,n} + V_{\sigma,n+1}) > (1-\eps)x, \sigma < \tau_1 \right]
		+ \PP\left[ 2 \sum_{k=\sigma+1}^{\tau_1} \max_{n \geq 1} Y_n(k) > \eps x \right]
	\end{multline*}
	and Lemma~\ref{lem:aftersigma} ensures that
	\begin{equation*}
		\PP\left[ 2 \sum_{k=\sigma+1}^{\tau_1} \max_{n \geq 1} Y_n(k) > \eps x \right] \leq \eps x^{-\alpha}.
	\end{equation*}
	Finally,
	\begin{equation*}
		\begin{split}
			\PP&\left[ \Z_\sigma \max_{k \geq S_\sigma} (\Psi_{\sigma,k} + \Psi_{\sigma,k+1}) > (1+\eps)x, \sigma < \tau_1 \right] \\
			& - \PP \left[ \left|\max_{k\geq S_\sigma} (V_{\sigma,k}+V_{\sigma,k+1}) - \Z_\sigma\max_{k \geq S_\sigma} (\Psi_{\sigma,k} + \Psi_{\sigma,k+1})\right| > \eps x, \sigma < \tau_1 \right] \\
			& \leq \PP\left[ \max_{S_\sigma \leq n < S_{\tau_1}} (V_{\sigma,n}+V_{\sigma,n+1}) > x, \sigma < \tau_1 \right] \\
			& \leq \PP\left[ \Z_\sigma \max_{k \geq S_\sigma} (\Psi_{\sigma,k} + \Psi_{\sigma,k+1}) > (1-\eps)x, \sigma < \tau_1 \right] \\
			& + \PP \left[ \left|\max_{k\geq S_\sigma} (V_{\sigma,k}+V_{\sigma,k+1}) - \Z_\sigma\max_{k \geq S_\sigma} (\Psi_{\sigma,k} + \Psi_{\sigma,k+1})\right| > \eps x, \sigma < \tau_1 \right],
		\end{split}
	\end{equation*}
	and by Lemma~\ref{lem:Zsigmaprog},
	\begin{equation*}
		\PP \left[ \left|\max_{k\geq S_\sigma} (V_{\sigma,k}+V_{\sigma,k+1}) - \Z_\sigma\max_{k \geq S_\sigma} (\Psi_{\sigma,k} + \Psi_{\sigma,k+1})\right| > \eps x, \sigma < \tau_1 \right] \leq \eps x^{-\alpha} \EE\left[ \Z_\sigma^\alpha \1_{\sigma < \tau} \right].
	\end{equation*}
	
	Putting things together and invoking Lemma~\ref{lem:maxPsi} and Lemma~\ref{lem:Zsigmamoment} we get that for any $\eps > 0$ such that $\eps(1+\eps)^{\alpha} < c_\Psi$ and for any $A > A(\eps)$,
	\begin{multline*}
		0 < ((1+\eps)^{-\alpha}c_\Psi - \eps) \EE\left[ \Z_\sigma^\alpha \1_{\sigma < \tau_1} \right] \\
		\leq \liminf_{x\to\infty} x^\alpha \PP\left[ \max_{0 \leq n < S_{\tau_1}} (Z_n+Z_{n+1}) > x \right]
		\leq \limsup_{x\to\infty} x^\alpha \PP\left[ \max_{0 \leq n < S_{\tau_1}} (Z_n+Z_{n+1}) > x \right] \\
		\leq ((1-2\eps)^{-\alpha}c_\Psi + \eps) \EE\left[ \Z_\sigma^\alpha \1_{\sigma < \tau_1} \right] + 2\eps < \infty.
	\end{multline*}
	Observe that this relation implies that both the limits 
	$$\lim_{x\to\infty} x^\alpha \PP\left[ \max_{0 \leq n < S_{\tau_1}} (Z_n+Z_{n+1}) > x \right] \quad \textnormal{and}\quad  \lim_{A\to\infty}\EE\left[ \Z_{\sigma(A)}^\alpha \1_{\sigma(A) < \tau_1} \right]$$
	exist, are positive and satisfy
	\begin{equation*}
		\lim_{x\to\infty} x^\alpha \PP\left[ \max_{0 \leq n < S_{\tau_1}} (Z_n+Z_{n+1}) > x \right] = c_\Psi \lim_{A\to\infty} \EE\left[ \Z_{\sigma(A)}^\alpha \1_{\sigma(A) < \tau_1} \right] =: c_M.
	\end{equation*}
\end{proof}

Due to Lemma~\ref{lem:Sn-to-n} and the relation \eqref{eq:Zk-to-Lk}, the next result implies Theorem \ref{thm:A}.

\begin{thm}\label{thm:AZ}
	Under assumptions $(A)$,
	\begin{equation*}
		\PP\left[ n^{-1/\alpha} \max_{0 \leq k < S_n} (Z_k + Z_{k+1}) > x\right] \xrightarrow{n \to \infty} 1 - \exp\left(-\frac{c_M}{\EE\tau_1} x^{-\alpha}\right)
	\end{equation*}
	for every $x > 0$.
\end{thm}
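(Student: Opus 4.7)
The plan is to exploit the decomposition of $Z$ into i.i.d.\ epochs between consecutive extinctions and reduce the statement to a classical extreme value limit for i.i.d.\ regularly varying variables with a random index. Introduce the epoch maxima
\begin{equation*}
M_i := \max_{S_{\tau_{i-1}} \leq k < S_{\tau_i}} (Z_k + Z_{k+1}), \qquad i \geq 1.
\end{equation*}
Because at each $\tau_i$ one has $\Z_{\tau_i}=0$ and the process then restarts afresh from a single immigrant in an independent piece of the i.i.d.\ environment, the sequence $(M_i)_{i \geq 1}$ is i.i.d.\ under $\PP$, with common distribution that of $\max_{0 \leq k < S_{\tau_1}}(Z_k+Z_{k+1})$. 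Proposition~\ref{prop:maxinblock} therefore gives the tail asymptotics
\begin{equation*}
\PP[M_1 > x] \sim c_M x^{-\alpha}.
\end{equation*}
Setting $\nu_n := \max\{i \geq 0 : \tau_i \leq n\}$, the strong law of large numbers applied to $(\tau_i-\tau_{i-1})_{i\geq 1}$ (whose mean is finite by Lemma~\ref{lem:tau}) yields $\nu_n/n \to 1/\EE\tau_1$, $\PP$-a.s.

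The next step is the deterministic sandwich
\begin{equation*}
\max_{1 \leq i \leq \nu_n} M_i \;\leq\; \max_{0 \leq k < S_n}(Z_k+Z_{k+1}) \;\leq\; \max_{1 \leq i \leq \nu_n+1} M_i,
\end{equation*}
where the upper bound embeds the final incomplete epoch into the next complete one. For any deterministic integer sequence $k_n$ with $k_n/n \to 1/\EE\tau_1$, independence of the $M_i$'s together with the tail estimate gives
\begin{equation*}
\PP\Big[\max_{i \leq k_n} M_i \leq x n^{1/\alpha}\Big] = \bigl(1 - \PP[M_1 > xn^{1/\alpha}]\bigr)^{k_n} \longrightarrow \exp\Big(-\frac{c_M}{\EE\tau_1}\,x^{-\alpha}\Big),
\end{equation*}
since $\PP[M_1 > xn^{1/\alpha}] \sim c_M x^{-\alpha}/n$. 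A standard $\eps$-sandwich --- on the event $(1/\EE\tau_1 - \eps)n \leq \nu_n \leq (1/\EE\tau_1 + \eps)n$, which has probability tending to $1$ --- transfers this deterministic-index limit to $\max_{i\leq \nu_n} M_i$ and, by a single extra term, also to $\max_{i\leq \nu_n+1} M_i$. Letting $\eps \to 0$ pinches both envelopes of the sandwich to the common limit $\exp(-c_M x^{-\alpha}/\EE\tau_1)$.

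The main obstacle I anticipate is verifying rigorously that the two envelopes $\max_{i\leq \nu_n} M_i$ and $\max_{i\leq \nu_n+1} M_i$ have the same Fr\'echet limit despite the random index: one must either use continuity of $k \mapsto \exp(-ck^{-\alpha}n^{-1})$ in the coupling or directly estimate $\PP[M_{\nu_n+1}>xn^{1/\alpha}] \to 0$ via conditioning on $\nu_n$. Both arguments are routine once the epoch tail estimate from Proposition~\ref{prop:maxinblock} and the renewal-type control of $\nu_n$ are in hand, so no new heavy machinery is needed beyond what has already been developed.
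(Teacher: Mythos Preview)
Your proposal is correct and follows essentially the same route as the paper: the paper first notes that the i.i.d.\ epoch structure and Proposition~\ref{prop:maxinblock} give the Fr\'echet limit for $n^{-1/\alpha}\max_{0\leq k<S_{\tau_n}}(Z_k+Z_{k+1})$, and then remarks that the passage from $S_{\tau_n}$ to $S_n$ is done exactly as in the proof of Lemma~\ref{lem:Sn-to-n}, which is precisely your $\eps$-sandwich with the random index $\nu_n$. You have simply unpacked that reference into the explicit argument.
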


\begin{proof}
	Since the extinctions divide the process $Z$ into independent fragments, an immediate corollary of Proposition \ref{prop:maxinblock} is that
	\begin{equation*}
		\PP\left[ n^{-1/\alpha} \max_{0 \leq k < S_{\tau_n}} (Z_k + Z_{k+1}) > x\right] \xrightarrow{n \to \infty} 1 - \exp(-c_M x^{-\alpha})
	\end{equation*}
	(c.f.\ \cite[Proposition 1.11]{resnick:2013:extreme}). Lemma~\ref{lem:tau} implies that $\EE\tau_1 < \infty$. Therefore passing from the maximum up to time $S_{\tau_n}$ to the maximum up to $S_n$ may be done exactly as in the proof of Lemma~\ref{lem:Sn-to-n}.
\end{proof}

\section{Proof of Theorem \ref{thm:B}}\label{sec:thmB}

As we saw in the proof of Theorem \ref{thm:A}, the limiting behaviour of maxima in case $(A)$ comes from the tail asymptotics of the variable $M_\Psi$ defined in \eqref{def:Mpsi}. The assumption $\E\xi^{\alpha+\delta} < \infty$ implies that for every $k$, $\max_{j<\xi_k} Y_j(k)$ is negligible. In terms of the random walk, this means that the time the walker spends in a~block when crossing it for the first time is negligible. As we will see, under assumptions $(B)$ it is not; the maximal local time is obtained when the walker crosses a~particularly long block for the first time, by their visits to sites within this block and potentially excursions to the left.

Consider a~simple symmetric random walk on $\Z$ and denote by $\bar{L}_k(n)$ the number of times the walk visits site $k$ before reaching $n$. Consider $(\bar{L}_s(n))_{s\in[0,n]}$ being a~piecewise linear interpolation of $(\bar{L}_{k}(n) )_{0 \leq k\leq n}$. The Ray-Knight theorem \cite{ray:1963, knight:1963} states that
$$ \left(\frac{1}{n} \bar{L}_{n(1-t)}(n)\right)_{t \in [0,1]} \tod \left(B_t\right)_{t \in [0,1]}$$
in $C[0,1]$ as $n\to\infty$, where $B$ is a~squared Bessel process which may be defined as
\begin{equation}\label{def:Bessel}
	B_t = \|W(t)\|^2,
\end{equation}
for $W(t) = (W_1(t),W_2(t))$ being a~standard two-dimensional Brownian motion with $W(0)= 0$. By the continuous mapping theorem,
\begin{equation}\label{eq:maxL}
	\left( \frac{1}{n} \max_{k \leq n} \bar{L}_k(n), \frac{1}{n}\bar{L}_0(n) \right) \tod (M_B, B(1)),
\end{equation}
where $M_B = \sup\{B_t: t \in [0,1]\}$. 

With this at hand, we may inspect the maximal local time that the RWSRE obtains when crossing a~(long) block between marked points for the first time. To this end, consider a~walk starting at $0$ in the environment that has marked points only on the non-positive half-line, and stop it when it reaches point $N$. By the Ray-Knight theorem, the limit of maximal local time in the interval $[1,N]$, where the walk is symmetric, scaled by $N$, is $M_B$. As we saw in the proof of Theorem \ref{thm:A}, the number of visits in the negative half-line should be controlled by the number of visits to $1$ and the maxima of the potential $\Psi$.

In the associated branching process, the steps of the walk during its first crossing of a~block between marked points are counted by the process $Y$. Therefore our goal is to understand the growth of the maximal generation in the process $Y$ as the size of the first block -- in which the immigrants arrive -- tends to infinity. To this end, for any $N\in\N$ let $Y^{(N)}$ be a~BPSRE evolving in an environment with fixed $\xi_1 = N$ and such that the immigrants arrive only in generations up to $(N-1)$'th.

\begin{lem}\label{lem:oneblockmax}
	Under assumptions $(B)$,
	\begin{equation}
		\frac{1}{N}\max_{k\geq 0} \left(Y^{(N)}_k + Y^{(N)}_{k+1}\right) \tod M_\infty \quad \textnormal{as } N \to \infty
	\end{equation}
	for $M_\infty = \max(M_B, B(1)M_\Psi/2)$, where $M_\Psi$ is a~copy of the variable defined in \eqref{def:Mpsi} independent of the Bessel process $B$.
\end{lem}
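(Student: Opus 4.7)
The plan is to separate the contribution of the first block (of deterministic length $N$, in which the process behaves like a simple random walk) from the contribution of the sparse random environment afterwards, and to identify two independent limits $M_B$ and $M_\Psi$. Within the first block, $(Y^{(N)}_k)_{0\le k<N}$ is a Galton--Watson process with $Geo(1/2)$ offspring and unit immigration, so by the duality \eqref{eq:branchingDist} it encodes the local times of a simple symmetric random walk crossing $[0,N]$. The Ray--Knight theorem together with the continuous mapping theorem therefore yields
\[
\frac{1}{N}\max_{0\le k\le N-2}\bigl(Y^{(N)}_k+Y^{(N)}_{k+1}\bigr)\tod M_B,\qquad \frac{Y^{(N)}_{N-1}}{N}\tod \frac{B(1)}{2}
\]
jointly. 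Since $Y^{(N)}_N=\sum_{i=1}^{Y^{(N)}_{N-1}+1}G_i$ with $G_i\sim Geo(\lambda_1)$ i.i.d.\ given $\rho_1$, a conditional law of large numbers extends this to $Y^{(N)}_N/N\tod \rho_1 B(1)/2$, still jointly with the above.

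For the tail, conditionally on $Y^{(N)}_N=m$ and on the environment, $(Y^{(N)}_{N+j})_{j\ge 0}$ is the sum of $m$ independent copies of a BPSRE started from a single particle, without immigration, in the shifted environment $(\xi_{\ell+1},\rho_{\ell+1})_{\ell\ge 1}$. I would prove a many-particle law of large numbers: as $m\to\infty$,
\[
\frac{1}{m}\max_{j\ge 0}\bigl(Y^{(N)}_{N+j}+Y^{(N)}_{N+j+1}\bigr)\topr \max_{j\ge 0}\bigl(\Psi'_{1,j}+\Psi'_{1,j+1}\bigr),
\]
with $\Psi'$ denoting the potential of the shifted environment. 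This follows by comparing the sum to its quenched mean $m\,\Psi'_{1,j}$ via Lemma \ref{lem:Nimmigrants} applied with $\gamma=\beta+\delta$: the independence of $\xi$ and $\rho$ in $(B)$, together with $\E\xi<\infty$, imply conditions $(\Gamma)$ for this $\gamma$, so the resulting $O_P(m^{\gamma/2})$ bounds on the sum-fluctuations and within-block oscillations are negligible on the $m$-scale; Lemma \ref{lem:tau} ensures the max is attained on a finite window a.s.

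Combining the two parts and including the boundary term at $k=N-1$, which satisfies $(Y^{(N)}_{N-1}+Y^{(N)}_N)/N\tod(1+\rho_1)B(1)/2$, one obtains
\[
\frac{1}{N}\max_{k\ge N-1}\bigl(Y^{(N)}_k+Y^{(N)}_{k+1}\bigr)\tod \frac{B(1)}{2}\max\Bigl(1+\rho_1,\ \rho_1\max_{j\ge 0}(\Psi'_{1,j}+\Psi'_{1,j+1})\Bigr)\od \frac{B(1)}{2}M_\Psi,
\]
the last identification holding because multiplying the shifted-environment potential by $\rho_1$ and prepending $1+\rho_1$ reconstitutes $M_{\Psi,1}$ block-by-block via the relation $\Psi_{1,k}=\rho_1\Psi_{2,k}$ for $k\ge S_1$. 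Since the Ray--Knight limits depend only on the first-block branching randomness, $(M_B,B(1))$ is independent of $M_\Psi$, and taking the maximum of the block and tail contributions yields $M_\infty=\max(M_B,B(1)M_\Psi/2)$. The principal obstacle is the many-particle LLN in the tail: one must establish the convergence \emph{uniformly} in $m=Y^{(N)}_N$ that itself grows like $N$, and verify that the boundary effects around $k=N$ align exactly with the $1+\rho_1$ and $2\rho_1\1_{\xi_2\ge 2}$ terms appearing inside $M_\Psi$; Lemma \ref{lem:Nimmigrants} and the moment control \eqref{eq:psimoment} supply the needed quantitative estimates.
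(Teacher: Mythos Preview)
Your proposal is correct and follows essentially the same approach as the paper: Ray--Knight for the first block to obtain $(M_B,B(1)/2)$, then Lemma~\ref{lem:Nimmigrants} to replace the tail maximum by $Y^{(N)}_{N-1}$ times the potential maximum, with independence of $(M_B,B(1))$ and $M_\Psi$ coming from the fact that the first-block evolution uses no environment randomness. The only cosmetic differences are that the paper applies Lemma~\ref{lem:Nimmigrants} with $\gamma=\beta$ rather than $\beta+\delta$ (both yield an $O(N^{-\gamma/2})$ error on scale $N$), and it frames the tail comparison as a direct annealed tail bound $\PP[|\cdot|>\eps N]\le C\eps^{-\beta}N^{-\beta/2}$ rather than a conditional LLN in $m$---which automatically handles the ``uniformity in $m$'' you flag as the principal obstacle.
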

\begin{proof}
	To simplify the notation we will write $Y$ instead of $Y^{(N)}$. Observe that \eqref{eq:maxL} and the duality between branching process and random walk imply
	$$ \left( \frac{1}{N}\max_{k \leq N-2}(Y_k + Y_{k+1}), \frac{1}{N}(Y_{N-1} + Y_{N-2}) \right) \tod (M_B, B(1)). $$
	However, since the particles in generation $N-1$ are children of those from $(N-2)$'th and an immigrant, born with distribution $Geo(1/2)$, we have
	\begin{equation*}
		\EE\left(Y_{N-1} - Y_{N-2} - 1\right)^2 = \EE (Y_{N-1} - \EE\left[Y_{N-1} \, | \, Y_{N-2} \right])^2 = 2(\EE Y_{N-2} + 1) = 2(N-1),
	\end{equation*}
	which, together with Chebyshev's inequality, implies that $(Y_{N-1}-Y_{N-2})/N \topr 0$ and thus
	\begin{equation*}
		\left( \frac{1}{N}\max_{k \leq N-2}(Y_k + Y_{k+1}), \frac{Y_{N-1}}{N} \right) \tod (M_B, B(1)/2).
	\end{equation*}
	Moreover, the variables $Y_k$ for $k \leq N-1$ are independent of the environment, in particular of $\Psi_{1,n}, n\geq 0$.
	
	From here on we proceed as in the proof of Lemma~\ref{lem:Zsigmaprog}, to show that the maximum in generations after $(N-1)$'th is comparable with $Y_{N-1}M_\Psi$. That is, we use Lemma~\ref{lem:Nimmigrants} applied with $\gamma = \beta$ to obtain, for some constant $C>0$,
	\begin{equation}\label{eq:YafterN}
		\PP \left[ \left|\max_{k\geq N} (Y_{k}+Y_{k+1}) - \Y_1\max_{k \geq N} (\Psi_{2,k} + \Psi_{2,k+1})\right| >  x \right]
		\leq C x^{-\beta} \EE \Y_1^{\beta/2}
	\end{equation}
	for any $x > 0$. The particles in the first marked generation $S_1 = N$ are born with distribution $Geo(\lambda_1)$ from those counted by $Y_{{N-1}}$ and an immigrant. Therefore we have $\Eo \Y_1 = N\rho_1$, and by Jensen's inequality,
	\begin{equation*}
		\EE \Y_1^{\beta/2} \leq N^{\beta/2} \E\rho^{\beta/2}.
	\end{equation*}
	Moreover, we may calculate quenched moments of $\Y_1$ conditioned on $Y_{N-1}$ to get an analogue of \eqref{Uevolution}. We obtain
	\begin{equation}\label{eq:Y1toYN}
		\begin{split}
			\EE\left|\Y_1 - \rho_1 Y_{N-1} \right|^{\beta}
			& \leq \E\left(\Eo(\Y_1 - \rho_1 Y_{N-1} )^2\right)^{\beta/2} \\
			& = \E\left((\Eo Y_{N-1}(\rho_1^2 + \rho_1) + 2\rho_1^2 + \rho_1 \right)^{\beta/2} \\
			& \leq (N^{\beta/2}+1) (2^{\beta/2}\E\rho^{\beta} + \E\rho^{\beta/2}),
		\end{split}
	\end{equation}
	where the last inequality follows from subadditivity of $x \mapsto x^{\beta/2}$ and the fact that $\Eo Y_{N-1} = N-1$. Observe that $\max_{k\geq N}(\Psi_{2,k} + \Psi_{2,k+1}) \leq 2 + M_{\Psi,2}$ and by \eqref{eq:psimoment}, $\E M_\Psi^\beta < \infty$. Therefore, since $(Y_{N-1}, \Y_1, \rho_1)$ is independent of $(\rho_j)_{j\geq 2}$, we have
	\begin{multline}\label{eq:YatYN}
		\PP\left[ \left| \Y_1 \max_{k\geq N}(\Psi_{2,k} + \Psi_{2,k+1}) - \rho_1 Y_{N-1}\max_{k\geq N}(\Psi_{2,k} + \Psi_{2,k+1}) \right| > x \right] \\
		\leq x^{-\beta} \E (2+M_\Psi)^{\beta} \EE|\Y_1 - \rho_1Y_{N-1}|^{\beta} \leq C' x^{-\beta} (N^{\beta/2}+1)
	\end{multline}
	for some constant $C'>0$ and any $x > 0$.
	
	Observe that \eqref{eq:YafterN} and \eqref{eq:YatYN} imply that for any fixed $\eps > 0$,
	\begin{equation*}
		\begin{split}
			\PP &\left[ \left|\max_{k\geq N} (Y_{k}+Y_{k+1}) - Y_{N-1}\max_{k \geq N} (\Psi_{1,k} + \Psi_{1,k+1})\right| > \eps N \right] \\
			& \leq \PP \left[ \left|\max_{k\geq N} (Y_{k}+Y_{k+1}) - \Y_1\max_{k \geq N} (\Psi_{2,k} + \Psi_{2,k+1})\right| >  \eps N/2 \right] \\
			& + \PP\left[ \left| \Y_1 \max_{k\geq N}(\Psi_{2,k} + \Psi_{2,k+1}) - \rho_1 Y_{N-1}\max_{k\geq N}(\Psi_{2,k} + \Psi_{2,k+1}) \right| > \eps N/2 \right] \\
			& \leq (\eps N/2)^{-\beta} \left(C N^{\beta/2} \E\rho^{\beta/2} + C'(N^{\beta/2} + 1)\right) = O(N^{-\beta/2}).
		\end{split}
	\end{equation*}
	Finally, by \eqref{eq:Y1toYN}, for any $\eps>0$,
	\begin{equation*}
		\PP\left[ |\Y_1 - \rho_1 Y_{N-1}| > \eps N \right] \leq \eps^{-\beta} (N^{-\beta/2} + N^{-\beta}) (2^{\beta/2}\E\rho^\beta + \E\rho^{\beta/2}) = O(N^{-\beta/2}),
	\end{equation*}
	therefore the weak limit of
	\begin{equation*}
		\frac{1}{N}\max_{k\geq 0} (Y_k + Y_{k+1}) = \frac{1}{N} \max\left( \max_{k\leq N-2} (Y_k + Y_{k+1}), Y_{N-1} + \Y_1, \max_{k \geq N} (Y_k + Y_{k+1})\right)
	\end{equation*}
	is the same as that of
	\begin{multline*}
		\frac{1}{N} \max\left( \max_{k\leq N-2} (Y_k + Y_{k+1}), Y_{N-1}(1 + \rho_1), Y_{N-1}\max_{k\geq N}(\Psi_{1,k} + \Psi_{1,k-1})\right) \\
		= \frac{1}{N} \max\left( \max_{k\leq N-2} (Y_k + Y_{k+1}), Y_{N-1}M_{\Psi,1}\right)
	\end{multline*}
	which is $\max(M_B, B(1)M_\Psi/2)$ by the continuous mapping theorem.
\end{proof}

\begin{rem}\label{rem:Mmoment}
	Under assumptions $(B)$, $\EE M_\infty^{\beta + \delta} < \infty$. Indeed, by \eqref{def:Bessel},
	\begin{equation*}
		M_B^2 = \sup\left\{\left(W_1(t)^2 + W_2(t)^2\right)^2 \, : \, t \in [0,1]\right\},
	\end{equation*}
	where $W_1, W_2$ are independent one-dimensional Brownian motions. Doob's maximal inequality applied to $W_1, W_2$ implies that $\EE M_B^2 <\infty$. Since $\beta+\delta \leq 2$, it follows that $\EE M_B^{\beta+\delta} < \infty$. Moreover, by \eqref{eq:psimoment}, $\EE M_{\Psi}^{\beta+\delta} < \infty$, and since $M_\Psi$ and $B$ are independent, we have
	\begin{equation*}
		\EE M_\infty^{\beta+\delta} \leq \EE M_B^{\beta+\delta} \EE(1 + M_\Psi/2)^{\beta+\delta} < \infty.
	\end{equation*}
\end{rem}

Recall that the process $Y(k)$ counts the progeny of immigrants arriving in the $k$'th block. Since Lemma~\ref{lem:oneblockmax} suggests that the maximum of process $Y(k)$ should be comparable with $\xi_k M_\infty$ when $\xi_k$ is large, we begin the proof of Theorem \ref{thm:B} by distinguishing large blocks in the environment. Recall the sequence $(a_n)_{n\in\N}$ defined in \eqref{def:an}. Fix $\eps > 0$ and let
\begin{equation*}
	I_{n,\eps} = \{ k \leq n \, : \, \xi_k > \eps a_n\}, \quad I_{n,\eps}^c = \{ k \leq n \, : \, \xi_k \leq \eps a_n\}.
\end{equation*}
For fixed $n$ and $k \leq n$, we will say that the $k$'th block is large if $k \in I_{n,\eps}$, and small otherwise.

It follows from the definition of the sequence $(a_n)_{n\in\N}$ and regular variation of the tails of $\xi$ that for any $x > 0$,
\begin{equation}\label{eq:anconv}
	n \P[\xi > x a_n] \to x^{-\beta}, \quad n \to \infty.
\end{equation}
Therefore, by Proposition 3.21 in \cite{resnick:2013:extreme},
\begin{equation}\label{eq:PPP}
	\sum_{k=1}^n \delta_{(\xi_k/a_n, k/n)} \tod P_\mu,
\end{equation}
where $P_\mu$ is a~Poisson point process on $(0,\infty]\times[0,\infty)$ with intensity measure $d\mu(x,t) = \beta x^{-\beta-1} dx dt$. In particular, as $n\to \infty$, the sequence of variables $|I_{n,\eps}|$, which count the number of large blocks, converges weakly to a~Poisson distribution with parameter $\eps^{-\beta}$. 

We begin by showing that all the progeny of immigrants arriving in small blocks is negligible.
\begin{prop}\label{prop:smalltr}
	There is a~constant $C_5$ such that for any $\eps > 0$ and $\bar{\eps} > 0$,
	\begin{equation*}
		\limsup_{n\to\infty} \PP\left[ \max_{j \geq 1} \sum_{k \in I_{n,\eps}^c}  Y_{j-S_{k-1}}(k) > \bar{\eps} a_n \right] \leq C_5 \bar{\eps}^{-\beta-\delta} \eps^{\delta}.
	\end{equation*}
\end{prop}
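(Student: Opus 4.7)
The plan is to decompose
\[\Sigma_j^n := \sum_{k\in I_{n,\eps}^c}Y^k_{j-S_{k-1}} = A_j + B_j,\]
at each time $j$, where, letting $k_0(j)$ denote the unique block with $S_{k_0(j)-1}\leq j<S_{k_0(j)}$, the contribution $A_j := Y^{k_0(j)}_{j-S_{k_0(j)-1}}\1_{k_0(j)\in I_{n,\eps}^c}$ captures the small block currently being crossed, and $B_j$ is the aggregated descent from previously completed small blocks. Since $\max_{j\geq 1}\Sigma_j^n \leq \max_j A_j + \max_j B_j$, I would bound the two pieces separately against $\bar{\eps}a_n/2$.

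For $A_j$ I would use that inside its own block, $(Y^k_l)_{l<\xi_k}$ is a~Galton--Watson process with unit immigration and $Geo(1/2)$ reproduction, so Lemma~\ref{lem:maxGW} yields $\Eo[\max_{l<\xi_k}(Y^k_l)^2] \leq 16\xi_k^2$; Jensen's inequality (valid since $\beta+\delta\leq 2$) upgrades this to $\EE[(\max_{l<\xi_k}Y^k_l)^{\beta+\delta}\mid\xi_k=m] \leq 4^{\beta+\delta}m^{\beta+\delta}$. Markov's inequality together with a~union bound over the $n$ blocks, combined with Karamata's estimate $\E[\xi^{\beta+\delta}\1_{\xi\leq\eps a_n}] \sim (\beta/\delta)(\eps a_n)^\delta\ell(\eps a_n)$ and the identity $\ell(a_n)\sim a_n^\beta/n$ coming from \eqref{def:an}, will produce $\PP[\max_j A_j > \bar{\eps}a_n/2] \leq C_A\eps^\delta\bar{\eps}^{-(\beta+\delta)}$.

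For $B_j$ the goal is to prove that $\PP[\max_j B_j > \bar{\eps}a_n/2] \to 0$ as $n\to\infty$. The key point is that convexity of $\gamma\mapsto\log\E\rho^\gamma$ together with $\E\log\rho<0$ and $\E\rho^{\beta+\delta}<1$ forces $\E\rho^\gamma<1$ for every $\gamma\in(0,\beta+\delta]$. Using this subcriticality and the independence of $\xi$ and $\rho$ under $(B)$, the conditional expectation $\Eo Y^k_{j-S_{k-1}} = \xi_k\Pi_{k,k_0(j)-1}$ gives the uniform first-moment bound $\EE B_j \leq \E\xi\cdot\E\rho/(1-\E\rho)$. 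A~parallel computation, modelled on the $\Rb$-telescoping argument in Lemma~\ref{lem:Nimmigrants} and using that the $Y^k$ are conditionally independent given the environment, would yield a~uniform $p$-th moment bound $\EE B_j^p\leq C'$ for some $p\in(\beta,\beta+\delta]$. A~Markov-and-union-bound over $j\leq cn$ (legitimate because $S_n/n\to\E\xi$ almost surely and $\Sigma^n$ extincts soon after $S_n$ by subcriticality) then gives $\PP[\max_j B_j > \bar{\eps}a_n/2] \leq C''n/a_n^p \to 0$, since $a_n$ is regularly varying with index $1/\beta$ and $p>\beta$.

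The main obstacle will be the uniform $p$-th moment bound on $B_j$: while the first moment follows immediately from geometric decay, the $p$-th moment must handle covariances between descents from different past blocks that share the downstream environment. The strategy mirrors Lemma~\ref{lem:Nimmigrants}, combining conditional independence of the $Y^k$ given the environment with $\E\rho^p<1$ and the independence of $\xi$ from $\rho$ under $(B)$.
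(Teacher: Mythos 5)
Your decomposition into the block currently being crossed ($A_j$) and the descent of previously completed small blocks ($B_j$) is reasonable, and the $A_j$ part is essentially sound: it reproduces the first half of the paper's Lemma~\ref{lem:immmax} (Doob for the $Geo(1/2)$ process with immigration, then Markov, Karamata and $\ell(a_n)\sim a_n^\beta/n$), and the union bound over $n$ blocks does give $C\eps^{\delta}\bar{\eps}^{-\beta-\delta}$. The gap is in the $B_j$ part. The claimed uniform bound $\EE B_j^p\leq C'$ for some $p\in(\beta,\beta+\delta]$ is false: since $\P[\xi>x]\sim x^{-\beta}\ell(x)$, we have $\E\xi^p=\infty$ for $p>\beta$, and even after restricting to small blocks the truncated moment $\E[\xi^p\1_{\xi\leq\eps a_n}]\sim\tfrac{\beta}{p-\beta}\,\eps^{p-\beta}a_n^p/n$ diverges with $n$. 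By conditional Jensen, $\EE B_{S_k}^p\geq\EE[(\Eo\Y^k_1)^p\1_{\xi_k\leq\eps a_n}]=\E\rho^p\,\E[\xi^p\1_{\xi\leq\eps a_n}]\to\infty$, so no $n$-independent constant $C'$ exists. Worse, the conclusion you want from it --- $\PP[\max_jB_j>\bar{\eps}a_n/2]\to0$ --- is also false in general: the descent of block $k$ into the very next marked generation has quenched mean $\xi_k\rho_k$, and by Breiman $\P[\xi\rho>x]\sim\E\rho^\beta\,x^{-\beta}\ell(x)$, so $\max_{k\leq n}\xi_k\rho_k\1_{\xi_k\leq\eps a_n}$ is of exact order $a_n$ whenever $\P[\rho>\bar{\eps}/\eps]>0$. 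The $B_j$ term is therefore \emph{not} negligible; it contributes at the same order $\eps^{\delta}\bar{\eps}^{-\beta-\delta}$ as $A_j$.

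The strategy is salvageable if you replace the uniform moment bound by the correct $n$-dependent one, $\EE B_j^p\leq C\eps^{p-\beta}a_n^p/n$ (via Minkowski on $\Eo B_j=\sum_{k}\xi_k\1_{\xi_k\leq\eps a_n}\Pi_{k,m-1}$ using $\E\rho^p<1$, plus a variance term), after which Markov and a union bound over the $O(n)$ relevant generations yield $C\bar{\eps}^{-p}\eps^{p-\beta}$ --- the required bound, just not $o(1)$. For comparison, the paper avoids the current-block/past-blocks split entirely: it uses the extinction times $\tau_m$ to cut the process into i.i.d.\ epochs, bounds $\max_j\sum_kY^k_j$ by $\sum_k\max_jY^k_j$ within each epoch, and applies Lemma~\ref{lem:immmax} with $\gamma=\beta+\delta$ (which already controls the maximum of $Y$ over \emph{all} generations, inside and after its block) together with the $x/2k^2$ splitting and $\EE\tau_1^{2\gamma+1}<\infty$; the factor $n$ from the number of epochs cancels against the $1/n$ in $\E[\xi^{\gamma}\1_{\xi\leq\eps a_n}]$. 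You would also need to justify the restriction of the union bound to $O(n)$ generations via the exponential moments of $\tau_1$, which you gesture at but do not carry out.
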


\begin{proof}
	Let
	\begin{equation*}
		\eta_n = \inf\{k > 0 : \tau_k > n\}.
	\end{equation*}
	Since $\EE\tau_1 < \infty$ by Lemma~\ref{lem:tau}, the strong law of large numbers implies $\eta_n/n \to \eta := 1/\EE\tau$ as $n \to \infty$, $\PP$-a.s. We have
	\begin{equation*}
		\begin{split}
			\PP\left[ \max_{j \geq 1} \sum_{k \in I_{n,\eps}^c}  Y_{j-S_{k-1}}(k) > \bar{\eps} a_n \right]
			&\leq \PP\left[ \max_{j \geq 1} \sum_{k \leq \tau_{2n\eta}}  Y_{j-S_{k-1}}(k) \1_{\xi_k \leq \eps a_n} > \bar{\eps} a_n \right] \\
			&  + \PP\left[ |\eta - \eta_n/n| > \eta \right].
		\end{split}
	\end{equation*}
	The second term tends to $0$ as $n\to\infty$. Since the extinctions divide our process into i.i.d.\ pieces, we have
	\begin{equation*}
		\begin{split}
			\PP\left[ \max_{j \geq 1} \sum_{k \leq \tau_{2n\eta}}  Y_{j-S_{k-1}}(k) \1_{\xi_k \leq \eps a_n} > \bar{\eps} a_n \right] 
			& \leq \sum_{m = 1}^{2n\eta} \PP\left[ \max_{j \geq 1} \sum_{k=\tau_{m-1}}^{\tau_{m}} Y_{j-S_{k-1}}(k) \1_{\xi_k \leq \eps a_n} > \bar{\eps} a_n \right] \\
			= 2n\eta & \, \PP\left[ \max_{j \geq 1} \sum_{k=0}^{\tau_{1}} Y_{j-S_{k-1}}(k) \1_{\xi_k \leq \eps a_n} > \bar{\eps} a_n \right] \\
			\leq 2n\eta & \, \PP \left[ \sum_{k = 0}^{\tau_1} \max_{j \geq 1} Y_j(k) \1_{\xi_k \leq \eps a_n} > \bar{\eps} a_n \right] \\
			= 2n\eta & \, \PP\left[ \sum_{k = 1}^\infty \1_{k \leq \tau_1} \max_{j \geq 1} Y_j(k) \1_{\xi_k \leq \eps a_n} > \bar{\eps} a_n \right] \\
			\leq 2n\eta & \, \sum_{k = 1}^\infty \PP\left[ \tau_1 \geq k\right] \PP \left[ \max_{j \geq 1} Y_j(k) \1_{\xi_k \leq \eps a_n} > \bar{\eps} a_n/2k^2 \right],
		\end{split}
	\end{equation*}
	where in the last line we used the fact that $\{\tau_1 \geq k\}$ and the process $Y(k)$ are independent.
	
	Since the environment is given by an i.i.d.\ sequence, it is enough to estimate the tails of the maximum of the process $(Y_j \1_{\xi_1 \leq \eps a_n})_{j\in\N}$. By Lemma~\ref{lem:immmax} applied with $\gamma = \beta + \delta$,
	\begin{equation*}
		\PP\left[ \max_{j \geq 1} Y_j\1_{\xi_1 \leq \eps a_n} > x \right] \leq C_2 x^{-\gamma} \left( \E\left(\Eo Y_{\xi_1-1}^2 \1_{\xi_1 \leq \eps a_n}\right)^{\gamma/2} + \EE \Y_1^\gamma\1_{\xi_1 \leq \eps a_n}\right).
	\end{equation*}
	As we calculated in the proof of Lemma~\ref{lem:immmax},
	\begin{equation*}
		\Eo Y_{\xi_1-1}^2\1_{\xi_1 \leq \eps a_n} = \xi_1(\xi_1-1)\1_{\xi_1 \leq \eps a_n}, \quad \Eo\Y_1^2\1_{\xi_1 \leq \eps a_n} = (2\xi_1^2\rho_1^2 + \xi_1\rho_1)\1_{\xi_1 \leq \eps a_n},
	\end{equation*}
	therefore
	\begin{equation*}
		\E\left(\Eo Y_{\xi_1-1}^2 \1_{\xi_1 \leq \eps a_n}\right)^{\gamma/2} \leq \E\xi^\gamma\1_{\xi\leq \eps a_n}
	\end{equation*}
	and
	\begin{equation*}
		\EE \Y_1^\gamma \1_{\xi_1 \leq \eps a_n} \leq \E \left(\Eo \Y_1^2 \1_{\xi_1 \leq \eps a_n}\right)^{\gamma/2} \leq \left( 2^{\gamma/2} \E\rho^\gamma + \E\rho^{\gamma/2} \right) \E\xi^\gamma \1_{\xi \leq \eps a_n}.
	\end{equation*}
	Putting things together, for some constant $C>0$ and any $x>0$,
	\begin{equation*}
		\PP\left[ \max_{j \geq 1} Y_j\1_{\xi_1 \leq \eps a_n} > x \right] \leq C x^{-\gamma} \E\xi^\gamma \1_{\xi \leq \eps a_n} \leq C x^{-\gamma} \int_0^{\eps a_n} t^{\gamma - 1} \P[\xi > t] dt.
	\end{equation*}
	By Karamata's theorem \cite[Theorem 1.5.11]{bingham:1987:regular} and \eqref{eq:anconv}, 
	\begin{equation*}
		\int_0^{\eps a_n} t^{\gamma - 1} \P[\xi > t] dt \sim \frac{1}{\gamma+\beta} (\eps a_n)^{\gamma} \P[\xi > \eps a_n] \sim \frac{1}{\gamma+\beta} \eps^{\gamma-\beta} a_n^\gamma n^{-1}.
	\end{equation*}
	Using those estimates, we obtain, for some constants $C',C''>0$,
	\begin{equation*}
		\begin{split}
			\PP\left[ \max_{j \geq 1} \sum_{k \leq \tau_{2n\eta}}  Y_{j-S_{k-1}}(k) \1_{\{\xi_k \leq \eps a_n\}} > \bar{\eps} a_n \right] 
			& \leq C' n \sum_{k=1}^\infty \PP[\tau_1 \geq k] \left(\bar{\eps} a_n/2k^2\right)^{-\gamma} \eps^{\gamma-\beta} a_n^\gamma n^{-1} \\
			& \leq C'' \bar{\eps}^{-\gamma} \eps^{\gamma - \beta} \EE\tau_1^{2\gamma+1},
		\end{split}
	\end{equation*}
	which finishes the proof since $\gamma = \beta + \delta$ and $\EE\tau_1^{2\gamma+1} < \infty$ by Lemma~\ref{lem:tau}.
\end{proof}

The next step is to investigate the maximal generations among the progeny of immigrants from large blocks. Although it may happen that the descendants of particles from several large blocks coexist in one generation of the process $Z$, we will show later that it is unlikely, so that we may begin by investigating the maxima of $|I_{n,\eps}|$ {\it independent} processes, each representing the progeny of immigrants from a~large block. To this end, assume that our probability space contains variables $\left\{\left(Y^{(N)}_k(j)\right)_{k\in \N} \, : \, j,N \in \N \right\}$ such that
\begin{itemize}
	\setlength\itemsep{0em}
	\item the processes $\left(Y^{(N)}_k(j)\right)_{k\in \N}$ are i.i.d.\ copies of $\left(Y^{(N)}_k\right)_{k\in\N}$,
	\item the family $\left\{\left(Y^{(N)}_k(j)\right)_{k\in \N} : j,N\in\N \right\}$ is independent of the environment $\{(\xi_k,\lambda_k)\}_{k\in\Z}$.
\end{itemize}
For any $j,N \in \N$ denote 
\begin{equation}\label{def:Mcoup}
	M_N(j) = \max_{k \geq 0} \left(Y^{(N)}_k(j) + Y^{(N)}_{k+1}(j)\right).
\end{equation}

\begin{prop}\label{prop:largetr}
	Fix $\eps > 0$. For any $x > 0$,
	\begin{equation*}
		\lim_{n\to\infty} \PP\left[ \max_{j \in I_{n,\eps}} M_{\xi_j}(j) > x a_n \right] = 1 - \exp\left(-x^{-\beta}\EE M_\infty^\beta \1_{M_\infty < x/\eps} - \eps^{-\beta}\PP[M_\infty \geq x/\eps]\right).
	\end{equation*}
\end{prop}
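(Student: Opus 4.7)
The plan is to express the left-hand side as the probability that a marked empirical point process hits a certain set, and to identify the limit as the corresponding Poisson probability. Define
$$N_n := \sum_{k=1}^n \delta_{(\xi_k/a_n,\, k/n,\, M^k_{\xi_k}/\xi_k)}
\quad\text{and}\quad B_x := \bigl\{(u,t,m) : u > \eps,\, um > x\bigr\},$$
viewed as a subset of $(0,\infty]\times[0,1]\times[0,\infty)$. Then $\{\max_{j\in I_{n,\eps}} M^j_{\xi_j} > x a_n\} = \{N_n(B_x) \geq 1\}$, so the question reduces to a weak-convergence statement for $N_n$.

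I would first prove that $N_n$ converges vaguely to the Poisson point process $N_\infty$ with intensity $\beta u^{-\beta-1}du\, dt\, \PP_{M_\infty}(dm)$. The unmarked coordinates converge by \eqref{eq:PPP}. For the marks, the key observation is that, by construction, the family $\{Y^{j,(N)}\}_{j,N}$ is independent of the environment and i.i.d.\ in $j$ for each fixed $N$, so the marks $M^j_{\xi_j}/\xi_j$ are conditionally independent given the environment, and each has distribution $\PP[M^{(N)}_N/N \in \cdot]$ when $\xi_j = N$. By Lemma \ref{lem:oneblockmax} this distribution converges to $\PP_{M_\infty}$ as $N \to \infty$, and uniformly for $N \geq \eps a_n$ since $\eps a_n \to \infty$. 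Combining this with \eqref{eq:PPP} via a Skorokhod coupling (or via a direct Laplace-functional computation) gives the marked-point-process convergence $N_n \tod N_\infty$.

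Since $B_x$ is bounded away from $\{u = 0\}$, the intensity $\mu_\infty(B_x)$ is finite and weak convergence yields $\PP[N_n(B_x) \geq 1] \to 1 - \exp(-\mu_\infty(B_x))$. A direct computation splits the $u$-integral at $\max(\eps, x/m)$: for $m < x/\eps$ one gets $\int_{x/m}^\infty \beta u^{-\beta-1} du = m^\beta x^{-\beta}$, while for $m \geq x/\eps$ one gets $\int_\eps^\infty \beta u^{-\beta-1} du = \eps^{-\beta}$. Integrating over $m$ against $\PP_{M_\infty}$ produces
$$\mu_\infty(B_x) = x^{-\beta}\,\EE\!\left[M_\infty^\beta \1_{M_\infty < x/\eps}\right] + \eps^{-\beta}\,\PP[M_\infty \geq x/\eps],$$
which is exactly the exponent appearing in the statement.

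It remains to insert the extinction indicators $\1_{\bar{D}_{j,n}}$ and the set $A_n$. For $j \in I_{n,\eps}$, the process $Y^{j,(\xi_j)}$ after generation $\xi_j$ is a subcritical BPSRE without immigration starting from at most order $a_n$ particles, and under $\E\rho^{\beta+\delta} < 1$ a single-family extinction time has exponential tails (Lemma \ref{lem:tau}), so that $\sqrt{n}$ marked generations is more than enough to extinguish order-$a_n$ families with probability $1-o(1/n)$. Since $|I_{n,\eps}|$ is tight (it converges weakly to $\mathrm{Poisson}(\eps^{-\beta})$) and $\PP[A_n] \to 1$, intersecting with $A_n \cap \bigcap_{j\in I_{n,\eps}} \bar{D}_{j,n}$ changes the probability by $o(1)$. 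The main obstacle is the marked-point-process convergence with $\xi_j$-dependent mark laws; beyond that, all steps are routine.
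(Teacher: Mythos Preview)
Your approach is correct and genuinely different from the paper's. The paper does not use point-process machinery at all: it conditions on $|I_{n,\eps}|$, writes
\[
\PP\Bigl[\max_{j\in I_{n,\eps}} M^j_{\xi_j}\1_{\bar D_{j,n}} > xa_n\Bigr]
= 1 - \E\Bigl[\bigl(1-\PP[M_\xi > xa_n,\ \bar D_n \mid \xi>\eps a_n]\bigr)^{|I_{n,\eps}|}\Bigr] + o(1),
\]
and then computes the conditional probability by hand, using Lemma~\ref{lem:oneblockmax}, continuity of the law of $M_\infty$, and the uniform convergence theorem for regularly varying functions to pass from $\PP[M_\xi>xa_n\mid\xi>\eps a_n]$ to $\PP[\xi M_\infty>xa_n\mid\xi>\eps a_n]$ and then to the explicit limit. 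The Poisson convergence of $|I_{n,\eps}|$ to $\mathrm{Poiss}(\eps^{-\beta})$ finishes the argument. Your route packages the same ingredients (regular variation of $\xi$, convergence of $M_N/N$, independence of marks given the environment) into a single vague-convergence statement for a marked empirical process; this is cleaner conceptually and makes the form of the limiting exponent transparent as $\mu_\infty(B_x)$, but it requires citing or verifying a position-dependent marking theorem, and you should check explicitly that $B_x$ is a $\mu_\infty$-continuity set (this uses that the law of $M_\infty$ is atomless, which the paper also invokes). The paper's bare-hands computation avoids that dependence at the cost of more lines.

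One imprecision: your argument for disposing of $\1_{\bar D_{j,n}}$ is not quite right. You say the process after generation $\xi_j$ starts from ``at most order $a_n$ particles'', but $\Y_1^{j,(\xi_j)}$ is random and $\xi_j$ is only bounded \emph{below} by $\eps a_n$, so no such deterministic bound is available. The paper's argument is simpler and does not go through the initial population at all: the extinction time of $Y^{(N)}$ at marked generations is stochastically dominated by $\tau_1$ (since $Y^{(N)}$ is dominated by the full process with immigration), and Lemma~\ref{lem:tau} gives $\PP[\bar D_{j,n}^c]\le\PP[\tau_1>\sqrt n]\le Ce^{-c\sqrt n}$ uniformly in $j$ and $\xi_j$. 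A union bound over all $j\le n$ (tightness of $|I_{n,\eps}|$ is not needed) then yields $\PP\bigl[\bigcup_{j\le n}\bar D_{j,n}^c\bigr]\to 0$.
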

\begin{proof}
	
	Recall that $|I_{n,\eps}|$ converges in distribution to $Pois(\eps^{-\beta})$. Moreover, conditioning on $|I_{n,\eps}| = k$, the examined maximum is a~maximum of $k$ independent variables with distribution given by
	\begin{equation*}
		\PP\left[ M_{\xi} \in \cdot \, \big| \, \xi > \eps a_n \right]
	\end{equation*}
	for $\xi$ independent of $\{Y^{(N)}, M_N \, : \, N\in\N \}$. In particular,
	\begin{equation}\label{eq:maxIn}
		\PP\left[ \max_{j \in I_{n,\eps}} M_{\xi_j}(j) > x a_n \right]
		= 1 - \E\left[(1-\PP\left[M_\xi > x a_n \, | \, \xi > \eps a_n\right])^{|I_{n,\eps}|}\right].
	\end{equation}
	
	Fix $\bar{\eps} > 0$. By Lemma~\ref{lem:oneblockmax}, $M_N/N \tod M_\infty$ as $N\to\infty$. Observe that the distribution of $(M_B, B(1))$ is absolutely continuous. Since $M_\infty = \max(M_B, B(1)M_\Psi/2)$ for $M_\Psi$ independent of the Bessel process $B$, it follows from an application of Fubini's and the dominated convergence theorems that the distribution of $M_\infty$ is also absolutely continuous. In particular, the cumulative distribution functions of $M_N/N$ converge uniformly to the cumulative distribution function of $M_\infty$. That is, there exists $N_0 \in \N$ such that for $N > N_0$,
	\begin{equation*}
		\sup_{y>0}|\PP[M_N/N > y] - \PP[M_\infty > y]| < \bar{\eps}.
	\end{equation*}
	In particular, for $\xi$ independent of $M_\infty$ and $n$ such that $\eps a_n > N_0$,
	\begin{equation}\label{eq:est2}
		\begin{split}
			\sup_{y > 0}& \left| \PP\left[ M_{\xi}>  y \, | \, \xi > \eps a_n \right] - \PP\left[M_\infty > y/\xi \, | \, \xi > \eps a_n \right] \right| \\
			&= \sup_{y>0} \frac1{\P[\xi > \eps a_n]}\left| \sum_{N>\eps a_n} \left(\PP[M_N/N > y/N] - \PP[M_\infty > y/N]\right)\P[\xi=N] \right| \\
			& \leq \frac1{\P[\xi > \eps a_n]} \sum_{N > \eps a_n} \bar{\eps} \P[\xi=N] \\
			&= \bar{\eps}.
		\end{split}
	\end{equation}
	Observe that
	\begin{equation*}
		\begin{split}
			\PP&\left[ M_\infty > x a_n/\xi \, | \, \xi > \eps a_n \right]
			= \frac{\PP[\xi M_\infty > x a_n, \xi > \eps a_n]}{\P[\xi > \eps a_n]} \\
			& = \frac{1}{\P[\xi > \eps a_n]} \left(\int_{[0,x/\eps)}\P[\xi > x a_n/t] \PP[M_\infty \in dt] + \int_{[x/\eps, \infty)} \P[\xi > \eps a_n] \PP[M_\infty \in dt] \right) \\
			& = \int_{[0,x/\eps)}\frac{\P[\xi > x a_n/t]}{\P[\xi > \eps a_n]} \PP[M_\infty \in dt] + \PP[M_\infty \geq x/\eps].
		\end{split}
	\end{equation*}
	By the uniform convergence theorem for regularly varying functions (see (B.1.2) in \cite{buraczewski:2016:power}), for $n$ large enough,
	\begin{equation*}
		\sup_{c \geq 1} \left|\frac{\P[\xi > c \eps a_n]}{\P[\xi > \eps a_n]} - c^{-\beta} \right| < \bar{\eps},
	\end{equation*}
	which means that
	\begin{multline*}
		\left| \int_{[0,x/\eps)}\frac{\P[\xi > x a_n/t]}{\P[\xi > \eps a_n]} \PP[M_\infty \in dt] - x^{-\beta}\eps^\beta \EE M_\infty^\beta \1_{M_\infty < x/\eps} \right| \\ =
		\left| \int_{[0,x/\eps)}\frac{\P[\xi > x a_n/t]}{\P[\xi > \eps a_n]} - \left(\frac{x}{t\eps}\right)^{-\beta} \PP[M_\infty \in dt] \right| < \bar{\eps}.
	\end{multline*}
	Therefore,
	\begin{equation*}
		\left|\PP[M_\infty > x a_n/\xi \, | \, \xi > \eps a_n] - \left(x^{-\beta} \eps^\beta \EE M_\infty^\beta \1_{M_\infty < x/\eps} + \PP[M_\infty \geq x/\eps]\right)\right| < \bar{\eps},
	\end{equation*}
	which together with \eqref{eq:est2} implies, for large $n$,
	\begin{equation*}\label{eq:est3}
		\left| \PP\left[ M_{\xi}>  x a_n \, | \, \xi > \eps a_n \right] - \left(x^{-\beta} \eps^\beta \EE M_\infty^\beta \1_{M_\infty < x/\eps} + \PP[M_\infty \geq x/\eps]\right) \right| < 2\bar{\eps}.
	\end{equation*}
	Putting this estimate to \eqref{eq:maxIn} and using the fact that $|I_{n,\eps}| \tod Pois(\eps^{-\beta})$, we obtain
	\begin{multline*}
		1 - \exp\left(-\eps^{-\beta}\left(x^{-\beta} \eps^\beta \EE M_\infty^\beta \1_{M_\infty < x/\eps} + \PP[M_\infty \geq x/\eps] -2\bar{\eps}\right)\right) \\
		\leq \liminf_{n\to\infty} \PP\left[ \max_{k \leq n} M_{\xi_k}(k)\1_{\xi_k > \eps a_n} > x a_n \right] \leq \limsup_{n\to\infty} \PP\left[ \max_{k \leq n} M_{\xi_k}(k)\1_{\xi_k > \eps a_n} > x a_n \right] \\
		\leq 1 - \exp\left(-\eps^{-\beta}\left(x^{-\beta} \eps^\beta \EE M_\infty^\beta \1_{M_\infty < x/\eps} + \PP[M_\infty \geq x/\eps] +2\bar{\eps}\right)\right),
	\end{multline*}
	which finishes the proof since $\bar{\eps}$ is arbitrary.
\end{proof}

We are now ready to prove Theorem \ref{thm:B}, rephrased into the setting of the associated branching process.

\begin{thm}
	Under assumptions $(B)$,
	\begin{equation*}
		\PP\left[ a_n^{-1} \max_{0 \leq k < S_n} (Z_k + Z_{k+1}) > x\right] \xrightarrow{n \to \infty} 1 - \exp\left(-\EE M_\infty^\beta x^{-\beta}\right)
	\end{equation*}
	for every $x>0$.
\end{thm}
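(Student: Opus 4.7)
The plan is to combine Proposition \ref{prop:smalltr} with Proposition \ref{prop:largetr} via the decomposition $Z_k = \sum_{j \geq 1} Y^j_{k-S_{j-1}}$, and then let the threshold $\eps \downarrow 0$. For each fixed $\eps > 0$, I would split each $Z_k + Z_{k+1}$ into contributions from small blocks ($j \in I_{n,\eps}^c$) and from large blocks ($j \in I_{n,\eps}$). Proposition \ref{prop:smalltr} directly implies that, for any $\bar{\eps} > 0$, the small-block contribution exceeds $\bar{\eps}a_n$ with limsup probability at most $C_5 \bar{\eps}^{-\beta-\delta}\eps^\delta$, which is made arbitrarily small by choosing $\eps$ small after fixing $\bar{\eps}$.

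For the large-block part, the central observation is that on a good event $A_n$ of probability tending to $1$, different large blocks do not interact. I would choose $A_n \in \sigma(I_{n,\eps})$ to be the event that any two indices in $I_{n,\eps}$ differ by more than $\lceil\sqrt{n}\rceil$ in block index; the Poisson point process convergence \eqref{eq:PPP} together with $|I_{n,\eps}| \tod \mathrm{Poiss}(\eps^{-\beta})$ ensures $\P[A_n] \to 1$. On $A_n \cap \bigcap_{j \in I_{n,\eps}} \bar{D}_{j,n}$, each $Y^j$ with $j \in I_{n,\eps}$ is extinct by the $\sqrt{n}$-th marked generation, hence its support uses only a window of $\sqrt{n}$ consecutive blocks starting at $j$, disjoint from all other large-block windows. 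The family $(Y^j)_{j \in I_{n,\eps}}$ is thus a measurable function of disjoint portions of the i.i.d.\ environment, which allows a coupling with the independent copies $(Y^{j,(\xi_j)})_{j \in I_{n,\eps}}$ appearing in Proposition \ref{prop:largetr}. Moreover on this event at most one $j \in I_{n,\eps}$ contributes to $Z_k + Z_{k+1}$ for each $k$, so
\begin{equation*}
\max_{0 \leq k < S_n} \sum_{j \in I_{n,\eps}} \bigl(Y^j_{k-S_{j-1}} + Y^j_{k+1-S_{j-1}}\bigr) = \max_{j \in I_{n,\eps}} M^j_{\xi_j} \1_{\bar{D}_{j,n}}.
\end{equation*}

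Combining the two parts and applying Proposition \ref{prop:largetr}, for every $\eps > 0$ I would derive matching asymptotic upper and lower bounds whose common limit is
\begin{equation*}
1 - \exp\bigl(-x^{-\beta}\EE M_\infty^\beta \1_{M_\infty < x/\eps} - \eps^{-\beta}\PP[M_\infty \geq x/\eps]\bigr).
\end{equation*}
Sending $\eps \downarrow 0$, Remark \ref{rem:Mmoment} provides $\EE M_\infty^{\beta+\delta} < \infty$, whence by Markov's inequality $\eps^{-\beta}\PP[M_\infty \geq x/\eps] \leq \eps^\delta x^{-\beta-\delta}\EE M_\infty^{\beta+\delta} \to 0$, while monotone convergence yields $\EE M_\infty^\beta \1_{M_\infty < x/\eps} \uparrow \EE M_\infty^\beta$. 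Together these produce the claimed limit $1 - \exp(-\EE M_\infty^\beta x^{-\beta})$.

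The main obstacle will be making the coupling rigorous: verifying that on $A_n$ the actual family $(Y^j \1_{\bar{D}_{j,n}})_{j \in I_{n,\eps}}$ has the same joint law as the independent-copy family $(Y^{j,(\xi_j)} \1_{\bar{D}_{j,n}})_{j \in I_{n,\eps}}$ used in Proposition \ref{prop:largetr}, and that the extinction events for the actual processes can be identified with $\bar{D}_{j,n}$. The Poisson-type separation of large blocks by $\sqrt{n}$ block widths is what makes this possible, but the identification of the two extinction notions (that of the actual process and that defined through the independent copy) requires some careful bookkeeping.
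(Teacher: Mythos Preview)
Your proposal is correct and follows essentially the same route as the paper: split into small and large blocks via $I_{n,\eps}$, handle the small-block part by Proposition~\ref{prop:smalltr}, use a separation event $A_n$ together with the extinction events to reduce the large-block part to independent maxima and invoke Proposition~\ref{prop:largetr}, then send $\eps\downarrow 0$ via Remark~\ref{rem:Mmoment} and finally $\bar\eps\downarrow 0$. The paper takes the separation to be $2\sqrt{n}$ rather than $\sqrt{n}$ and defines the extinction events $D_{k,n}$ directly for the actual processes $Y^k$ (showing $\PP[\bigcup_k D_{k,n}^c]\to 0$ via Lemma~\ref{lem:tau}) before identifying the large-block maximum with $\max_{k\in I_{n,\eps}} M^k_{\xi_k}\1_{\bar D_{k,n}}$; the coupling you flag as the main obstacle is asserted rather than spelled out in the paper, and your observation that it rests on disjointness of the environment windows (together with the independence of $\xi$ and $\rho$ in assumption~$(B)$) is exactly what makes it work.
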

\begin{proof}
	Fix $\eps > 0$. For any $\bar{\eps} > 0$,
	\begin{multline}\label{eq:trsplit}
		\PP\left[ \max_{j \geq 1} \sum_{k \in I_{n,\eps}} (Y_{j-S_{k-1}}(k) + Y_{j-S_{k-1} + 1}(k)) > x a_n \right]
		\leq \PP\left[\max_{j < S_n} (Z_j + Z_{j+1}) > x a_n \right] \\
		\leq \PP\left[2 \max_{j \geq 1} \sum_{k\in I_{n,\eps}^c} Y_{j-S_{k-1}}(k) > \bar{\eps} a_n \right] \\
		+ \PP\left[ \max_{j \geq 1} \sum_{k \in I_{n,\eps}} (Y_{j-S_{k-1}}(k) + Y_{j-S_{k-1} + 1}(k)) > (x-\bar{\eps})a_n \right].
	\end{multline}
	
	Note that because of \eqref{eq:PPP} we expect that for large $n$ the set $I_{n,\eps}$ should be distributed rather uniformly on $\{1,\dots n\}$, so that the large blocks are far from each other. Indeed, since $n\P[\xi > \eps a_n] \to \eps^{-\beta}$, for any sequence $b_n$ such that $b_n = o(n)$,
	\begin{equation*}
		\P\left[ \left(\exists k,l \in I_{n,\eps} \right) \, k\neq l, |k-l|\leq b_n \right] \leq n\P[\xi > \eps a_n] \cdot b_n \P[\xi > \eps a_n] \to 0 \quad \textnormal{ as } n\to\infty.
	\end{equation*}
	That is, with high probability, large blocks are at distance at least $b_n$ from each other. On the other hand, we know that extinction occurs very often in our process, which should mean that as the process evolves, no two bloodlines of immigrants from large blocks coexist at one time. Let
	\begin{equation*}
		D_{k,n} = \left\{ \Y_{\sqrt{n}}(k) = 0 \right\}
	\end{equation*}
	be an event that the progeny of immigrants from the $k$'th block does not survive more than $\sqrt{n}$ blocks. Then, by Lemma~\ref{lem:tau},
	\begin{equation}\label{eq:Ydeathest}
		\PP\left[ \bigcup_{k\leq n} D_{k,n}^c \right] 
		\leq n \PP[\tau_1 > \sqrt{n}] \leq n e^{-c\sqrt{n}} \EE e^{c\tau_1} \to 0
	\end{equation}
	as $n \to 0$. Therefore the probability of the set
	\begin{equation*}
		D_n = \{ (\forall k\in I_{n,\eps}) \,\Y_{\sqrt{n}}(k) = 0 \}
	\end{equation*}
	converges to $1$ as $n \to \infty$ and so does the probability of
	\begin{equation*}
		A_n = \{ \left( \forall k,l \in I_{n,\eps} \right) k\neq l \implies |k-l| > 2\sqrt{n}\}.
	\end{equation*}
	Moreover, on the set $A_n \cap D_n$, the progeny of immigrants from each large block dies out before the next large block occurs. That is, $\max_{j \geq 1} \sum_{k \in I_{n,\eps}} (Y_{j-S_{k-1}}(k) + Y_{j-S_{k-1} + 1}(k))$ is a~maximum of independent maxima of $Y(k)$ such that $k \in I_{n,\eps}$. In particular,
	\begin{multline}\label{eq:Ytoiid}
		\PP\left[ \max_{j \geq 1} 	\sum_{k \in I_{n,\eps}} (Y_{j-S_k}(k) + Y_{j-S_k + 1}(k)) > x a_n, \, A_n\cap D_n \right] \\
		= \PP\left[ \max_{k \in I_{n,\eps}} M_{\xi_k}(k) > x a_n , \, A_n \cap \bar{D}_{n}\right]
	\end{multline}
	for the variables $M_{\xi_k}(k)$ defined in \eqref{def:Mcoup} and
	\begin{equation*}
		\bar{D}_n = \left\{ (\forall k \in I_{n,\eps}) \, \Y_{\sqrt{n}}^{(\xi_k)}(k) = 0 \right\}.
	\end{equation*}
	Note that an estimate identical to \eqref{eq:Ydeathest} implies that $\PP[\bar{D}_n]\to 1$ as $n\to\infty$. Therefore, \eqref{eq:Ytoiid} and Proposition \ref{prop:largetr} give, for any $x > 0$,
	\begin{multline*}
		\lim_{n\to\infty} \PP\left[ \max_{j \geq 1} 	\sum_{k \in I_{n,\eps}} (Y_{j-S_k}(k) + Y_{j-S_k + 1}(k)) > x a_n\right]
		= \lim_{n\to\infty}\PP\left[ \max_{k \in I_{n,\eps}} M_{\xi_k}(k) > x a_n\right]\\
		= 1 - \exp\left(-x^{-\beta}\EE M_\infty^\beta \1_{M_\infty < x/\eps} - \eps^{-\beta}\PP[M_\infty \geq x/\eps]\right).
	\end{multline*}
	
	Going back to \eqref{eq:trsplit}, we have
	\begin{multline}\label{eq:liminf}
		1 - \exp\left(-x^{-\beta}\EE M_\infty^\beta \1_{M_\infty < x/\eps} - \eps^{-\beta}\PP[M_\infty \geq x/\eps]\right) \\ \leq \liminf_{n\to\infty} \PP\left[\max_{j < S_n} (Z_j + Z_{j+1}) > x a_n \right].
	\end{multline}
	On the other hand, by Proposition \ref{prop:smalltr},
	\begin{equation*}
		\limsup_{n\to\infty} \PP\left[2 \max_{j \geq 1} \sum_{k\in I_{n,\eps}^c} Y_{j-S_{k-1}}(k) > \bar{\eps} a_n \right] \leq C_5 (\bar{\eps}/2)^{-\beta-\delta} \eps^\delta,
	\end{equation*}
	which means that
	\begin{equation}\label{eq:limsup}
		\begin{split}
			\limsup_{n\to\infty} \PP&\left[\max_{j < S_n} (Z_j + Z_{j+1}) > x a_n \right]
			\leq C_5 (\bar{\eps}/2)^{-\beta-\delta} \eps^\delta \\
			& + 1 - \exp\left(-(x-\bar{\eps})^{-\beta}\EE M_\infty^\beta \1_{M_\infty < (x-\bar{\eps})/\eps} - \eps^{-\beta}\PP[M_\infty \geq (x-\bar{\eps})/\eps]\right).
		\end{split}
	\end{equation}
	Observe that, since $\EE M_\infty^{\beta + \delta} < \infty$ (see Remark \ref{rem:Mmoment}), we have
	\begin{equation*}
		\eps^{-\beta}\PP[M_\infty \geq x/\eps] \leq \eps^\delta x^{-\beta-\delta} \EE M_\infty^{\beta+\delta} \to 0 \quad \textnormal{ as } \eps \to 0,
	\end{equation*}
	while by the monotone convergence theorem,
	\begin{equation*}
		\EE M_\infty^{\beta} \1_{M_\infty < x/\eps} \to \EE M_\infty^\beta \quad \textnormal{ as } \eps \to 0.
	\end{equation*}
	Therefore taking $\eps$ to $0$ in \eqref{eq:liminf} gives
	\begin{equation*}
		1 - \exp\left(-x^{-\beta}\EE M_\infty^\beta \right) \leq \liminf_{n\to\infty} \PP\left[\max_{j < S_n} (Z_j + Z_{j+1}) > x a_n \right],
	\end{equation*}
	and similarly in \eqref{eq:limsup},
	\begin{equation*}
		\limsup_{n\to\infty} \PP\left[\max_{j < S_n} (Z_j + Z_{j+1}) > x a_n \right]
		\leq 1 - \exp\left(-(x-\bar{\eps})^{-\beta}\EE M_\infty^\beta\right),
	\end{equation*}
	which ends the proof since $\bar{\eps}>0$ is arbitrary.
\end{proof}

\section*{Acknowledgements}
The research was supported by the National Science Center, Poland (Opus, grant number 2020/39/B/ST1/00209).


\end{document}